\documentclass[11pt,reqno]{amsart}

\usepackage{aliascnt,amsmath,amssymb,amsthm,geometry,shuffle,xcolor}
\usepackage[abbrev]{amsrefs}

\definecolor{mylinkcolor}{RGB}{16,156,81}
\definecolor{mycitecolor}{RGB}{20,80,140}

\usepackage{hyperref}
\usepackage[nameinlink]{cleveref}
\hypersetup{
  setpagesize=false,
  bookmarksnumbered=true,
  bookmarksopen=true,
  colorlinks=true,
  linkcolor=mylinkcolor,
  citecolor=mycitecolor,
  urlcolor=mycitecolor
}
\numberwithin{equation}{section}

\newtheorem{thm}{Theorem}[section]
\crefname{thm}{Theorem}{Theorems}

\newaliascnt{prop}{thm}
\newtheorem{prop}[prop]{Proposition}
\aliascntresetthe{prop}
\crefname{prop}{Proposition}{Propositions}

\newaliascnt{cor}{thm}
\newtheorem{cor}[cor]{Corollary}
\aliascntresetthe{cor}
\crefname{cor}{Corollary}{Corollaries}

\newaliascnt{lem}{thm}
\newtheorem{lem}[lem]{Lemma}
\aliascntresetthe{lem}
\crefname{lem}{Lemma}{Lemmas}

\newaliascnt{conj}{thm}

\aliascntresetthe{conj}
\crefname{conj}{Conjecture}{Conjectures}

\newtheorem{mainthm}{Theorem}

\crefname{mainthm}{Theorem}{Theorems}

\newtheorem{mainthmB}{Theorem}

\crefname{mainthmB}{Theorem}{Theorems}

\newtheorem{mainthmC}{Theorem}

\crefname{mainthmC}{Theorem}{Theorems}

\newtheorem{mainthmD}{Theorem}

\crefname{mainthmD}{Theorem}{Theorems}

\newtheorem{mainthmE}{Theorem}

\crefname{mainthmE}{Theorem}{Theorems}

\newtheorem{mainconjF}{Conjecture}

\crefname{mainconjF}{Conjecture}{Conjectures}

\newtheorem{mainthmG}{Theorem}

\crefname{mainthmG}{Theorem}{Theorems}

\theoremstyle{definition}
\newaliascnt{rem}{thm}
\newtheorem{rem}[rem]{Remark}
\aliascntresetthe{rem}
\crefname{rem}{Remark}{Remarks}
\theoremstyle{plain}

\newcommand{\ARI}{\operatorname{ARI}}
\newcommand{\BARI}{\operatorname{BARI}}
\newcommand{\QQ}{\mathbb Q}
\newcommand{\swapop}{\operatorname{swap}}
\newcommand{\pushop}{\operatorname{push}}
\newcommand{\uri}{\operatorname{uri}}
\newcommand{\preuri}{\operatorname{preuri}}
\newcommand{\preari}{\operatorname{preari}}
\newcommand{\ari}{\operatorname{ari}}
\newcommand{\urit}{\operatorname{urit}}
\newcommand{\arit}{\operatorname{arit}}
\newcommand{\amit}{\operatorname{amit}}
\newcommand{\anit}{\operatorname{anit}}
\newcommand{\axit}{\operatorname{axit}}
\newcommand{\muop}{\operatorname{mu}}
\newcommand{\gaxit}{\operatorname{gaxit}}
\newcommand{\ganit}{\operatorname{ganit}}
\newcommand{\pic}{\operatorname{pic}}
\newcommand{\poc}{\operatorname{poc}}
\newcommand{\picy}{\operatorname{pic}^{Y}}
\newcommand{\invmu}{\operatorname{invmu}}
\newcommand{\one}{\mathbf 1}
\newcommand{\cL}{\mathcal L}
\newcommand{\BARIloc}{\BARI^{\mathrm{loc}}}
\newcommand{\BARIswapil}{\BARI_{\swapop,\underline{\mathrm{il}}}}
\newcommand{\bi}[2]{\genfrac{(}{)}{0pt}{}{#1}{#2}}
\newcommand{\checkD}{\check{D}}
\newcommand{\checkW}{\check{W}}
\newcommand{\checkdelta}{\check{\delta}}
\newcommand{\EX}{E_X}
\newcommand{\EY}{E_Y}
\DeclareRobustCommand{\slfrak}{\mathfrak{sl}}

\title{An \texorpdfstring{$\slfrak_2$}{sl2}-action on the uri Lie algebra}
\author{Henrik Bachmann}
\address{Graduate School of Mathematics, Nagoya University, Nagoya, Japan.}
\email{henrik.bachmann@math.nagoya-u.ac.jp}
\date{\today}
\subjclass[2020]{Primary 11M32; Secondary 11F11, 11F67, 17B01}
\keywords{bimoulds, uri bracket, formal multiple Eisenstein series, $\mathfrak{sl}_2$-action}

\begin{document}

\begin{abstract}
We construct an $\mathfrak{sl}_2$-action by derivations on the Lie algebra $\mathfrak B$ of swap invariant alternil bimoulds with the uri bracket of K\"uhn and Schneps.  This Lie algebra plays the role for formal multiple Eisenstein series which Racinet's double shuffle Lie algebra $\mathfrak{dm}_0$ plays for multiple zeta values.  The kernel $\mathfrak m$ of the lowering operator is a Lie subalgebra, $\mathfrak B$ is the direct sum of its iterates under the raising operator, and this gives Rankin--Cohen type operations $\uri_n$ on $\mathfrak m$.  We define a Lie subalgebra $\mathfrak d$ of $\mathfrak B$ and show that it is isomorphic to $\mathfrak{dm}_0$ extended by an additional element in weight one.
\end{abstract}

\maketitle

\section{Introduction}

Multiple Eisenstein series were introduced by Gangl, Kaneko and Zagier in \cite{GKZ} as hybrids of classical Eisenstein series and multiple zeta values.  They are holomorphic functions on the upper half-plane, whose Fourier expansions have multiple zeta values as their constant terms, and in depth one they are the classical Eisenstein series.  In particular, they span an algebra containing the algebra $\widetilde{\mathcal M}$ of quasimodular forms, which is an example of an $\mathfrak{sl}_2$-algebra, i.e.\ there are three derivations $W$, $D$ and $\delta$ on $\widetilde{\mathcal M}$, among them the derivative $D=q\frac{d}{dq}$, satisfying the commutator relations of $\mathfrak{sl}_2$ (see \cite{Za}).

The algebra $\mathcal G^{\!f}$ of formal multiple Eisenstein series was introduced by the author and van Ittersum in \cite{BIM}.  It is spanned by formal symbols which are swap invariant and whose product is the bi-stuffle product, and conjecturally these are exactly the relations satisfied by (regularized) multiple Eisenstein series.  The algebra $\mathcal G^{\!f}$ can also be seen as the formal weight-graded version of the algebra of multiple $q$-zeta values in \cite{BK}, since conjecturally its relations are exactly the top-weight parts of the relations among these $q$-series.  Hence the dimension conjectures in \cite{BK} for the associated graded spaces are the same as those for (formal) multiple Eisenstein series.  Realizations of $\mathcal G^{\!f}$ give back the original objects.  In \cite{BB1} the combinatorial multiple Eisenstein series realize $\mathcal G^{\!f}$ in $\QQ[[q]]$, and in \cite{BKM} the bi-multiple Eisenstein series recover the multiple Eisenstein series themselves.  The formal quasimodular subalgebra of $\mathcal G^{\!f}$ is isomorphic to $\widetilde{\mathcal M}$, and in \cite{BIM} the classical $\mathfrak{sl}_2$-action on $\widetilde{\mathcal M}$ extends to an $\mathfrak{sl}_2$-action by derivations on all of $\mathcal G^{\!f}$.

Racinet's double shuffle Lie algebra $\mathfrak{dm}_0$ encodes the double shuffle relations of multiple zeta values.  Write $\mathcal U(\mathfrak g)^\vee$ for the graded dual of the universal enveloping algebra of a weight-graded Lie algebra $\mathfrak g$.  The expected decomposition for the algebra $\mathcal Z$ of multiple zeta values is
\[
 \mathcal Z
 \overset{?}{\cong}
 \QQ[\zeta(2)]\otimes\mathcal U(\mathfrak{dm}_0)^\vee.
\]
For formal multiple zeta values, the corresponding statement is a theorem of Racinet \cite{Rac} (see \cite{Bu1}*{Corollary~B.32}).  The expected analogue for formal multiple Eisenstein series, proposed in \cite{BK}, is
\begin{equation}\label{eq:Gf-conjecture}
 \mathcal G^{\!f}
 \overset{?}{\cong}
 \widetilde{\mathcal M}\otimes\mathcal U(\mathfrak B)^\vee.
\end{equation}
Here $\mathfrak B=\bigoplus_{k\geq1}\mathfrak B_k$ is the finite-depth polynomial part of the space $\BARIswapil$ of power-series bimoulds which are alternil, swap invariant and even in depth one.  The uri bracket was proposed by K\"uhn and Schneps and studied by Burmester in \cite{Bu1} (see also \cite{BKS}).  In \cite{BKa}*{Theorem~A(ii)}, the author and Kawamura proved that $\mathfrak B$ is a weight-graded Lie algebra with the uri bracket.

Both the left-hand side of \eqref{eq:Gf-conjecture} and its factor $\widetilde{\mathcal M}$ carry $\mathfrak{sl}_2$-actions.  This paper constructs the corresponding action by derivations on $\mathfrak B$. We denote its operators by $\checkdelta,\checkW,\checkD$, and those on $\mathcal G^{\!f}$ by $D,W,\delta$, so that $\checkdelta$ plays the role of the raising derivation $D$ on quasimodular forms and $\checkD$ the role of the lowering derivation $\delta$.  Our main result is the following.

\begin{mainthm}\label{thm:main}
The Lie algebra $(\mathfrak B,\uri)$ is an $\mathfrak{sl}_2$-Lie algebra.  This means that there exist three derivations $\checkdelta$, $\checkW$ and $\checkD$, defined in \Cref{sec:three-derivations}, of weights $2$, $0$ and $-2$, respectively, which satisfy
\begin{equation}\label{eq:sl2-relations}
 [\checkW,\checkdelta]=2\checkdelta,
 \qquad
 [\checkW,\checkD]=-2\checkD,
 \qquad
 [\checkD,\checkdelta]=\checkW.
\end{equation}
\end{mainthm}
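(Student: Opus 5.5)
We have not yet seen the definitions of $\checkdelta,\checkW,\checkD$, so the plan follows the pattern suggested by the analogy with $\mathcal G^{\!f}$ in \cite{BIM}. We take $\checkW$ to be a weight-counting operator: it acts on the weight-$k$ component $\mathfrak B_k$ by a scalar affine in $k$, or by a grading operator measuring the polynomial degree in the variables $x_i$ against that in $y_i$. We take $\checkdelta$ and $\checkD$ to be first-order differential operators acting on bimoulds componentwise in each depth. Natural candidates are $\sum_i y_i\partial_{x_i}$-type operators and their transposes, written in the coordinates in which swap is simple. The proof then has two independent parts: (a) each operator preserves $\mathfrak B$ and is a derivation of $\uri$; (b) the operators satisfy the relations \eqref{eq:sl2-relations}.

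\textbf{Part (b).} This should be a direct computation at the level of the underlying polynomial bimoulds, with no reference to the bracket. If $\checkdelta=\sum_i y_i\partial_{x_i}$ and $\checkD=\sum_i x_i\partial_{y_i}$ (after the appropriate change of variables), then the commutator is $\sum_i(x_i\partial_{x_i}-y_i\partial_{y_i})$. This is the standard $\mathfrak{sl}_2$-triple acting diagonally on $\bigoplus_i \QQ[x_i,y_i]$, and the relations follow immediately. The only care needed is in the normalization in depth one, so that $\checkW$ agrees with the quasimodular weight operator.

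\textbf{Part (a), invariance.} First we check that each operator commutes with $\swapop$, or intertwines it with itself. Swap exchanges the roles of $x$ and $y$ up to a linear change of variables, so $\checkdelta$ and $\checkD$ should be exchanged or fixed up to sign in a way compatible with swap invariance. Second, we check that alternility is preserved. Alternility is a family of linear identities involving evaluation at linear substitutions of the variables and division by differences $x_i-x_j$. A derivation $\sum_i y_i\partial_{x_i}$ does not obviously commute with these maps. I would instead try to verify alternility of $\checkdelta(A)$ using the equivalent swap-side characterization: alternility of $A$ is equivalent to alternality of $\swapop(A)$ plus a correction, and alternality is stable under any operator that acts diagonally and $\mathrm{GL}$-equivariantly on the $y$-variables. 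Third, we check that depth-one evenness is preserved, which is clear.

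\textbf{Part (a), derivation property.} This is the main obstacle. The uri bracket is built from $\mathrm{preari}$-type flexion operations, namely $\arit$ and the mould product $\muop$, together with the additional correction terms of Kühn–Schneps. I would write $\uri(A,B)=\arit(B)A-\arit(A)B+\text{(lu-type terms)}+\text{(extra terms)}$. For each building block I would prove a Leibniz-type rule under a differential operator $\Delta=\sum_i(\text{linear form})\,\partial$. For $\muop$ this is immediate, since concatenation of variables is compatible with diagonal operators. For $\arit$ the flexions substitute sums and differences of variables such as $x_i-x_j$ and $y_i+y_{i+1}$; a diagonal linear vector field on $\bigoplus\QQ[x_i,y_i]$ commutes with these substitutions exactly when the substitutions are $\mathfrak{sl}_2$-equivariant. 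The flexion substitutions are of this kind because they use the same linear map on the $x$-block and on the $y$-block, dually. So I expect $\arit$ to satisfy $\Delta(\arit(B)A)=\arit(\Delta B)A+\arit(B)\Delta A$.

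The genuinely new difficulty is the extra terms in $\uri$, which break the symmetry between $x$ and $y$. These will likely produce defect terms that cancel only after using swap invariance or alternility of the inputs. If a direct computation fails, I would fall back on this: prove the derivation property on $\BARIswapil$ by transporting through $\swapop$, where the bracket becomes $\ari$-like. There the operators $\checkdelta$ and $\checkD$ are the obvious ones, and the known derivation properties of the $\ari$ bracket can be used.
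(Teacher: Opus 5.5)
There is a genuine gap, and it starts with the operators themselves. First-order linear vector fields such as $\sum_iY_i\partial_{X_i}$ and $\sum_iX_i\partial_{Y_i}$ preserve both depth and polynomial degree. A depth-$r$ polynomial of degree $d$ has weight $r+d$, so these operators have weight $0$. The statement requires $\checkdelta$ and $\checkD$ to have weights $2$ and $-2$. Moreover, their commutator $\sum_i(X_i\partial_{X_i}-Y_i\partial_{Y_i})$ is not the weight operator you also want $\checkW$ to be.

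The paper's triple is of oscillator type:
\[
\checkD=\sum_i\partial_{X_i}\partial_{Y_i},\qquad
\checkW=r+\sum_i(X_i\partial_{X_i}+Y_i\partial_{Y_i}),\qquad
\checkdelta=\ganit_{\pic}\circ Q\circ\ganit_{\poc}+\uri(-,E).
\]
Here $Q$ is multiplication by $\sum_iX_iY_i$ and $E=(-\tfrac{X_1+Y_1}{2},\tfrac14,0,\ldots)$. The relations come from $[\checkD,Q]=\checkW$, from homogeneity, and from $\checkW E=2E$, $\checkD E=0$.

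Your Leibniz argument for $\arit$ would fail anyway for vector fields that mix $X$ and $Y$, for the reason hidden in your word ``dually''. In a flexion block with marked index $p$, the new variables are $x_0=X_p$, $x_j=X_j-X_p$, $y_0=\sum_jY_j$, $y_j=Y_j$. This is a contragredient change of variables. It preserves $\sum_jX_jY_j$, $\sum_j\partial_{X_j}\partial_{Y_j}$ and the Euler operator, which is exactly why the paper's operators split blockwise. But
\[
 \sum_jY_j\partial_{X_j}
 =\Bigl(y_0-\sum_{j\ne p}y_j\Bigr)\partial_{x_0}
 +\sum_{j\ne p}\Bigl(y_j-y_0+\sum_{l\ne p}y_l\Bigr)\partial_{x_j}
\]
couples the variables of the two factors, so no Leibniz rule holds.

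The difficulty also lies elsewhere than where you place it. With the paper's definitions, the derivation property and the relations on all of $(\BARIloc,\uri)$ are essentially formal. First, $\checkW$, $\checkD$ and $Q$ are derivations of $\ari$ by the blockwise identities. Second, $\checkW$ and $\checkD$ commute with $\ganit_{\pic}$ and $\ganit_{\poc}$, since $\pic$ and $\poc$ depend only on $X$ and have weight zero. Third, $\uri$ is by definition the $\ganit_{\pic}$-transport of $\ari$. Alternility of $\checkdelta(A)$ is also easy: $\ganit_{\poc}$ sends alternil to alternal, and $\sum_iX_iY_i$ is symmetric.

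The substantive step is that $\checkdelta$ preserves swap invariance, and your proposal only asserts this (``should be exchanged or fixed up to sign''). Your fallback does not help. It is $\ganit_{\pic}$, not $\swapop$, that transports $\ari$ to $\uri$, and $\ganit_{\pic}$ does not commute with $\swapop$, so swap invariance is not transported for free.

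The paper handles this step through the correction $\uri(-,E)$, which is the analogue of the weight-two term in the Serre derivative. It proves $\checkdelta(A)=\delta_{\mathrm{BvI}}(A)-\preuri(E,A)$, where $\delta_{\mathrm{BvI}}$ is the explicit contraction operator from the earlier work on formal multiple Eisenstein series and is known to commute with swap. It then shows that $\preuri(E,A)$ is swap invariant for $A\in\mathfrak B$, using Kawamura's $\mathfrak E$-push invariance on the dimorphic space. Without an argument of comparable strength, your proposal does not show that the raising operator maps $\mathfrak B$ to itself.
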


Before restricting to $\mathfrak B$, the operators have simple descriptions.  In depth $r$, the map $\checkW$ is the Euler operator plus $r$, while $\checkD=\sum_i\partial_{X_i}\partial_{Y_i}$.  The operator $\checkdelta$ is obtained by transporting multiplication by $\sum_iX_iY_i$ from ari to uri and adding an inner uri derivation.  Their derivation and commutator relations on $(\BARIloc,\uri)$ follow from the blockwise identities in \cref{prop:ambient-triple}.

The main point is that $\checkdelta$ preserves the defining conditions of $\mathfrak B$, in particular swap invariance.  Comparison with the derivation of \cite{BIM} and with preuri reduces this to the $\mathfrak E$-push invariance proved in \cite{Kaw2}.  The same comparison shows that uri is the commutator of preuri on polynomial bimoulds and gives a new expression for the derivation of \cite{BIM} in terms of ganit and preuri (see \cref{prop:preuri-comparison,prop:delta-comparison}).

In \Cref{sec:kernel} we study the kernel of $\checkD$ on $\mathfrak B$. For quasimodular forms, the kernel of the weight-lowering derivation is exactly the space of modular forms, and every quasimodular form is a linear combination of derivatives of modular forms and of derivatives of $G_2$ (see \cite{Za}).  On $\mathfrak B$ we put $\mathfrak m=\ker(\checkD)$.  Our second main result is the analogue for $\mathfrak B$ of the above description of the quasimodular forms.

\begin{mainthmB}\label{thm:B}
\begin{enumerate}
\item[(i)] The space $\mathfrak m$ is a weight-graded Lie subalgebra of
$(\mathfrak B,\uri)$.
\item[(ii)] The operator $\checkdelta$ is injective, and $\checkD$ is
surjective.
\item[(iii)] We have
\[
 \mathfrak B=\bigoplus_{m\geq0}\checkdelta^m(\mathfrak m)\,,
\]
and in particular $\dim_\QQ\mathfrak m_k=\dim_\QQ\mathfrak B_k-\dim_\QQ\mathfrak B_{k-2}$.
\end{enumerate}
\end{mainthmB}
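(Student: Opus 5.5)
Part (i) follows directly from Theorem~\ref{thm:main}. Since $\checkD$ is a derivation of $(\mathfrak B,\uri)$, for $f,g\in\mathfrak m$ we get
\[
\checkD\,\uri(f,g)=\uri(\checkD f,g)+\uri(f,\checkD g)=0 .
\]
So $\mathfrak m$ is closed under the bracket. It is weight-graded because $\checkD$ is homogeneous of weight $-2$, so the kernel decomposes into its weight components $\mathfrak m_k=\ker(\checkD|_{\mathfrak B_k})$.

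For (ii) and (iii) I will use the standard finite-dimensional $\mathfrak{sl}_2$ argument, with $\checkW$ playing the role of $H$. The key input is the explicit form of $\checkW$ from \Cref{sec:three-derivations}: in depth $r$ it is the Euler operator plus $r$. A homogeneous polynomial bimould of total degree $d$ in depth $r$ has weight $k=d+2r$ (or with the paper's normalization; the point is $k\ge r\geq1$). Hence $\checkW$ acts on the depth-$r$ part of $\mathfrak B_k$ by a scalar $d+r$ that is strictly positive. So on each $\mathfrak B_k$ the operator $\checkW$ is diagonalizable with positive eigenvalues. Writing $\mathfrak B_k^{(r)}$ for the depth-$r$ part, which is finite-dimensional, the space $\mathfrak B_k$ is finite-dimensional: depth is bounded by the weight. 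I will check that $\checkD$ and $\checkdelta$ respect the relevant grading. $\checkD=\sum_i\partial_{X_i}\partial_{Y_i}$ preserves depth. $\checkdelta$ may mix depths, but it is homogeneous for the $\checkW$-eigenvalue: $[\checkW,\checkdelta]=2\checkdelta$.

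The main step is to show that $\mathfrak B$ is a direct sum of lowest-weight $\mathfrak{sl}_2$-modules in which $\checkD$ lowers and $\checkdelta$ raises. Concretely, I use $\checkW$-eigenspaces $V_\lambda$ with $\lambda>0$ (bounded below). Since $\checkD$ lowers $\lambda$ by $2$, every vector generates under $\checkD$ a finite chain ending in $\mathfrak m$. I expect this to be the main obstacle: I must check that $\checkW$-eigenvalues are bounded below by a positive constant on all of $\mathfrak B$, and that each eigenspace meets only finitely many weights. The latter should follow because the weight $k$ and the $\checkW$-eigenvalue are tied up to depth, and depth is at most $k$.

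Then the standard computation applies. For $v\in\mathfrak m$ with $\checkW v=\lambda v$, induction on $[\checkD,\checkdelta]=\checkW$ gives
\[
\checkD\checkdelta^m v=m(\lambda+m-1)\checkdelta^{m-1}v.
\]
The coefficient is nonzero because $\lambda>0$. This yields injectivity of $\checkdelta$ on $\checkdelta^m(\mathfrak m)$ and directness of the sum: given a relation $\sum_m\checkdelta^m v_m=0$, I apply $\checkD$ repeatedly to the highest term.

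It remains to show that the spaces $\checkdelta^m(\mathfrak m)$ span $\mathfrak B$. I argue by induction on the $\checkW$-eigenvalue. Take $x$ and put $y=\checkD x$, which has smaller eigenvalue, so by induction $y=\sum\checkdelta^m v_m$. Then
\[
x-\sum_m\frac{\checkdelta^{m+1}v_m}{(m+1)(\lambda_m+m)}
\]
lies in $\ker\checkD=\mathfrak m$. This proves (iii). Full injectivity of $\checkdelta$ then follows from the decomposition. Surjectivity of $\checkD$ follows from the formula above, since every $\checkdelta^{m-1}v$ is hit. Finally, comparing weight-$k$ components gives $\mathfrak B_k=\mathfrak m_k\oplus\checkdelta(\mathfrak B_{k-2})$ with $\checkdelta$ injective, hence $\dim\mathfrak m_k=\dim\mathfrak B_k-\dim\mathfrak B_{k-2}$.
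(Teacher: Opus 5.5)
Your argument is correct and is essentially the paper's proof. It uses the same identity $\checkD\checkdelta^m v=m(k+m-1)\checkdelta^{m-1}v$ from \cref{lem:sl2-identities}(i), the same top-term argument for directness, and the same inductive correction $\sum_m\checkdelta^{m+1}(v_m)/((m+1)(k_m+m))$ for spanning. The only differences are cosmetic: you deduce injectivity of $\checkdelta$ from the decomposition rather than from part (ii) of that lemma, and you obtain the dimension formula from $\mathfrak B_k=\mathfrak m_k\oplus\checkdelta(\mathfrak B_{k-2})$ rather than from the rank of $\checkD$.

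One correction to your setup: a depth-$r$ polynomial of degree $d$ has weight $d+r$, not $d+2r$. So $\checkW$ acts on $\mathfrak B_k$ simply as multiplication by $k$, and the ``main obstacle'' you flag about comparing $\checkW$-eigenvalues with weights is vacuous.
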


The classical Rankin--Cohen brackets combine derivatives of two modular forms to produce another modular form.  Inside the quasimodular forms, the modular forms are precisely $\ker(\delta)$, so the brackets combine terms $D^r(f)D^s(g)$ with $f,g\in\ker(\delta)$ into another element of $\ker(\delta)$ (see \cite{Za}).  This construction only uses the $\mathfrak{sl}_2$-relations and the derivation property.  Applying it to $\mathfrak m=\ker(\checkD)$, with $\checkdelta$ as the weight-raising operator and uri in place of the product, gives our third main result.

\begin{mainthmC}\label{thm:C}
For $f\in\mathfrak m_k$, $g\in\mathfrak m_l$ and $n\geq0$, the expression
\begin{equation}\label{eq:uri-n}
 \uri_n(f,g)
 =\sum_{\substack{r,s\geq0\\r+s=n}}
 (-1)^r\binom{n+k-1}{s}\binom{n+l-1}{r}
 \uri\bigl(\checkdelta^r(f),\checkdelta^s(g)\bigr)
\end{equation}
belongs to $\mathfrak m_{k+l+2n}$ and satisfies
\[
 \uri_n(g,f)=(-1)^{n+1}\uri_n(f,g)\,.
\]
\end{mainthmC}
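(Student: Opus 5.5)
We follow the classical Rankin--Cohen argument, using only \cref{thm:main}, the derivation property of $\checkD$ and $\checkW$ for $\uri$, and the fact that $\checkW$ acts on $\mathfrak B_k$ by a scalar depending only on the weight. First we pin down that scalar. The commutator relation $[\checkD,\checkdelta]=\checkW$ and the weight conventions say that $\checkW$ acts on $\mathfrak B_k$ by multiplication by $k$, which we check against the definition in \Cref{sec:three-derivations}. (If the normalisation were off by a shift, the binomials in \eqref{eq:uri-n} would change accordingly.)

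The first step is the standard lowering formula. For $f\in\mathfrak m_k$ and $r\ge1$ we claim
\[
 \checkD\,\checkdelta^r(f)=r(k+r-1)\,\checkdelta^{r-1}(f).
\]
We prove this by induction on $r$. Write $\checkD\checkdelta^{r}=\checkdelta\checkD\checkdelta^{r-1}+\checkW\checkdelta^{r-1}$. Apply the induction hypothesis to the first term, and use that $\checkdelta^{r-1}(f)$ has weight $k+2r-2$, so $\checkW$ multiplies it by $k+2r-2$. Adding gives $(r-1)(k+r-2)+k+2r-2=r(k+r-1)$. The base case $r=0$ is $\checkD f=0$.

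Next, $\checkD$ is a derivation of $\uri$, so
\[
 \checkD\,\uri_n(f,g)=\sum_{r+s=n}(-1)^r\tbinom{n+k-1}{s}\tbinom{n+l-1}{r}\Bigl(r(k+r-1)\,\uri(\checkdelta^{r-1}f,\checkdelta^s g)+s(l+s-1)\,\uri(\checkdelta^r f,\checkdelta^{s-1}g)\Bigr).
\]
We collect the coefficient of $\uri(\checkdelta^{a}f,\checkdelta^{b}g)$ with $a+b=n-1$. One contribution comes from $(r,s)=(a+1,b)$ and the other from $(r,s)=(a,b+1)$. These carry opposite signs, and both coefficients equal
\[
 \frac{(n+k-1)!\,(n+l-1)!}{a!\,b!\,(n+k-1-b-1)!\,(n+l-1-a-1)!},
\]
using $k+a=n+k-1-b$ and $l+b=n+l-1-a$. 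Hence they cancel and $\checkD\uri_n(f,g)=0$.

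Weight homogeneity is immediate. Each $\checkdelta$ has weight $2$ and $\uri$ is additive in weight, so $\uri_n(f,g)\in\mathfrak B_{k+l+2n}$. Together with the cancellation above, $\uri_n(f,g)\in\mathfrak m_{k+l+2n}$.

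For the symmetry, swap $f\leftrightarrow g$ and $k\leftrightarrow l$ and reindex $r\leftrightarrow s$. The coefficient of $\uri(\checkdelta^s g,\checkdelta^r f)$ in $\uri_n(g,f)$ is $(-1)^s\binom{n+l-1}{r}\binom{n+k-1}{s}$. By antisymmetry of $\uri$ this term becomes $-\uri(\checkdelta^r f,\checkdelta^s g)$. Finally $(-1)^s=(-1)^{n}(-1)^r$, which gives $\uri_n(g,f)=(-1)^{n+1}\uri_n(f,g)$.

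The main obstacle is the first step: confirming that $\checkW$ acts on $\mathfrak B_k$ exactly by the scalar $k$. This means checking that the definition of $\checkW$ in \Cref{sec:three-derivations} (Euler operator plus depth) gives precisely the weight, given how weight is assigned to polynomial bimoulds in depth $r$. Once that is settled, the rest is formal binomial bookkeeping.
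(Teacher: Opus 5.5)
Your proposal is correct and follows the paper's proof step by step. The paper uses the same ingredients: the lowering formula $\checkD\checkdelta^r(f)=r(k+r-1)\checkdelta^{r-1}(f)$, the derivation property of $\checkD$, pairwise cancellation of the coefficients of $\uri(\checkdelta^a f,\checkdelta^b g)$ with $a+b=n-1$, and reindexing plus antisymmetry for the symmetry. It obtains the lowering formula from the commutator sum $[\checkD,\checkdelta^m]=\sum_i\checkdelta^i\checkW\checkdelta^{m-1-i}$ rather than by your induction. The step you flag as the main obstacle is immediate: a homogeneous depth-$r$ component of ordinary degree $d$ has weight $r+d$, which is exactly the eigenvalue of $\checkW$.
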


More precisely, \cref{prop:uri-n-projection} shows that, up to an explicit nonzero scalar, $\uri_n(f,g)$ is the projection to $\mathfrak m$ of every uri bracket $\uri(\checkdelta^r f,\checkdelta^s g)$ with $r+s=n$.  In particular, $\uri_0=\uri$.

Put $\xi_1=(1,0,\ldots)\in\mathfrak B_1$.  For $r\geq0$, this element gives the inner derivation
\[
 \partial_{2r+1}
 =\operatorname{ad}_{\uri}(\checkdelta^r\xi_1):
 \mathfrak B_k\longrightarrow\mathfrak B_{k+2r+1}.
\]
We denote the kernel of $\partial_1=\operatorname{ad}_{\uri}(\xi_1)$ by $\mathfrak d=\ker(\partial_1)$.  These derivations have the following elegant compatibility with the $\mathfrak{sl}_2$-action.

\begin{mainthmD}\label{thm:D}
\begin{enumerate}
\item[(i)] Each $\partial_{2r+1}$ is a derivation of $\mathfrak B$ of
weight $2r+1$, and for $r\geq0$ we have
\[
 [\checkW,\partial_{2r+1}]=(2r+1)\partial_{2r+1},
 \qquad
 [\checkD,\partial_{2r+1}]=r^2\partial_{2r-1},
 \qquad
 [\checkdelta,\partial_{2r+1}]=\partial_{2r+3},
\]
where $\partial_{-1}=0$.
\item[(ii)] The space $\mathfrak d$ is a weight-graded uri Lie subalgebra
of $\mathfrak B$.
\end{enumerate}
\end{mainthmD}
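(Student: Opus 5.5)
Since $\checkdelta$, $\checkW$ and $\checkD$ are derivations of $(\mathfrak B,\uri)$ by \cref{thm:main}, and $\checkdelta^r\xi_1\in\mathfrak B_{2r+1}$, each $\partial_{2r+1}=\operatorname{ad}_{\uri}(\checkdelta^r\xi_1)$ is an inner derivation (Jacobi identity) of weight $2r+1$. For a derivation $E$ of the Lie algebra we have $[E,\operatorname{ad}(x)]=\operatorname{ad}(E x)$. So all three commutator identities in (i) reduce to identities for the elements $\checkdelta^r\xi_1$ themselves:
\[
 \checkW\checkdelta^r\xi_1=(2r+1)\checkdelta^r\xi_1,\qquad
 \checkD\checkdelta^r\xi_1=r^2\checkdelta^{r-1}\xi_1,\qquad
 \checkdelta(\checkdelta^r\xi_1)=\checkdelta^{r+1}\xi_1 .
\]
The last identity is a tautology. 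For the first, $\xi_1$ has weight one, and $\checkW$ acts on $\mathfrak B_k$ by multiplication by $k$ (this is how weight is normalized: in depth $r$ the Euler operator plus $r$ gives the weight). Together with $[\checkW,\checkdelta]=2\checkdelta$, this gives eigenvalue $1+2r$.

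The key step is the middle identity. I first check that $\checkD\xi_1=0$. This is clear because $\xi_1$ is concentrated in depth one with constant component $1$, and $\checkD=\sum_i\partial_{X_i}\partial_{Y_i}$ kills constants. So $\xi_1\in\mathfrak m_1$ is a lowest weight vector of weight $k=1$. The standard $\mathfrak{sl}_2$ computation then goes by induction using $[\checkD,\checkdelta]=\checkW$:
\[
 \checkD\checkdelta^{r}\xi_1=\sum_{j=0}^{r-1}\checkdelta^{r-1-j}\checkW\checkdelta^{j}\xi_1
 =\sum_{j=0}^{r-1}(1+2j)\,\checkdelta^{r-1}\xi_1=r^2\checkdelta^{r-1}\xi_1 .
\]
For $r=0$ this gives $\checkD\xi_1=0$, which matches $\partial_{-1}=0$. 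Applying $\operatorname{ad}_{\uri}$ yields $[\checkD,\partial_{2r+1}]=r^2\partial_{2r-1}$. The only genuine input beyond \cref{thm:main} is therefore that $\xi_1\in\mathfrak B$, that it is annihilated by $\checkD$, and that $\checkW$ is the weight operator. I would verify the latter two directly from the definitions in \Cref{sec:three-derivations}; this is the point where care with the precise normalizations of $\checkW$ and $\checkD$ is needed.

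For (ii), $\mathfrak d=\ker(\partial_1)$ is the kernel of a derivation, hence a Lie subalgebra. Indeed, if $\partial_1f=\partial_1g=0$, then $\partial_1\uri(f,g)=\uri(\partial_1f,g)+\uri(f,\partial_1g)=0$. Since $\partial_1$ is homogeneous of weight $1$, its kernel is the direct sum of the kernels on each $\mathfrak B_k$, so $\mathfrak d$ is weight-graded.
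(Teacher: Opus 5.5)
Your proof is correct and is essentially the paper's argument. Both reduce the three commutator identities to $[T,\operatorname{ad}_{\uri}(x)]=\operatorname{ad}_{\uri}(T(x))$ with $x=\checkdelta^r\xi_1$, obtain the $\checkD$-identity from the lowest-weight computation $\checkD\checkdelta^r\xi_1=r^2\checkdelta^{r-1}\xi_1$ (the paper's \cref{lem:sl2-identities}(i) with $k=1$), and get part (ii) from $\mathfrak d$ being the kernel of the homogeneous derivation $\partial_1$.
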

We also prove that for $A\in\mathfrak m$ the component in $\mathfrak m$ of $\partial_{2r+1}(A)$ with respect to the decomposition in \cref{thm:B}(iii) is, up to an explicit nonzero scalar, the element $\uri_r(\xi_1,A)$ (see \eqref{eq:partial-uri-n}).  Hence the operations $\uri_r(\xi_1,-)$ are exactly the components in $\mathfrak m$ of the odd derivations acting on $\mathfrak m$.  We will also prove the chain of Lie algebras $\mathfrak d\subseteq\mathfrak m\subseteq\mathfrak B$.

In \Cref{sec:dm0} we recall the definition of $\mathfrak{dm}_0$, equipped with the Ihara bracket $\{\ ,\ \}_{\mathrm{Ih}}$.  Adjoin a formal central generator $\sigma_1$ of weight one and write $\mathfrak{dm}_0^+=\QQ\sigma_1\oplus\mathfrak{dm}_0$.  In \eqref{eq:dm-to-m} we construct an explicit map $\iota:\mathfrak{dm}_0^+\to\mathfrak d$ with $\iota(\sigma_1)=\xi_1$.

\begin{mainthmE}\label{thm:E}
The map $\iota$ is a Lie algebra isomorphism
\[
 \iota:
 \bigl(\mathfrak{dm}_0^+,\{\ ,\ \}_{\mathrm{Ih}}\bigr)
 \overset{\sim}{\longrightarrow}
 \bigl(\mathfrak d,\uri\bigr).
\]
\end{mainthmE}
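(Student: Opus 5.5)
We prove \cref{thm:E} by splitting it into four steps: $\iota$ lands in $\mathfrak d$, $\iota$ is injective, $\iota$ intertwines the Ihara bracket with uri, and $\iota$ is surjective. The map $\iota$ of \eqref{eq:dm-to-m} should send $\psi\in\mathfrak{dm}_0$ of weight $k$ and depth $\ge1$ to a bimould. Its depth-$r$ component is built from the polynomial mould $\operatorname{ma}(\psi)$ attached to $\psi$, placed in the $X$-variables (or its swap placed in the $Y$-variables), and completed by correction terms that make it swap invariant. The element $\sigma_1$ goes to $\xi_1$.

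\textbf{Well-definedness.} We first check that $\iota(\psi)$ is alternil, swap invariant and even in depth one, so that $\iota(\psi)\in\mathfrak B_k$.
\begin{enumerate}
\item[(a)] Alternality of $\operatorname{ma}(\psi)$ is the shuffle condition.
\item[(b)] Alternility of the swap is Racinet's stuffle condition, restricted to weight-homogeneous, depth-graded polynomials with no $Y$-dependence.
\item[(c)] The constant and linear correction terms are governed by the $\mathfrak{dm}_0$ condition on the depth-one coefficient.
\end{enumerate}
Next we show $\partial_1\iota(\psi)=\uri(\xi_1,\iota(\psi))=0$. Since $\xi_1$ is constant in depth one, $\uri(\xi_1,-)$ reduces to an explicit difference operator. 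This operator vanishes on bimoulds that depend only on the $X$-variables, or only on the $Y$-variables, in a translation-invariant way. Translation invariance of $\operatorname{ma}(\psi)$ is exactly the push/linearity property encoded by $\mathfrak{dm}_0$. The relation $\uri(\xi_1,\xi_1)=0$ is automatic.

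\textbf{Injectivity and bracket compatibility.} Injectivity is immediate, because $\psi$ is recovered from the depth-graded top part of $\iota(\psi)$. For the bracket, recall from \cite{Bu1} that on bimoulds depending only on $X$, uri restricts to ari. By Schneps' theorem, ari corresponds to the Ihara bracket via $\operatorname{ma}$. Hence $\iota\{\psi,\phi\}_{\mathrm{Ih}}$ and $\uri(\iota\psi,\iota\phi)$ agree on their $X$-only leading part. Their difference lies in $\mathfrak d$ and has vanishing leading part. Centrality of $\sigma_1$ is consistent with $\iota(\sigma_1)=\xi_1$ and $\mathfrak d=\ker\partial_1$. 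So the compatibility reduces to a rigidity statement: an element of $\mathfrak d$ with vanishing leading $X$-part is zero.

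\textbf{Surjectivity and rigidity.} Both steps use the following lemma.
\begin{lem}
Let $A\in\mathfrak d_k$. Then $A$ is determined by its lowest-depth nonzero component, that component depends only on the $X$-variables, and it is the $\operatorname{ma}$-image of an element of $\mathfrak{dm}_0$ (or a multiple of $\xi_1$ when $k=1$).
\end{lem}
To prove the lemma, expand $\uri(\xi_1,A)=0$ depth by depth. In depth $r+1$, the equation expresses the $Y$-derivative data of $A$ in depth $r$ through lower-depth data. Swap invariance then transfers this to the $X$-side. An induction on depth shows that $A$ minus $\iota$ of its leading datum vanishes. Verifying the $\mathfrak{dm}_0$ conditions for the extracted datum uses alternality and alternility together with swap invariance, which gives the stuffle relation.

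I expect this lemma to be the main obstacle: it requires controlling the full uri bracket with $\xi_1$ in every depth. As a first attempt, I would use \cref{thm:D} to reduce to the kernel of $\checkD$, so that $\mathfrak d\subseteq\mathfrak m$. In $\mathfrak m$, the relation $\checkD=\sum_i\partial_{X_i}\partial_{Y_i}$ restricts the mixed $X$/$Y$ monomials, and that should make the induction tractable.
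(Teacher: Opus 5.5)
\textbf{There is a genuine gap: the lemma your surjectivity and rigidity steps rest on is false, and the key idea is missing.}

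An element of $\mathfrak d$ is not determined by its lowest-depth component. Every uri bracket vanishes in depth one, so $\xi_{11}$ and $\xi_{11}+\uri(\xi_3,\uri(\xi_3,\xi_5))$ both lie in $\mathfrak d_{11}$ and share their depth-one component. Yet they differ, since the added bracket corresponds to $\{\sigma_3,\{\sigma_3,\sigma_5\}\}_{\mathrm{Ih}}\neq0$. Nor is the lowest component $X$-only: swap invariance forces a matching $Y$-part, e.g.\ $(\xi_3)_1=X_1^2+Y_1^2$.

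What you need is a statement about all depths at once. If $A\in\BARI^{\mathrm{pol}}$ satisfies $\uri(\xi_1,A)=0$, then no component of $A$ contains a mixed $X,Y$-monomial (\cref{thm:linear-kernel}). The paper proves this by induction on depth. By \cref{lem:partial-one}, $(\partial_1A)_{r+1}$ is $\Lambda_r(A_r)$ plus terms built from $A_1,\ldots,A_{r-1}$. The operator $\Lambda_r$ preserves the $(X,Y)$-bidegree and is injective in bidegrees $(a,b)$ with $a,b\geq1$. Injectivity comes in two steps. First, translating all $X_j$ by $u$ and a leading-monomial argument for $\Gamma_r$ force kernel elements to be translation invariant. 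Second, a leading-monomial argument for $K_r$ in the variables $X_j-X_{j+1}$ kills them.

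Surjectivity then needs the separate fact that the separated elements of $\mathfrak B$ are exactly $\iota(\mathfrak{dm}_0^+)$. One writes $F_r=A_r(Y)+\swapop(A)_r(X)+C_r$, shows that $A$ is alternal and $\swapop(A)+C$ is alternil, and applies Burmester's theorem that $\operatorname{ma}$ is an isomorphism from $\mathfrak{dm}_0$ onto such moulds. Your closing idea of extracting the $\mathfrak{dm}_0$-conditions is right, but it must be applied to the whole $Y$-part, not to the lowest component. Likewise, your rigidity principle holds only in this form: a separated element of $\mathfrak B$ of weight at least two is determined by its nonconstant $Y$-only part. It applies to $\uri(\iota\psi,\iota\phi)$ only because this bracket lies in $\mathfrak d$ and is therefore separated by the theorem above.

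\textbf{Several supporting steps also fail as written.}

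Your mechanism for $\partial_1\iota(\psi)=0$ is wrong. The operator $\partial_1$ kills every $Y$-only and every constant polynomial bimould, but not translation-invariant $X$-only ones: for $B=(0,X_1-X_2,0,\ldots)$, \cref{lem:partial-one} gives $(\partial_1B)_3=X_1-2X_2+X_3$. Moreover, the $X$-part of $\iota(\psi)$, e.g.\ $X_1^{k-1}$ in depth one, is not translation invariant anyway. The paper gets $\iota(\psi)\in\mathfrak d$ from $\iota=\pm\Phi\circ\theta$ and a primitivity and $\tau$-invariance argument in $\mathfrak{bm}_0$. A quicker repair is the following chain: $\partial_1\iota(\psi)$ equals $\partial_1$ of the $X$-part, so it is an $X$-only element of $\mathfrak B$, so it is constant by swap invariance, so it is zero by alternility in depth $k+1\geq2$.

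In the bracket step, uri agrees with ari on $Y$-only polynomial bimoulds, where $C(B)=0$, but not on $X$-only ones: for $B=(X_1^2,0,\ldots)$ one has $C(B)_2=3(X_1-X_2)$. Since $\iota(\psi)$ has an $X$-part, a $Y$-part and constants, the cross terms must also be controlled. The paper sets $X=0$, which commutes with all flexion substitutions and kills $C(G)$ for separated $G$, and uses that ari ignores constant bimoulds (\cref{lem:x-free}).

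Finally, \cref{thm:D} only gives $[\checkD,\partial_1]=0$, hence $\checkD(\mathfrak d)\subseteq\mathfrak d$, not $\mathfrak d\subseteq\mathfrak m$; the paper obtains that inclusion as a by-product of \cref{thm:E}. And $\checkD A=0$ does not exclude mixed monomials, as $\uri_1(\xi_1,\xi_1)\in\mathfrak m_4$ shows.
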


The proof uses an explicit formula for $\partial_1$ and a leading-term argument which does not involve the defining conditions of $\mathfrak B$.  In particular, $\mathfrak d$ consists exactly of the elements of $\mathfrak B$ without mixed $X,Y$-monomials, and the restriction of $\partial_{2r+1}$ to $\mathfrak m$ is injective for every $r\geq1$.  We also show that $\ker(\partial_{2r+1})$ is one-dimensional for every $r\geq1$ (\cref{thm:odd-kernels}).  A similar statement to \cref{thm:E} is proven independently in \cite{BKS}.

Conjecturally, $\mathfrak{dm}_0$ is generated by one element in each odd weight $k\geq3$.  Using the canonical genus-zero elements $\sigma_k$ constructed in \cite{DDDHKSSV},\footnote{They are denoted by $g_k$ there.} together with the formal element $\sigma_1$, we define $\xi_k=\iota(\sigma_k)$ for odd $k\geq1$.  The Eisenstein relations $\uri_0(\xi_1,\xi_k)=0$ place all the $\xi_k$ in $\mathfrak d$.  Formula \eqref{eq:partial-uri-n} shows how the higher operations $\uri_r(\xi_1,-)$ measure the action of the remaining elements in the $\checkdelta$-orbit of $\xi_1$. Ordinary uri brackets of the $\xi_k$ remain in the image of $\iota$, while the higher operations need not preserve this image.  For example, $\uri_1(\xi_1,\xi_1)\in\mathfrak m_4$ has mixed $X,Y$-monomials. We write $\langle\xi_k\mid k\geq1\text{ odd}\rangle_{\uri_\bullet}$ for the smallest graded subspace of $\mathfrak m$ which contains these elements and is closed under all the operations $\uri_n$.

\begin{mainconjF}\label{conj:uri-n-generation}
\begin{enumerate}
\item[(i)] The elements $\xi_k$ generate $\mathfrak m$ under the
operations $\uri_n$, i.e.\
\[
 \mathfrak m
 =\left\langle\xi_k\ \middle|\ k\geq1\text{ odd}\right\rangle_{\uri_\bullet}.
\]
\item[(ii)] The elements $\checkdelta^r(\xi_k)$ with $r\geq0$ and odd
$k\geq1$ generate the Lie algebra $(\mathfrak B,\uri)$.
\end{enumerate}
\end{mainconjF}

Part (ii) is due to K\"uhn, who verified it through weight thirteen (see also \cite{BKS}).  It is also stated in \cite{Bu1}*{Conjecture~1.22(ii)}.  The two parts are equivalent by the change of basis in \cref{lem:uri-n-generation}.

In \eqref{eq:C-fg} we define, for period polynomials $f,g\in W_k^\pm$, an explicit binary $\uri_n$-combination $\mathcal C_{f,g}\in\mathfrak m_k$. Similar elements, using the polynomials $P_{f,g}$ studied in \cite{Co}, together with the associated depth-graded Lie algebra, are considered in \cite{BKS}.   To state our final result, let $\mathcal F_k$ be the span of the weight-$k$ expressions obtained by iterating the operations $\uri_n$ on the $\xi_j$ and containing at least four occurrences of these generators.

\begin{mainthmG}\label{thm:G}
\begin{enumerate}
\item[(i)] For even $k\geq4$ and $f,g\in W_k^\pm$, the element
$\mathcal C_{f,g}$ vanishes in depths at most three.
\item[(ii)] For the period polynomials $p^\pm_\Delta\in W_{12}^\pm$ of
the Ramanujan $\Delta$ function, there are explicit correction terms $Q^\pm\in\mathcal F_{12}$, given in the \hyperref[sec:weight-twelve-corrections]{Appendix}, such that
\[
 \mathcal C_{p^+_\Delta,p^+_\Delta}+Q^+=0,
 \qquad
 \mathcal C_{p^-_\Delta,p^-_\Delta}+Q^-=0.
\]
\end{enumerate}
\end{mainthmG}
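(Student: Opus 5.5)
Theorem G splits into a structural part and a computational part, and I treat them separately.

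For part (i), I would first unwind the definition \eqref{eq:C-fg} of $\mathcal C_{f,g}$ as a finite combination of elements $\uri_n(\xi_a,\xi_b)$. The combination is indexed by the coefficients of the period polynomials $f,g\in W_k^\pm$; by analogy with the Rankin--Cohen/Ihara constructions, it is a sum over pairs of odd weights $a+b+2n=k$. Since all $\xi_j$ lie in $\mathfrak d$, they have no mixed $X,Y$-monomials, and by \cref{thm:E} each $\xi_j$ has depth-one component (up to normalization) the monomial $X_1^{j-1}$ or its swap-symmetric counterpart. I then expand each $\uri_n(\xi_a,\xi_b)$ by \eqref{eq:uri-n} into brackets $\uri(\checkdelta^r\xi_a,\checkdelta^s\xi_b)$ and track the depth filtration. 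The uri bracket of elements concentrated in depths $\ge1$ lands in depths $\ge2$. The depth-$2$ part is bilinear in the depth-one parts. The depth-$3$ part involves the depth-one and depth-two parts of the $\xi_j$, and for the genus-zero $\sigma_k$ of \cite{DDDHKSSV} the depth-two part is explicit. The depth-$1$ component of $\mathcal C_{f,g}$ vanishes trivially.

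The key steps are as follows. In depth $2$, I show that the coefficient combination becomes $\sum_{a+b=\cdot}c_{a}(f)c_b(g)\,P_{a,b,n}(X_1,X_2,Y_1,Y_2)$. The vanishing should then reduce to the period relations $f|(1+S)=f|(1+U+U^2)=0$, transported through swap and the explicit form of $\checkdelta$ in depth one, which is multiplication by $X_1Y_1$ corrected by an inner term. This is the classical Ihara--Takao / Schneps mechanism, now with the $\checkdelta$-orbit included. In depth $3$, I use the same identity together with the Jacobi identity and the derivation property of $\checkdelta$ from \cref{thm:main}. This rewrites the depth-$3$ part as a bracket of a depth-one element with the vanishing depth-$2$ expression, plus a term controlled by the depth-two parts of $\sigma_a$. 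The latter should cancel because the depth-two parts of the $\sigma_k$ satisfy the linearized double shuffle relations; this is again a period-polynomial statement.

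I expect the depth-$3$ step to be the main obstacle, because the depth-two corrections of $\xi_j$ are not canonical, and the cancellation may need a direct generating-function computation in $X_1,X_2,X_3,Y_1,Y_2,Y_3$. My first attempt would be to work with generating series over all weights and verify an identity of rational functions in the spirit of \cite{Co}. As a fallback for the range of weights actually needed, I would check the identity by computer.

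For part (ii), the proof is a finite verification. The elements $Q^\pm$ listed in the Appendix are explicit $\uri_n$-expressions with at least four generators, so $Q^\pm\in\mathcal F_{12}$ by construction. One computes $\mathcal C_{p^\pm_\Delta,p^\pm_\Delta}+Q^\pm$ in $\mathfrak B_{12}$ in all depths $\le 12$, using the explicit formulas for $\checkdelta$ and uri, and checks that it is zero. By part (i) only depths $\ge4$ need checking, and finite-depth polynomiality makes the check finite.
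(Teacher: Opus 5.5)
Your treatment of part (ii), an exact verification of the Appendix identities in all depths, agrees with the paper, and your depths one and two in part (i) are fine. In depth two you need not go back to the period relations of $f$ and $g$. The coefficients in \eqref{eq:C-fg} are by definition those of the expansion \eqref{eq:P-expansion}, and $(\checkdelta^m\xi_\ell)_1=g_{\ell,m}$ exactly, because the correction $\uri(-,E)$ has no depth-one part. Hence \cref{lem:depth-two} identifies $(\mathcal C_{f,g})_2$ with the three-term sum in \eqref{eq:manin}, which vanishes.

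The genuine gap is depth three, and your plan there is not an argument. $\mathcal C_{f,g}$ is a linear combination of single brackets $\uri(\checkdelta^m\xi_\ell,\checkdelta^n\xi_j)$, so there is no nested bracket for the Jacobi identity to turn into ``a depth-one element bracketed with the vanishing depth-two expression''. The depth-three components of these brackets involve the depth-two components of $\checkdelta^m\xi_\ell$, hence of $\sigma_\ell$ and of the $\uri(-,E)$ correction. The claimed cancellation via linearized double shuffle is only asserted. The computer fallback covers finitely many weights, while (i) is claimed for every even $k\geq4$.

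The missing idea is a parity argument, which settles depth three once depths one and two vanish.
\begin{enumerate}
\item[(a)] Since $\mathcal C_{f,g}\in\mathfrak B_k$, it is alternil and swap invariant.
\item[(b)] If $(\mathcal C_{f,g})_3\neq0$, it is the lowest nonzero component. It is therefore alternal by \cref{lem:lowest-component}(i), and swap invariant because swap preserves depth.
\item[(c)] Regarded as a bimould concentrated in depth three, it is then alternal with alternal swap. The argument of Schneps' Lemma~2.5.5 shows that it is even, $(\mathcal C_{f,g})_3(-X\mid -Y)=(\mathcal C_{f,g})_3(X\mid Y)$.
\item[(d)] But it is homogeneous of ordinary degree $k-3$, which is odd, so it vanishes.
\end{enumerate}
No information about the depth-two parts of the $\xi_\ell$ is needed. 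Since $k-2$ is even, the same parity trick gives nothing in depth two, which is why the three-term identity is needed there.
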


\noindent\textbf{Acknowledgements.}
The author thanks Ulf K\"uhn for helpful comments on an earlier version of this paper and Hanamichi Kawamura for his help with understanding mould theory over the past year.
This project was partially supported by JSPS KAKENHI Grant 26K22254.

\section{The three derivations}\label{sec:three-derivations}

\subsection{Bimoulds and the uri bracket}

We use \'{E}calle's bimould and flexion formalism \cites{Ec1,Ec2}, with the conventions of \cites{Sc,Kaw1} and the coefficient normalization of \cite{BIM}.

A bimould is a family
\[
 A=(A_r)_{r\geq0},
 \qquad
 A_r\in\QQ[[X_1,Y_1,\ldots,X_r,Y_r]].
\]
We write $\BARI$ for the bimoulds with $A_0=0$.  When using Kawamura's notation, this space equipped with the ari bracket below is denoted by $\ARI$.  A bimould is normalized if $A_0=1$, and $\one=(1,0,0,\ldots)$ denotes the normalized unit.  We write $\BARI^{\mathrm{pol}}$ for the polynomial bimoulds with only finitely many nonzero components.

Every polynomial bimould has an expansion
\begin{equation}\label{eq:coefficient-expansion}
 A_r\bi{X_1,\ldots,X_r}{Y_1,\ldots,Y_r}
 =
 \sum_{\substack{k_1,\ldots,k_r\geq1\\m_1,\ldots,m_r\geq0}}
 a_A\bi{k_1,\ldots,k_r}{m_1,\ldots,m_r}
 \prod_{i=1}^r\frac{X_i^{k_i-1}Y_i^{m_i}}{m_i!}.
\end{equation}
Let $y_{k,m}$ be a letter of weight $k+m$.  On words in these letters, the shuffle product is defined by $1\shuffle w=w\shuffle1=w$ and
\[
 y_{k,m}u\shuffle y_{l,n}v
 =y_{k,m}(u\shuffle y_{l,n}v)
  +y_{l,n}(y_{k,m}u\shuffle v).
\]
The bi-stuffle product is defined by $1*w=w*1=w$ and
\begin{align*}
 y_{k,m}u*y_{l,n}v
 ={}&y_{k,m}(u*y_{l,n}v)
 +y_{l,n}(y_{k,m}u*v)
 +y_{k+l,m+n}(u*v).
\end{align*}
For a bimould $A$, the coefficient map sends $y_{k_1,m_1}\cdots y_{k_r,m_r}$ to the coefficient in \eqref{eq:coefficient-expansion}.  The bimould $A$ is \emph{alternal} if this map vanishes on $u\shuffle v$, and \emph{alternil} if it vanishes on $u*v$, for every pair of nonempty words $u,v$.  For bimoulds with poles, let $w_i=\bi{X_i}{Y_i}$ and take words $u=w_1\cdots w_p$ and $v=w_{p+1}\cdots w_{p+q}$, put
\[
A(u\shuffle v)
=\sum_{\sigma\in\operatorname{Sh}(p,q)}
 A_{p+q}(w_{\sigma(1)},\ldots,w_{\sigma(p+q)}).
\]
Here $\operatorname{Sh}(p,q)$ consists of the permutations $\sigma\in S_{p+q}$ such that
\[
 \sigma^{-1}(1)<\cdots<\sigma^{-1}(p),\qquad
 \sigma^{-1}(p+1)<\cdots<\sigma^{-1}(p+q).
\]
Alternality then means $A(u\shuffle v)=0$. Alternility for bimoulds with poles is defined by the corresponding flexed bi-stuffle identities of \cite{BKa}*{Section~2}.

A homogeneous bimould has weight $k$ if its coefficient in \eqref{eq:coefficient-expansion} can be nonzero only when
\begin{equation*}
 \sum_{i=1}^r(k_i+m_i)=k.
\end{equation*}
The swap and push operators are
\begin{align*}
 \swapop(A)_r\bi{X_1,\ldots,X_r}{Y_1,\ldots,Y_r}
 &={}
 A_r\bi{Y_1+\cdots+Y_r,\ldots,Y_1+Y_2,Y_1}
 {X_r,X_{r-1}-X_r,\ldots,X_1-X_2},
 \\
 \pushop(A)_r\bi{X_1,\ldots,X_r}{Y_1,\ldots,Y_r}
 &={}
 A_r\bi{-X_r,X_1-X_r,\ldots,X_{r-1}-X_r}
 {-Y_1-\cdots-Y_r,Y_1,\ldots,Y_{r-1}}.
\end{align*}
As in \cite{BKa}*{Section~2}, we write $\BARIswapil$ for the power-series bimoulds which are alternil, swap invariant and even in depth one. The finite-depth polynomial part of $\BARIswapil$ is therefore
\begin{equation*}
 \mathfrak B
 =\left\{
 A\in\BARI^{\mathrm{pol}}
 \ \middle|\
 A\text{ is alternil},\quad
 \swapop(A)=A,\quad
 A_1\bi{-X_1}{-Y_1}=A_1\bi{X_1}{Y_1}
 \right\}.
\end{equation*}
Its homogeneous weight-$k$ part is denoted by $\mathfrak B_k$, and we set $\mathfrak B_k=0$ for $k\leq0$.  Notice that a homogeneous depth-$r$ polynomial of ordinary degree $d$ has weight $r+d$.

The flexion operators produce poles along the hyperplanes $X_i=0$ and $X_i=X_j$.  We therefore put
\begin{equation*}
 \cL_r
 =\QQ[[X_1,Y_1,\ldots,X_r,Y_r]]
 \left[
 \frac1{X_i},\frac1{X_i-X_j}
 \ \middle|\ 1\leq i\ne j\leq r
 \right]
\end{equation*}
and let $\BARIloc$ consist of the bimoulds with $A_0=0$ and $A_r\in\cL_r$.

The concatenation product is
\begin{equation*}
 \muop(A,B)(w_1\cdots w_r)
 =\sum_{i=0}^rA(w_1\cdots w_i)B(w_{i+1}\cdots w_r).
\end{equation*}
Its unit is $\one$, and $\invmu(A)$ denotes the inverse of a normalized bimould.  For $s\geq3$, we write
\[
 \muop(M_1,\ldots,M_s)
 =\muop\bigl(\muop(M_1,\ldots,M_{s-1}),M_s\bigr).
\]
Given a bimould $M$, a linear form $t$ and $p\leq q$, put
\[
 M^{(t)}\langle p,q\rangle
 =M_{q-p+1}\bi{X_p-t,\ldots,X_q-t}{Y_p,\ldots,Y_q},
 \qquad
 M^{(t)}\langle p,p-1\rangle=1,
\]
and write $Y_{p:q}=Y_p+\cdots+Y_q$.  For normalized bimoulds $B,C$, define
\begin{align*}
 \gaxit_{B,C}(M)(w_1\cdots w_r)
 ={}&\sum_{s\geq1}
 \sum_{\substack{0=q_0<p_1\leq q_1<\cdots<p_s\leq q_s=r}}
 M(\widehat w_{p_1}\cdots\widehat w_{p_s})\notag\\
 &\quad\cdot\prod_{i=1}^s
 B^{(X_{p_i})}\langle q_{i-1}+1,p_i-1\rangle
 C^{(X_{p_i})}\langle p_i+1,q_i\rangle,
\end{align*}
where
\begin{equation*}
 \widehat w_{p_i}=\bi{X_{p_i}}{Y_{q_{i-1}+1:q_i}},
 \qquad
 \gaxit_{B,C}(M)(\varnothing)=M(\varnothing).
\end{equation*}
This is the standard gaxit action of \cite{Kaw1}*{Definition~3.7}.  It is multiplicative for $\muop$ \cite{Kaw1}*{Proposition~3.12}.

Put $\ganit_C=\gaxit_{\one,C}$.  The GAXI composition law makes this operator invertible for every normalized $C$ \cite{Kaw1}*{Proposition~3.13}.  Over the dual numbers, define
\begin{equation*}
 \gaxit_{\one+\varepsilon B,\one+\varepsilon C}
 =\operatorname{id}+\varepsilon\axit(B,C),
 \qquad \varepsilon^2=0,
\end{equation*}
and set
\begin{equation*}
 \amit(B)=\axit(B,0),
 \qquad
 \anit(B)=\axit(0,B),
 \qquad
 \arit(B)=\axit(B,-B).
\end{equation*}
Since gaxit is multiplicative, every $\axit(B,C)$ is a derivation for $\muop$. Finally,
\begin{align*}
 \preari(A,B)&=\arit(B)(A)+\muop(A,B),
 \\
 \ari(A,B)&=\preari(A,B)-\preari(B,A).
\end{align*}
By the gaxit composition law, ari is a Lie bracket \cite{Sc}*{Proposition~2.2.2}.

The normalized bimoulds pic, poc and $\operatorname{pic}^{Y}$ are given in positive depth by
\begin{equation*}
 \pic_r=\frac1{X_1\cdots X_r},
 \qquad
 \poc_r=-\frac1{X_1(X_1-X_2)\cdots(X_{r-1}-X_r)},
 \qquad
 (\picy)_r=\frac1{Y_1\cdots Y_r}.
\end{equation*}
Applying the flexion-unit inverse formula in \cite{Kaw1}*{Proposition~5.3} gives
\begin{equation*}
 T:=\ganit_{\pic},
 \qquad
 T^{-1}=\ganit_{\poc},
\end{equation*}
and $T$ identifies alternal and alternil bimoulds \cite{Ko}*{Theorem~3.24} (see also \cite{Bu1}*{Proposition~5.9}).  The uri bracket is the transport of ari by $T$,
\begin{equation*}
 \uri(A,B)=T\bigl(\ari(T^{-1}A,T^{-1}B)\bigr).
\end{equation*}

\subsection{The operators}

Let $E$ be the bimould whose only nonzero components are
\begin{equation}\label{eq:E-definition}
 E_1\bi{X_1}{Y_1}=-\frac{X_1+Y_1}{2},
 \qquad
 E_2\bi{X_1,X_2}{Y_1,Y_2}=\frac14.
\end{equation}
Notice that $E=\delta_{\mathrm{BvI}}(\one)$, where $\delta_{\mathrm{BvI}}$ is the bimould operator of \cite{BIM} recalled in \eqref{eq:BvI-definition}.  In \cref{prop:delta-comparison} we show that $\checkdelta$ and $\delta_{\mathrm{BvI}}$ differ by $\preuri(E,-)$.

For $A=(A_r)_{r\geq0}\in\BARIloc$, define
\begin{align}
 (QA)_r
 &=\left(\sum_{i=1}^rX_iY_i\right)A_r,
 \notag\\
 (\checkW A)_r
 &=\left(r+\sum_{i=1}^r
 X_i\partial_{X_i}+Y_i\partial_{Y_i}\right)A_r,
 \notag\\
 (\checkD A)_r
 &=\sum_{i=1}^r\partial_{X_i}\partial_{Y_i}A_r,
 \notag\\
 \checkdelta(A)
 &=\ganit_{\pic}\bigl(Q(\ganit_{\poc}(A))\bigr)
   +\uri(A,E).
 \label{eq:delta-definition}
\end{align}

\begin{prop}\label{prop:ambient-triple}
The maps $\checkW$, $\checkD$ and $\checkdelta$ are derivations of $(\BARIloc,\uri)$ and satisfy \eqref{eq:sl2-relations}.  Moreover, $\checkW$ and $\checkD$ preserve $\mathfrak B$ and have weights $0$ and $-2$, respectively.
\end{prop}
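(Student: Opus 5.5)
The strategy is to transport everything to the ari side, where the operators have simple descriptions, and then come back via $T=\ganit_{\pic}$. Put $\widetilde W=T^{-1}\checkW T$, $\widetilde D=T^{-1}\checkD T$ and $\widetilde\delta=Q+\operatorname{ad}_{\ari}(T^{-1}E)$. By construction
\[
 \checkdelta=T\circ\widetilde\delta\circ T^{-1},
\]
since $T(\ari(T^{-1}A,T^{-1}E))=\uri(A,E)$. Because $\uri$ is the transport of $\ari$ by $T$, a map is a uri-derivation exactly when its $T$-conjugate is an ari-derivation. The same conjugation preserves commutator relations. So it suffices to prove everything for $\widetilde W$, $\widetilde D$ and $\widetilde\delta$ on $(\BARIloc,\ari)$.

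\textbf{Step 1: $\checkW$.} First I show that $\checkW$ commutes with $T$. The bimould $\pic$ is homogeneous of total degree $-r$ in depth $r$, so $\ganit_{\pic}$ preserves the grading by $\checkW$-eigenvalue (the degree plus the depth). Hence $\widetilde W=\checkW$. Next I check directly that $\ari$ is homogeneous for this grading: $\muop$ adds depths and degrees, and $\arit$ preserves total degree plus depth. It follows that $\checkW$ is a derivation of $\ari$. The relations $[\checkW,\checkD]=-2\checkD$ and $[\checkW,\checkdelta]=2\checkdelta$ are then just the statements that $\checkD$ has degree $-2$ and that $Q$ has degree $+2$. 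For the second one I also need $\operatorname{ad}(T^{-1}E)$ to have weight $2$, which holds because $E$ is homogeneous of weight $2$ (depth one: degree $1$ plus depth $1$; depth two: degree $0$ plus depth $2$).

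\textbf{Step 2: $\checkD$ and the commutator $[\checkD,\checkdelta]$.} Here I work blockwise, as in the paper's "blockwise identities". Each term of $\ganit_C(M)$ is a product of factors in disjoint blocks of variables $\{X_{p},Y_{p}\}$. In these factors the $Y$'s enter only through $M$, via the substitution $Y\mapsto Y_{q_{i-1}+1:q_i}$, and through linear shifts of $X$. I would verify, using the chain rule on each block, the identity
\[
 \sum_i\partial_{X_i}\partial_{Y_i}\circ\ganit_C=\ganit_C\circ\sum_i\partial_{X_i}\partial_{Y_i}
\]
for $C=\pic,\poc$. This uses that $C$ depends only on $X$. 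The mixed terms cancel because $\sum_j\partial_{X_j}$ kills the differences $X_j-X_{p_i}$ and $\partial_{Y_j}$ acts identically on every $Y_j$ within a block. The same kind of computation shows that $\checkD$ is a derivation of $\ari$. This is the second-order analogue of the fact that translation-invariant vector fields act as derivations. Hence $\checkD$ commutes with $T$ and is a uri-derivation.

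For the commutator, $[\checkD,Q]=\checkW$ is the classical Weyl-algebra identity $[\partial_x\partial_y,xy]=1+x\partial_x+y\partial_y$ summed over $i$; this is exactly why the shift "$+r$" appears in $\checkW$. Then
\[
 [\checkD,\operatorname{ad}_{\uri}(E)]=\operatorname{ad}_{\uri}(\checkD E)=0,
\]
since $\checkD E=0$ ($E_1$ is linear and $E_2$ is constant). Together these give $[\checkD,\checkdelta]=\checkW$.

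\textbf{Step 3: $\checkdelta$ is a derivation.} This is the main obstacle, because $Q$ is not an ari-derivation by itself. I expect $Q$ to satisfy
\[
 Q\,\ari(A,B)=\ari(QA,B)+\ari(A,QB)+\text{(defect)}.
\]
The defect comes from the $\arit$ part: there the $Y$'s of a block are summed, so $X_pY_{q_{i-1}+1:q_i}$ differs from $\sum_j X_jY_j$ by terms $(X_j-X_p)Y_j$. I would compute this defect and aim to show it equals $-[\operatorname{ad}(T^{-1}E),\cdot\,]$-type terms, that is $\ari(A,\theta B)$-corrections which are cancelled by the inner part. Concretely, I aim to prove that $\widetilde\delta(\ari(A,B))-\ari(\widetilde\delta A,B)-\ari(A,\widetilde\delta B)$ vanishes, and since $\operatorname{ad}$ is a derivation this reduces to an identity among flexion units involving $\pic$, $\poc$ and $E$. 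If the direct computation becomes unwieldy, I would instead use the comparison with $\delta_{\mathrm{BvI}}$: that operator is known from \cite{BIM} to be a derivation for the relevant product. Then \cref{prop:delta-comparison}-type manipulations with $\preuri(E,-)$ would convert this into the statement for uri.

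Finally, $\checkW$ and $\checkD$ preserve $\mathfrak B$. Alternility and evenness in depth one are compatible with homogeneous rescaling and with $\sum_i\partial_{X_i}\partial_{Y_i}$ at the coefficient level. Swap invariance follows because swap exchanges $X$ and $Y$ through linear maps that are mutually transpose-inverse, so $\sum_iX_iY_i$ and $\sum_i\partial_{X_i}\partial_{Y_i}$ are swap-invariant.
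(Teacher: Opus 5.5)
Your Steps 1 and 2 are sound and follow the paper's argument: you conjugate to the ari side, commute $\checkW$ and $\checkD$ past $\ganit_{\pic}$, obtain $[\checkD,Q]=\checkW$ from the Weyl algebra, and use $\checkD E=0$. Your transpose-inverse remark for swap is a neat form of the paper's telescoping computation.

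\textbf{The gap is Step 3.} Your premise that $Q$ is not an ari-derivation is false. The plan built on it also cannot work in principle. The map $A\mapsto\ari(A,T^{-1}E)$ is itself a derivation, so the Leibniz defect of $\widetilde\delta$ equals that of $Q$, and no inner term can cancel it. For the same reason, your reduction to ``an identity among flexion units involving $\pic$, $\poc$ and $E$'' is misdirected: everything reduces to $Q$ alone on the ari side. Had $Q$ a nonzero defect, $\checkdelta$ would simply fail to be a derivation.

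What you are missing is that the terms you found are not a defect. Take a term of $\arit(B)(A)$ with nontrivial block $[a,b]$ and marked index $p$. The $A$-factor receives $(X_p,Y_{a:b})$ and the unchanged pairs outside the block. The $B$-factor receives exactly the pairs $(X_j-X_p,Y_j)$ for $j\neq p$ in the block. Since
\[
 \sum_{j=a}^bX_jY_j=X_pY_{a:b}+\sum_{\substack{a\leq j\leq b\\ j\neq p}}(X_j-X_p)Y_j,
\]
the extra terms $(X_j-X_p)Y_j$ are precisely $Q$ acting on the arguments of the $B$-factor. Hence $Q\arit(B)(A)=\arit(QB)(A)+\arit(B)(QA)$. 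Together with the splitting at the concatenation cut for $\muop$, this makes $Q$ a derivation of preari and of ari, so $\widetilde\delta$ is a sum of two derivations.

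This is the same substitution $x_0=X_p$, $y_0=Y_{a:b}$, $x_j=X_j-X_p$, $y_j=Y_j$ that you need anyway to justify ``the same kind of computation'' for $\checkD$ on $\arit$. Your ganit shortcut, that every $\partial_{Y_j}$ acts alike within a block, fails there, because the $B$-factor depends on the individual $Y_j$. Under this substitution, $\sum_j\partial_{X_j}\partial_{Y_j}$ and $\sum_jX_jY_j$ split in parallel.

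Your fallback via $\delta_{\mathrm{BvI}}$ does not close the gap. That operator is known to be a derivation on the coefficient (bi-stuffle) side, not for uri. Moreover, the comparison \eqref{eq:delta-comparison} is only available for polynomial bimoulds, whereas the statement concerns $\BARIloc$.

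A smaller point: saying the conditions are ``compatible at the coefficient level'' does not yet show that $\checkD$ preserves alternility. You need two facts. First, because of the normalization $Y^m/m!$, $\checkD$ amounts to precomposing the coefficient map with the operator determined by $d(y_{k,m})=k\,y_{k+1,m+1}$. Second, $d$ is a derivation of the bi-stuffle product. This holds because on a merged letter $y_{k+l,m+n}$ it produces the coefficient $k+l$, the sum of the coefficients $k$ and $l$ on the two letters.
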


\begin{proof}
To check the derivation properties on the ari side, consider a block $[a,b]$ with marked index $p$, and put
\[
 x_0=X_p,
 \qquad y_0=\sum_{j=a}^bY_j,
 \qquad x_j=X_j-X_p,
 \qquad y_j=Y_j\quad(j\ne p).
\]
The chain rule and a direct calculation give
\begin{align}
 \sum_{j=a}^b
 \left(X_j\partial_{X_j}+Y_j\partial_{Y_j}\right)
 &=x_0\partial_{x_0}+y_0\partial_{y_0}
  +\sum_{\substack{a\leq j\leq b\\j\ne p}}
   \left(x_j\partial_{x_j}+y_j\partial_{y_j}\right),
 \notag\\
 \sum_{j=a}^b\partial_{X_j}\partial_{Y_j}
 &=\partial_{x_0}\partial_{y_0}
  +\sum_{\substack{a\leq j\leq b\\j\ne p}}
   \partial_{x_j}\partial_{y_j},
 \label{eq:D-flexion}\\
 \sum_{j=a}^bX_jY_j
 &=x_0y_0+
 \sum_{\substack{a\leq j\leq b\\j\ne p}}x_jy_j.
 \notag
\end{align}
Every term of $\arit(B)(A)$ has exactly one nontrivial block $[a,b]$. The marked pair $(x_0,y_0)$ and the unchanged pairs outside this block are arguments of the $A$-factor, while the other pairs in the block are arguments of the $B$-factor.  In each term, the output depth is the sum of the two factor depths.  Therefore
\begin{align*}
 \checkW\arit(B)(A)&=\arit(\checkW B)(A)+\arit(B)(\checkW A),\\
 \checkD\arit(B)(A)&=\arit(\checkD B)(A)+\arit(B)(\checkD A),\\
 Q\arit(B)(A)&=\arit(QB)(A)+\arit(B)(QA).
\end{align*}
For $\muop$ the same identities follow by splitting the variables at the concatenation cut.  Hence $\checkW$, $\checkD$ and $Q$ are derivations of preari and of ari.  Componentwise differentiation gives
\begin{equation}\label{eq:D-Q}
 [\checkD,Q]
 =r+\sum_{i=1}^r
 \left(X_i\partial_{X_i}+Y_i\partial_{Y_i}\right)
 =\checkW.
\end{equation}
Homogeneity also gives
\begin{equation}\label{eq:W-D-Q}
 [\checkW,\checkD]=-2\checkD,
 \qquad
 [\checkW,Q]=2Q.
\end{equation}

Since $\pic$ and $\poc$ have $\checkW$-weight zero, $\checkW\ganit_{\pic}=\ganit_{\pic}\checkW$ and $\checkW\ganit_{\poc}=\ganit_{\poc}\checkW$.  To treat $\checkD$, recall that $\ganit_C=\gaxit_{\one,C}$.  Every marked letter is therefore the first letter of its block.  The pairs $(x_0,y_0)$, one from each block, are the arguments of $A$, while the remaining pairs are the arguments of the corresponding $C$-factors.  Different blocks use disjoint sets of variables.  Formula \eqref{eq:D-flexion} therefore gives
\begin{equation*}
 \checkD\ganit_C(A)=\ganit_C(\checkD A)
 \qquad\text{if }\checkD C=0.
\end{equation*}
Both $\pic$ and $\poc$ depend only on the $X$-variables.  Therefore, $\checkD$ commutes with $\ganit_{\pic}$ and $\ganit_{\poc}$.  Transport from ari to uri gives the corresponding relations for $\ganit_{\pic}Q\ganit_{\poc}$.

The bimould $E$ satisfies
\begin{equation*}
 \checkW E=2E,
 \qquad
 \checkD E=0.
\end{equation*}
Also, $A\mapsto\uri(A,E)$ is an inner derivation.  Since $\checkW$ and $\checkD$ are uri derivations,
\begin{equation}\label{eq:inner-relations}
 [\checkW,\uri(-,E)]=2\uri(-,E),
 \qquad
 [\checkD,\uri(-,E)]=0.
\end{equation}
Equations \eqref{eq:D-Q}, \eqref{eq:W-D-Q}, and \eqref{eq:inner-relations} prove \eqref{eq:sl2-relations}.

The operator $\checkW$ preserves $\mathfrak B$ by homogeneity.  On the coefficient side, $\checkD$ is precomposition with the quasi-shuffle derivation determined on letters by
\[
 d(y_{k,m})=k y_{k+1,m+1}
\]
\cite{BIM}*{Definition~4.8 and Proposition~4.9}. If the coefficient map of $A$ vanishes on products of nonempty words, then its composition with $d$ has the same property because
\[
 d(u*v)=d(u)*v+u*d(v).
\]
This proves that $\checkD$ preserves alternility.  Under swap, the variables become
\[
 U_j=Y_1+\cdots+Y_{r-j+1},
 \qquad
 V_j=X_{r-j+1}-X_{r-j+2},
\]
where $X_{r+1}=0$.  Therefore
\[
 \partial_{X_i}
 =\partial_{V_{r-i+1}}-\partial_{V_{r-i+2}},
 \qquad
 \partial_{Y_i}=\sum_{j=1}^{r-i+1}\partial_{U_j},
\]
with $\partial_{V_{r+1}}=0$.  Summing and telescoping gives
\[
 \sum_{i=1}^r\partial_{X_i}\partial_{Y_i}
 =\sum_{j=1}^r\partial_{U_j}\partial_{V_j}.
\]
It follows that $\checkD$ commutes with swap.  Polynomiality and the depth-one parity are immediate.  The weight shifts of $\checkW$ and $\checkD$ follow from their definitions.
\end{proof}

\section{Comparison with the explicit operators}

Write $X=(X_1,\ldots,X_r)$ and $Y=(Y_1,\ldots,Y_r)$.  For $1\leq i\leq r$, let $p_i(X)$ be the tuple obtained by deleting $X_i$.  Put
\begin{align*}
 c_{i,i+1}(Y_1,\ldots,Y_r)
 &= (Y_1,\ldots,Y_{i-1},Y_i+Y_{i+1},Y_{i+2},\ldots,Y_r)
 &&(i<r),\\
 c_{r,r+1}(Y_1,\ldots,Y_r)
 &= (Y_1,\ldots,Y_{r-1}),\\
 c_{i-1,i}(Y_1,\ldots,Y_r)
 &= (Y_1,\ldots,Y_{i-2},Y_{i-1}+Y_i,Y_{i+1},\ldots,Y_r)
 &&(2\leq i\leq r).
\end{align*}
The contraction operators are
\begin{align*}
 (\varphi_i^+A)_r(X\mid Y)
 &=A_{r-1}\bigl(p_i(X)\mid c_{i,i+1}(Y)\bigr)
 &&(1\leq i\leq r),\\
 (\varphi_i^-A)_r(X\mid Y)
 &=A_{r-1}\bigl(p_i(X)\mid c_{i-1,i}(Y)\bigr)
 &&(2\leq i\leq r),
\end{align*}
and are zero outside the indicated ranges.  Set $(\delta_{\mathrm{BvI}}A)_0=0$.  With $X_{r+1}=0$, define its positive-depth components by
\begin{align}
 (\delta_{\mathrm{BvI}}A)_r
 ={}&(QA)_r
 -\frac12\sum_{i=1}^r
 (X_i-X_{i+1}+Y_i)(\varphi_i^+A)_r\notag\\
 &-\frac12\sum_{i=2}^r
 (X_{i-1}-X_i+Y_i)(\varphi_i^-A)_r\notag\\
 &+\frac14\sum_{i=1}^{r-1}((\varphi_i^+)^2A)_r
 -\frac14\sum_{i=2}^{r-1}((\varphi_i^-)^2A)_r.
 \label{eq:BvI-definition}
\end{align}
This is the bimould operator of \cite{BIM}*{Definition~4.10}.  Its coefficient map is precomposed with the quasi-shuffle derivation described in \cite{BIM}*{Proposition~4.11}.  Directly from \eqref{eq:BvI-definition},
\begin{equation*}
 E=\delta_{\mathrm{BvI}}(\one).
\end{equation*}

To define preuri, let
\[
 \mathcal D_r(F\mid z_0,\ldots,z_r)
 =\sum_{s=0}^r\frac{F(z_s)}{\prod_{k\ne s}(z_k-z_s)}.
\]
Put $C(B)_0=C(B)_1=0$ and, for $m\geq2$, put
\begin{equation}\label{eq:C-definition}
 C(B)_m=\sum_{r=1}^{m-1}\bigl(L_{m,r}(B)-R_{m,r}(B)\bigr),
\end{equation}
where
\begin{align}
 L_{m,r}(B)
 &=\mathcal D_r\left(
 t\longmapsto
 B_{m-r}\bi{X_r-t,\ldots,X_{m-1}-t}
 {Y_r,\ldots,Y_{m-1}}
 \mathrel\big|0,X_1,\ldots,X_{r-1},X_m
 \right),\label{eq:Lmr}\\
 R_{m,r}(B)
 &=\mathcal D_r\left(
 t\longmapsto
 B_{m-r}\bi{X_{r+1}-t,\ldots,X_m-t}
 {Y_{r+1},\ldots,Y_m}
 \mathrel\big|0,X_1,\ldots,X_r
 \right).\label{eq:Rmr}
\end{align}
Similar divided-difference sums, in which a bimould $C(B)$ is inserted after a position $i$ and evaluated at $X_{i+1}-X_i,\ldots,X_{i+m}-X_i$ and $Y_{i+1},\ldots,Y_{i+m}$, are studied in \cite{BKS}.  These are the arguments produced by anit. Define
\begin{equation}\label{eq:preuri-definition}
 \urit(B)=\arit(B)+\anit(C(B)),
 \qquad
 \preuri(A,B)=\urit(B)(A)+\muop(A,B).
\end{equation}
A similar product, called preuri, is studied in \cite{BKS}.  The arguments of $\preuri(A,B)$ are reversed from Burmester's convention used in \cite{BKa}*{Section~8}, i.e.
\begin{equation*}
 \preuri(A,B)=\operatorname{preuri}_{\mathrm{Bu}}(B,A).
\end{equation*}
Notice that $C(E)=0$.  For $m=2$, one has $L_{2,1}(E)=R_{2,1}(E)=-\frac12$.  When $m\geq3$, the only possible terms in \eqref{eq:C-definition} are divided differences of positive order of the constant $E_2$ or of order at least two of the affine polynomial $E_1$, and hence they vanish.  Therefore
\[
 \urit(E)=\arit(E),\qquad
 \preuri(A,E)=\preari(A,E).
\]

\begin{prop}\label{prop:preuri-comparison}
For $A,B\in\BARI^{\mathrm{pol}}$, one has
\begin{equation}\label{eq:preuri-comparison}
 \ganit_{\pic}\Bigl(
 \preari(\ganit_{\poc}(A),\ganit_{\poc}(B))
 \Bigr)
 =\preuri(A,B).
\end{equation}
In particular,
\begin{equation}\label{eq:uri-preuri}
 \uri(A,B)=\preuri(A,B)-\preuri(B,A).
\end{equation}
\end{prop}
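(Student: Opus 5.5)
The second claim is immediate from the first: $\uri(A,B)=T(\ari(T^{-1}A,T^{-1}B))$ with $T=\ganit_{\pic}$, and $\ari$ is the commutator of $\preari$, so antisymmetrizing \eqref{eq:preuri-comparison} gives \eqref{eq:uri-preuri}. Everything therefore rests on \eqref{eq:preuri-comparison}.

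To prove it, I will split $\preari(A',B')=\arit(B')(A')+\muop(A',B')$ with $A'=\ganit_{\poc}(A)$ and $B'=\ganit_{\poc}(B)$, and treat the two summands separately. For the $\muop$ part, multiplicativity of gaxit \cite{Kaw1}*{Proposition~3.12} gives $\ganit_{\pic}\muop(A',B')=\muop(A,B)$, since $\ganit_{\pic}\ganit_{\poc}=\operatorname{id}$. It then remains to prove
\[
 \ganit_{\pic}\circ\arit(\ganit_{\poc}B)\circ\ganit_{\poc}
 =\arit(B)+\anit(C(B)).
\]

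For the conjugated derivation, I will use the GAXI composition law of \cite{Kaw1}. Differentiating the gaxit composition law along $\one+\varepsilon B'$ expresses the conjugate of $\axit(B',-B')$ by $\ganit_C$ as another $\axit$ derivation. Concretely, I expect a formula of the shape
\[
 \ganit_C\circ\arit(B')\circ\ganit_C^{-1}
 =\axit\bigl(\ganit_C B',\,-\ganit_C B'+\Delta_C(B')\bigr),
\]
where $\Delta_C(B')$ is built from $\arit(B')(C)$ and the flexion units of $C$. With $C=\pic$ and $\ganit_{\pic}B'=B$, the left-hand side becomes $\amit(B)-\anit(B)+\anit(\Delta)=\arit(B)+\anit(\Delta)$. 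The identity then reduces to the single bimould identity $\Delta=C(B)$. Equivalently, I will show $\Delta=\invmu(\pic)\cdot\arit(B')(\pic)$ up to the precise left/right convention; this is the infinitesimal version of the gaxit composition rule.

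The main obstacle is this final step: computing $\arit(\ganit_{\poc}B)(\pic)$, multiplied by $\invmu(\pic)$, explicitly in depth $m$ and matching it with the divided differences $L_{m,r}-R_{m,r}$. My approach is as follows. The flexions of $\pic$ are products of factors $1/(X_i-t)$. Partial fractions such as $\prod_{k}1/(z_k-t)$ then yield exactly the sums $\mathcal D_r$ over the nodes $0,X_1,\ldots$. The left and right flexion parts of $\arit$ should produce $L_{m,r}$ and $R_{m,r}$, with nodes $\{0,X_1,\ldots,X_{r-1},X_m\}$ and $\{0,X_1,\ldots,X_r\}$ respectively.

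Polynomiality of $B$ is what makes everything land in $\BARI^{\mathrm{pol}}$-compatible form: the divided differences of polynomials are polynomial, so the poles cancel. I will verify the matching first in depths $2$ and $3$, where $C(B)_2=L_{2,1}-R_{2,1}$, to fix signs. After that, I will do induction on depth using the recursive structure of $\ganit$ as a sum over first-letter blocks.
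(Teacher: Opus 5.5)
Your reduction is sound and is the one the paper makes. The $\muop$ term is handled by multiplicativity of $\ganit_{\pic}$. The GAXI composition law shows that $\ganit_{\pic}\circ\arit(\ganit_{\poc}B)\circ\ganit_{\poc}=\axit(B,\lambda)$ for some $\lambda$, so everything reduces to $\lambda=C(B)-B$, i.e.\ your $\Delta=C(B)$. But that identification is the entire content of the proposition, and your proposal does not prove it. Moreover, the one concrete target you name for it is false.

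Take $B$ concentrated in depth one with $B_1=b$, so that $B'_1=b$. Since $\arit(B')(\pic)_1=0$, multiplying by $\invmu(\pic)$ on either side changes nothing in depth two, and
\[
 \arit(B')(\pic)_2=\frac{b(X_1-X_2,Y_1)}{X_2}-\frac{b(X_2-X_1,Y_2)}{X_1},
\]
whereas
\[
 C(B)_2=\frac{b(X_1,Y_1)-b(X_1-X_2,Y_1)}{X_2}-\frac{b(X_2,Y_2)-b(X_2-X_1,Y_2)}{X_1}
 =\muop(B,\pic)_2-\muop(\pic,B)_2-\arit(B')(\pic)_2 .
\]
So the discrepancy is not a sign or a left/right convention: the concatenation terms are missing. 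Your depth-two check would refute the formula rather than fix signs, and the planned induction over first-letter blocks has no correct identity to propagate.

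What the composition law actually gives is the implicit equation
\[
 \muop(\lambda,\pic)+\axit(B,\lambda)(\pic)=-\muop(\pic,B).
\]
This is exactly \eqref{eq:C-prime} with $\mathcal P$ replaced by $\pic$ and $S$ by $B$, and $\lambda$ also occurs inside $\anit(\lambda)(\pic)$. Two further ideas are needed.

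\emph{First step.} Write $\pic=\invmu(\one-P)$, where $P$ is concentrated in depth one with $P_1=1/X_1$, and use that $\axit(B,\lambda)$ is a $\muop$-derivation. Multiplying the equation on both sides by $\one-P$ gives
\[
 \lambda-\muop(P,\lambda)+\amit(B)(P)+\anit(\lambda)(P)=-B+\muop(B,P).
\]
Its depth-$m$ component is exactly the first-order recursion \eqref{eq:lambda-recursion}.

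\emph{Second step.} Show that $C(B)$ satisfies the matching recursion \eqref{eq:C-recursion}. By \eqref{eq:divided-difference}, splitting off the node $X_1$ expresses $L_{m,r}$ and $R_{m,r}$ for $r\geq2$ through $L_{m-1,r-1}$ and $R_{m-1,r-1}$, evaluated at $(X_2,\ldots,X_m)$ and at $(X_2-X_1,\ldots,X_m-X_1)$. The $r=1$ terms are the difference quotients \eqref{eq:C-r-one}. Induction on depth then gives $C(B)=B+\lambda(B)$.

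The paper proves the second step in this way. It obtains the conjugation identity with this $\lambda$ by linearizing the garit--gaxit identity of \cite{BKa}*{Proposition~5.1}. Your partial-fraction heuristic is aimed at the second step, but without the recursion for $\lambda$ it has nothing correct to match.
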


\begin{proof}
Write
\begin{align*}
 F^{\mathrm t}
 &=F_{m-1}\bi{X_2,\ldots,X_m}{Y_2,\ldots,Y_m},&
 F^{\mathrm{t},1}
 &=F_{m-1}\bi{X_2-X_1,\ldots,X_m-X_1}{Y_2,\ldots,Y_m},\\
 F^{\mathrm h}
 &=F_{m-1}\bi{X_1,\ldots,X_{m-1}}{Y_1,\ldots,Y_{m-1}},&
 F^{\mathrm{h},m}
 &=F_{m-1}\bi{X_1-X_m,\ldots,X_{m-1}-X_m}
 {Y_1,\ldots,Y_{m-1}}.
\end{align*}
Define $\lambda(B)$ recursively by
\begin{equation}\label{eq:lambda-recursion}
 \lambda(B)_0=0,
 \qquad
 \lambda(B)_1=-B_1,
 \qquad
 \lambda(B)_m=-B_m
 +\frac{\lambda(B)^{\mathrm t}-\lambda(B)^{\mathrm{t},1}}{X_1}
 +\frac{B^{\mathrm h}-B^{\mathrm{h},m}}{X_m}.
\end{equation}

The operator $\mathcal D_r$ is symmetric in its nodes and satisfies
\begin{equation}\label{eq:divided-difference}
 \mathcal D_r(F\mid z_0,\ldots,z_r)
 =\frac{
 \mathcal D_{r-1}(F\mid z_1,\ldots,z_r)
 -\mathcal D_{r-1}(F\mid z_0,\ldots,z_{r-1})}
 {z_0-z_r}.
\end{equation}
For $r=1$, equations \eqref{eq:Lmr} and \eqref{eq:Rmr} give
\begin{equation}\label{eq:C-r-one}
 L_{m,1}(B)=\frac{B^{\mathrm h}-B^{\mathrm{h},m}}{X_m},
 \qquad
 R_{m,1}(B)=\frac{B^{\mathrm t}-B^{\mathrm{t},1}}{X_1}.
\end{equation}
When $r\geq2$, compare $L_{m-1,r-1}$ evaluated in the variables $X_2,\ldots,X_m$ and $Y_2,\ldots,Y_m$ with its value in the variables $X_2-X_1,\ldots,X_m-X_1$ and $Y_2,\ldots,Y_m$.  The two node sets are
\[
 (0,X_2,\ldots,X_{r-1},X_m)
 \quad\text{and}\quad
 (X_1,X_2,\ldots,X_{r-1},X_m).
\]
In the second set, both the nodes and the variable $t$ have been translated by $X_1$.  Order their union as $(0,X_2,\ldots,X_{r-1},X_m,X_1)$ and apply \eqref{eq:divided-difference}.  This gives
\begin{equation*}
 L_{m,r}(B)
 =\frac{L_{m-1,r-1}(B)^{\mathrm t}
       -L_{m-1,r-1}(B)^{\mathrm{t},1}}{X_1}.
\end{equation*}
The same argument for $R_{m-1,r-1}$, using the node sets $(0,X_2,\ldots,X_r)$ and $(X_1,X_2,\ldots,X_r)$, gives
\begin{equation*}
 R_{m,r}(B)
 =\frac{R_{m-1,r-1}(B)^{\mathrm t}
       -R_{m-1,r-1}(B)^{\mathrm{t},1}}{X_1}.
\end{equation*}
Here the superscripts on $L$ and $R$ mean evaluation in the two sets of variables above, as for $F^{\mathrm t}$ and $F^{\mathrm{t},1}$.  Summing these identities for $2\leq r\leq m-1$ and adding \eqref{eq:C-r-one} yields
\begin{equation}\label{eq:C-recursion}
 C(B)_1=0,
 \qquad
 C(B)_m
 =\frac{C(B)^{\mathrm t}-C(B)^{\mathrm{t},1}}{X_1}
 +\frac{B^{\mathrm{t},1}-B^{\mathrm t}}{X_1}
 +\frac{B^{\mathrm h}-B^{\mathrm{h},m}}{X_m}.
\end{equation}
Comparing \eqref{eq:lambda-recursion} and \eqref{eq:C-recursion} gives by induction
\begin{equation}\label{eq:C-lambda}
 C(B)=B+\lambda(B).
\end{equation}

Equations \eqref{eq:preuri-definition} and \eqref{eq:C-lambda} give
\begin{equation*}
 \urit(B)
 =\arit(B)+\anit(C(B))
 =\axit(B,\lambda(B)).
\end{equation*}
The recursion \eqref{eq:lambda-recursion} is the linearization at the unit of the auxiliary bimould of \cite{BKa}*{Definition~4.1}.  The resulting formula \eqref{eq:uri-preuri} was conjectured by Burmester in her thesis \cite{Bu1}*{Conjecture~5.16} and verified by her in depths at most six in \cite{Bu1}*{Proposition~5.17(i)}.  Linearizing the garit--gaxit conjugation identity in \cite{BKa}*{Proposition~5.1} gives
\begin{equation*}
 \ganit_{\pic}\circ
 \arit(\ganit_{\poc}(B))\circ\ganit_{\poc}
 =\axit(B,\lambda(B))
 =\urit(B).
\end{equation*}
Since $\ganit_{\pic}$ is multiplicative for $\muop$, \eqref{eq:preuri-comparison} follows.  Antisymmetrizing gives \eqref{eq:uri-preuri}.
\end{proof}

\cref{prop:preuri-comparison} can be seen as the polynomial form of a similar statement conjectured in \cite{BKS}.

\begin{prop}\label{prop:delta-comparison}
Put
\[
 \delta_0=\ganit_{\pic}Q\ganit_{\poc}.
\]
For every $A\in\BARI^{\mathrm{pol}}$, one has
\begin{align}
 \delta_0(A)
 &=\delta_{\mathrm{BvI}}(A)-\preuri(A,E),
 \label{eq:bare-delta}\\
 \checkdelta(A)
 &=\delta_{\mathrm{BvI}}(A)-\preuri(E,A).
 \label{eq:delta-comparison}
\end{align}
\end{prop}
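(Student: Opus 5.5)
Equation \eqref{eq:delta-comparison} follows from \eqref{eq:bare-delta}. By definition $\checkdelta(A)=\delta_0(A)+\uri(A,E)$, and by \eqref{eq:uri-preuri} we have $\uri(A,E)=\preuri(A,E)-\preuri(E,A)$. Substituting \eqref{eq:bare-delta}, the two terms $\pm\preuri(A,E)$ cancel, which leaves $\delta_{\mathrm{BvI}}(A)-\preuri(E,A)$. The proof therefore reduces to \eqref{eq:bare-delta}.

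For \eqref{eq:bare-delta} we first observe that $\delta_0(A)$ and $\delta_{\mathrm{BvI}}(A)$ are built from the same pieces. On the ari side, $Q$ is a $\muop$-derivation: it is multiplication by $\sum X_iY_i$, and this sum splits additively at every concatenation cut. Since $\ganit_{\pic}$ and $\ganit_{\poc}$ are $\muop$-automorphisms, $\delta_0$ is a $\muop$-derivation as well. The plan is to compute $\delta_0$ by conjugating $Q$ blockwise through ganit. In $\ganit_C(M)$ each block $[p,q]$ has its marked letter first, and the argument pair is $x_0=X_p$, $y_0=Y_{p:q}$. By the identity for $\sum X_jY_j$ in the proof of \cref{prop:ambient-triple},
\[
Q\,\ganit_C(M)=\ganit_C(QM)+\ganit_{C}^{\,\prime}(M),
\]
where $\ganit_C'(M)$ inserts the factor $\sum_{j\ne p}(X_j-X_p)Y_j$ multiplied by the $C$-factor of each block. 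That is, $\ganit_{\pic}\circ Q\circ\ganit_{\poc}=Q-\anit$-type correction, where the correction is a derivation of $\ganit$ in the direction $QC$. Linearizing the ganit composition law, this correction should equal $-\anit(\widetilde E)$, with $\widetilde E:=\ganit_{\pic}$ applied to the linearized quantity $Q\pic$ transported. It is therefore of the form $\axit(0,F)$ for an explicit bimould $F$ computed from $\pic,\poc$ and $Q$.

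The key claim is that this correction equals $\delta_{\mathrm{BvI}}-Q-\urit(E)=\delta_{\mathrm{BvI}}-Q-\arit(E)$ on $\BARI^{\mathrm{pol}}$. To get it, I would use that $\delta_{\mathrm{BvI}}$ is a $\muop$-derivation. This holds because its coefficient map is precomposition with a quasi-shuffle derivation, and equivalently because the contraction terms in \eqref{eq:BvI-definition} are local at adjacent letters. Then $\delta_{\mathrm{BvI}}-\preuri(-,E)$ and $\delta_0$ are both $\muop$-derivations of the form $Q+\axit(\cdot,\cdot)$. Both are determined by their effect in the shape ``flexion units acting on single letters'', so it suffices to match the contraction terms $\varphi_i^\pm$ and $(\varphi_i^\pm)^2$. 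On the BvI side these terms appear with polynomial coefficients $-\tfrac12(X_i-X_{i+1}+Y_i)$ and $\tfrac14$. On the other side, $\arit(E)$ contributes $E_1,E_2$ inserted into neighbouring blocks with flexed arguments, and $-\muop(A,E)$ removes the terminal concatenation. On polynomial inputs, the poles of $\pic$ and $\poc$ must cancel after conjugation, and the surviving terms are divided differences of polynomials. These are exactly the $\mathcal D_r$-type expressions already controlled in \cref{prop:preuri-comparison}, where $\urit(B)=\axit(B,\lambda(B))$.

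The main obstacle is this pole cancellation and identification: showing that $\ganit_{\pic}Q\ganit_{\poc}-Q$ collapses, on polynomial bimoulds, to only depth-one and depth-two contraction terms. I would handle it by induction on depth, using the tail recursion $F^{\mathrm t}, F^{\mathrm{t},1}$ and \eqref{eq:divided-difference} as in \cref{prop:preuri-comparison}. In that recursion a degree-two multiplier $X_jY_j$ kills all divided differences of order at least three. The check is then that the order-one and order-two remainders reproduce the coefficients $-\tfrac12$ and $\tfrac14$ of \eqref{eq:BvI-definition}, consistent with the normalization $E=\delta_{\mathrm{BvI}}(\one)$. As a sanity check, we evaluate at $A=\one$. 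Both derivations kill the unit in the appropriate sense: $\delta_0(\one)=0$ and $\preuri(\one,E)=E$ up to $A_0$ conventions, which matches $\delta_{\mathrm{BvI}}(\one)=E$.
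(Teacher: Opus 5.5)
Your reduction of \eqref{eq:delta-comparison} to \eqref{eq:bare-delta} via \eqref{eq:uri-preuri} is correct. So is the overall shape: both sides of \eqref{eq:bare-delta} are $\muop$-derivations, so compare them on bimoulds concentrated in depth one.

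\textbf{The gap: why $L=\delta_{\mathrm{BvI}}-\preuri(-,E)$ is a $\muop$-derivation.} You justify this by asserting that $\delta_{\mathrm{BvI}}$ itself is a $\muop$-derivation, and that is false. Every $\muop$-derivation kills $\one$, whereas $\delta_{\mathrm{BvI}}(\one)=E\neq0$, as your own sanity check records. Precomposition with a quasi-shuffle derivation expresses compatibility with the bi-stuffle product; that is why $\delta_{\mathrm{BvI}}$ preserves alternility. It says nothing about $\muop$, whose coefficient map is dual to deconcatenation. Concretely, the boundary term $i=r$ of the $\varphi_i^+$-sum (where $X_{r+1}=0$) together with the term $(\varphi_{r-1}^+)^2$ is exactly $\muop(A,E)_r$. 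So $\delta_{\mathrm{BvI}}$ contains right multiplication by $E$ as a summand, and that is not a derivation. What is true, and what the paper proves by comparing the index ranges of $\varphi^\pm$ and $(\varphi^\pm)^2$ before and after concatenation, is the twisted rule
\[
 \delta_{\mathrm{BvI}}\muop(A,B)=\muop(\delta_{\mathrm{BvI}}A,B)+\muop(A,\delta_{\mathrm{BvI}}B)-\muop(A,E,B).
\]
The operator $\preuri(-,E)=\urit(E)+\muop(-,E)$ has the same defect $-\muop(A,E,B)$, because $\urit(E)$ is a derivation. Only their difference $L$ is a derivation. If $\delta_{\mathrm{BvI}}$ were a derivation, $L$ would not be one, so your argument is inconsistent exactly at the step that carries the content.

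\textbf{The ``key claim'' is also false.} Your claim $\delta_0-Q=\delta_{\mathrm{BvI}}-Q-\arit(E)$ omits the term $\muop(-,E)$ of $\preuri(-,E)$ and already fails in depth two. For $A$ concentrated in depth one with $A_1=p$, the boundary contraction of $\delta_{\mathrm{BvI}}$ contributes $-\frac12(X_2+Y_2)p(X_1,Y_1)$, and only $\muop(A,E)_2$ cancels it.

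\textbf{The pole-cancellation plan is aimed at the wrong object.} $\delta_0$ carries no coefficients $-\frac12$ or $\frac14$. The block identity $\sum_{k=a}^bX_kY_k-X_aY_{a:b}=\sum_{k=a+1}^b(X_k-X_a)Y_k$ that you quote already finishes the computation. Each factor $X_k-X_a$ cancels one denominator of $\pic$, and the surviving block decompositions are those of the word with position $k$ removed and $Y_k$ merged into $Y_{k-1}$. This gives $(\delta_0A)_r=(\sum_iX_iY_i)A_r-\sum_{i=2}^rY_i(\varphi_i^-A)_r$, with no divided differences.

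\textbf{A direct fix, bypassing the derivation argument.} Once you have this formula, expand $\arit(E)(A)+\muop(A,E)$, recalling $C(E)=0$. It reproduces every contraction term of \eqref{eq:BvI-definition}, including the boundary ones. The only difference is that the coefficient of $\varphi_i^-$ is $-\frac12(X_{i-1}-X_i-Y_i)$ instead of $-\frac12(X_{i-1}-X_i+Y_i)$. Subtracting yields \eqref{eq:bare-delta} directly. The paper instead combines the twisted rule with a finite check on depth-one generators in depths two and three.
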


\begin{proof}
The map $\delta_0$ is a derivation for $\muop$.  Its marked-block expansion gives
\begin{equation}\label{eq:delta-zero-explicit}
 (\delta_0A)_r
 =\left(\sum_{i=1}^rX_iY_i\right)A_r
  -\sum_{i=2}^rY_i(\varphi_i^-A)_r.
\end{equation}
To prove \eqref{eq:delta-zero-explicit}, observe that in a block $[a,b]$ of $\ganit_{\pic}$,
\[
 \sum_{k=a}^bX_kY_k-X_a\sum_{k=a}^bY_k
 =\sum_{k=a+1}^b(X_k-X_a)Y_k.
\]
Each factor $X_k-X_a$ cancels the corresponding denominator in $\pic$, which gives the contraction in \eqref{eq:delta-zero-explicit}.  After this cancellation, deleting the position $k$ and keeping the marked block decomposition gives exactly the terms in $(\varphi_k^-A)_r$.

The contraction formula of \cite{BIM}*{Definition~4.10} gives
\begin{equation}\label{eq:BvI-product}
 \delta_{\mathrm{BvI}}\muop(A,B)
 =\muop(\delta_{\mathrm{BvI}}A,B)
  +\muop(A,\delta_{\mathrm{BvI}}B)
  -\muop(A,E,B).
\end{equation}
We give a short proof of the last term.  Fix components $A_a$ and $B_b$.  For the single contractions the output depth is $a+b+1$.  Using local indices on the two factors, the ranges on the two sides are
\[
\begin{array}{c|c|c}
 &\delta_{\mathrm{BvI}}\muop(A,B)
 &\muop(\delta_{\mathrm{BvI}}A,B)
   +\muop(A,\delta_{\mathrm{BvI}}B)\\ \hline
 \varphi^+&A:1\leq i\leq a,\quad B:1\leq j\leq b+1
 &A:1\leq i\leq a+1,\quad B:1\leq j\leq b+1\\
 \varphi^-&A:2\leq i\leq a+1,\quad B:2\leq j\leq b+1
 &A:2\leq i\leq a+1,\quad B:2\leq j\leq b+1.
\end{array}
\]
A $B$-index $j$ is the global index $a+j$.  Every $\varphi^-$-term therefore occurs on both sides.  The $\varphi^+$-terms also agree except for one.  If $\delta_{\mathrm{BvI}}$ is applied to $A$ before concatenation, it contains the operation which deletes the last pair of $A$.  At the same position after concatenation, the operation starts in the $B$-block.  The extra term on the right is
\[
 -\frac{X_{a+1}+Y_{a+1}}2
 A_a(w_1,\ldots,w_a)B_b(w_{a+2},\ldots,w_{a+b+1}).
\]
In the local indexing of $A$, the next upper variable is the boundary value $0$.  Moving this term to the left gives the insertion of $-E_1$.  For the double contractions the output depth is $a+b+2$. In the case of $(\varphi_i^+)^2$, the range $1\leq i\leq a+b+1$ on the left is compared with the two ranges $1\leq i\leq a+1$ and $a+1\leq i\leq a+b+1$ on the right.  Their only overlap is $i=a+1$.  For $(\varphi_i^-)^2$, the ranges $2\leq i\leq a+1$ and $a+2\leq i\leq a+b+1$ partition the range on the left.  Thus the only extra double contraction on the right twice deletes the last pair of $A$, with coefficient $1/4$.  Moving it to the left gives the insertion of $-E_2$.  The multiplication term $\sum_iX_iY_i$ splits at the concatenation cut.  Summing over $a$ and $b$ proves \eqref{eq:BvI-product}.  The same calculation includes $a=0$ or $b=0$ with the empty index ranges omitted.

Since $\urit(E)$ is a derivation for $\muop$, the map $R_{E}(A)=\preuri(A,E)$ satisfies
\begin{equation*}
 R_{E}\muop(A,B)
 =\muop(R_{E}A,B)+\muop(A,R_{E}B)-\muop(A,E,B).
\end{equation*}
It follows that $L=\delta_{\mathrm{BvI}}-\preuri(-,E)$ is a derivation for $\muop$.

For a polynomial $p(X,Y)$, let $P(p)$ be concentrated in depth one with $P(p)_1=p$, and put $S_j=Y_1+\cdots+Y_j$.  In depth two, direct substitution gives
\begin{align*}
 (\delta_{\mathrm{BvI}}P(p))_2
 ={}&-\frac12(X_1-X_2+Y_1)p(X_2,S_2)
     -\frac12(X_2+Y_2)p(X_1,Y_1)\\
 &-\frac12(X_1-X_2+Y_2)p(X_1,S_2),\\
 (\preuri(P(p),E))_2
 ={}&-\frac12(X_1-X_2+Y_1)p(X_2,S_2)
     -\frac12(X_2+Y_2)p(X_1,Y_1)\\
 &-\frac12(X_1-X_2-Y_2)p(X_1,S_2).
\end{align*}
The only difference is $-Y_2p(X_1,S_2)$.  In depth three, both expressions are equal to
\begin{equation*}
 \frac14\bigl(
 p(X_3,S_3)+p(X_1,Y_1)-p(X_1,S_3)
 \bigr).
\end{equation*}
Both vanish in depths at least four.  For $\delta_{\mathrm{BvI}}$ this follows from its contraction formula.  In the preuri case, it follows from $C(E)=0$ and the depth supports of $P(p)$ and $E$.  Together with the depth-one component, this gives
\begin{equation}\label{eq:generator-comparison}
 L(P(p))=\delta_0(P(p)),
\end{equation}
where both sides have depth-one component $X_1Y_1p(X_1,Y_1)$, depth-two component $-Y_2p(X_1,Y_1+Y_2)$ and vanish in all other depths. For a fixed output depth $r$, truncate $A$ above depth $r$.  Each component of this truncation is a finite sum of products $\muop(P(p_1),\ldots,P(p_d))$.  Both sides of \eqref{eq:bare-delta} at depth $r$ depend only on this truncation.  Their derivation property proves \eqref{eq:bare-delta}. Finally, \cref{prop:preuri-comparison} gives
\begin{align*}
 \checkdelta(A)
 &=\delta_0(A)+\uri(A,E)\\
 &=\delta_{\mathrm{BvI}}(A)-\preuri(A,E)
   +\preuri(A,E)-\preuri(E,A),
\end{align*}
which is \eqref{eq:delta-comparison}.
\end{proof}

\section{Swap invariance}

Let
\begin{equation*}
 \mathfrak E(u,v)=\frac1u+\frac1v,
 \qquad
 \mathcal P=\invmu(\one-\mathfrak E),
 \qquad
 \mathcal P_r=\prod_{i=1}^r
 \left(\frac1{X_i}+\frac1{Y_i}\right).
\end{equation*}
The flexion unit $\mathfrak E$ is denoted by $E$ in \cite{BKa}*{Section~7}.  It should not be confused with the bimould $E$ in \eqref{eq:E-definition}. The bimould $\mathfrak E$ is concentrated in depth one and is odd, symmetric and satisfies the tripartite identity
\[
 \mathfrak E(u_1,v_1)\mathfrak E(u_2,v_2)
 =\mathfrak E(u_1+u_2,v_1)\mathfrak E(u_2,v_2-v_1)
 +\mathfrak E(u_1+u_2,v_2)\mathfrak E(u_1,v_1-v_2).
\]
Therefore, it is a flexion unit equal to its conjugate unit.  In this section we allow more poles than in $\cL_r$ and work in the ring
\begin{equation}\label{eq:more-poles}
 \widehat{\cL}_r(\QQ):=\QQ[[X_1,Y_1,\ldots,X_r,Y_r]]
 \left[\frac1\ell\ \middle|\
 0\ne\ell\in\QQ X_1+\cdots+\QQ X_r+
                    \QQ Y_1+\cdots+\QQ Y_r\right],
\end{equation}
in which every nonzero rational linear form in the variables is inverted. These are the rings used in \cite{BKa}*{Section~7}.  They form a family of functions in the sense of Furusho, Hirose and Komiyama \cite{FHK}*{Definition~1}, as explained in \cite{BKa}*{Remark~7.3}.  In particular, the flexion operations and the depthwise constructions used below are defined over $\widehat{\cL}(\QQ)$.

Since $\mathfrak E$ equals its conjugate unit, the primary bimould of the conjugate unit is $\mathcal P$.  A bimould $H$ is therefore $\mathfrak E$-alternal if $\ganit_{\mathcal P}^{-1}(H)$ is alternal \cite{Kaw2}*{Definition~2.2}.  In the present symmetric case, Kawamura's dimorphic space is the following, where $\ARI$ is taken with components in \eqref{eq:more-poles}:
\begin{equation*}
 \ARI_{\underline{\mathrm{al}}/\underline{\mathfrak{ol}}}
 =\left\{
 C\in\ARI\ \middle|\
 C\text{ is alternal},\quad
 \swapop(C)\text{ is $\mathfrak E$-alternal},\quad
 C_1\bi{-X_1}{-Y_1}=C_1\bi{X_1}{Y_1}
 \right\}.
\end{equation*}
This is the symmetric specialization of the dimorphic spaces in \cite{Kaw2}*{Section~3}.  By \cite{BKa}*{Remark~7.3}, the proofs of \cite{Kaw2}*{Theorem~4.2, Proposition~5.1, Lemma~5.2 and Corollary~5.7} remain valid over $\widehat{\cL}(\QQ)$.

\begin{lem}\label{lem:fixed-locus}
Let $A$ be alternil and put $C=\ganit_{\poc}(A)$.  Then
\begin{equation}\label{eq:fixed-locus}
 \swapop(A)=A
 \quad\Longleftrightarrow\quad
 \swapop(C)=\ganit_{\mathcal P}(C).
\end{equation}
If these conditions hold and $A_1$ is even, then $C$ belongs to Kawamura's dimorphic space $\ARI_{\underline{\mathrm{al}}/\underline{\mathfrak{ol}}}$ for the unit $\mathfrak E$.
\end{lem}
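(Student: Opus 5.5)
The key input is a conjugation identity relating swap and the two ganit operators, taken from the swap compatibility of flexion units. Concretely, I would first establish
\[
 \swapop\circ\ganit_{\poc}\;=\;\ganit_{\mathcal P}\circ\ganit_{\poc}\circ\swapop
\]
as an identity of operators on bimoulds (over $\widehat{\cL}(\QQ)$). This is the unit-$\mathfrak E$ analogue of Écalle's classical relation $\swapop\circ\ganit_{\pic}^{-1}=\ganit_{\mathcal P}\circ\ganit_{\pic}^{-1}\circ\swapop$ used to relate alternility of $A$ with alternality of $C$ and its swap. I expect it to follow from the general swap–gaxit intertwining, \cite{Kaw1}*{Section~5} and \cite{BKa}*{Section~7}, specialised to $\mathfrak E$. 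Since $\mathfrak E$ is its own conjugate unit, the primary bimould $\mathcal P$ appearing on the swapped side is the same one on both sides. As a sanity check, I would verify the identity directly in depths one and two from the explicit formulas for $\pic$, $\poc$ and $\mathcal P_r$. In depth one it reads $\swapop$ of $A_1(X_1,Y_1)$ equals itself, since $\ganit$ is the identity there.

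Granting this identity, the equivalence \eqref{eq:fixed-locus} is formal. If $\swapop(A)=A$, then
\[
 \swapop(C)=\swapop\ganit_{\poc}(A)=\ganit_{\mathcal P}\ganit_{\poc}\swapop(A)=\ganit_{\mathcal P}(C).
\]
Conversely, if $\swapop(C)=\ganit_{\mathcal P}(C)$, then $\ganit_{\mathcal P}\ganit_{\poc}(\swapop A)=\ganit_{\mathcal P}\ganit_{\poc}(A)$. Both ganit operators are invertible \cite{Kaw1}*{Proposition~3.13}, so $\swapop A=A$. Alternility of $A$ is not needed for this step, only for the membership statement.

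For the second claim, $C=T^{-1}A$ is alternal because $T$ identifies alternal and alternil bimoulds \cite{Ko}*{Theorem~3.24}. Next, $\ganit_{\mathcal P}^{-1}(\swapop C)=C$ is alternal, so $\swapop(C)$ is $\mathfrak E$-alternal by \cite{Kaw2}*{Definition~2.2}. Finally, $C_1=A_1$ because ganit acts trivially in depth one, so $C_1$ is even. Hence $C\in\ARI_{\underline{\mathrm{al}}/\underline{\mathfrak{ol}}}$.

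The main obstacle is the conjugation identity itself. If no citable version exists for this specific pair ($\poc$ versus $\mathcal P$), I would prove it in two steps. First, swap conjugates $\ganit$ with a flexion unit into a $\gaxit$ built from the swapped unit, as in the swap–ganit relations of \cite{BKa}*{Section~7}. Second, $\swapop(\poc)$ relates to $\pic^{Y}$, and $\pic^{Y}$ combines with $\pic$ into the primary bimould of $\mathfrak E=1/u+1/v$, because $\mathcal P_r$ is the product of the factors $1/X_i+1/Y_i$. The bookkeeping of which of $\pic$ and $\poc$ appears, and of the conjugate unit, is where sign or convention errors are likely.
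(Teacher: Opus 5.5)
Your proposal is correct and is essentially the paper's proof. Your single conjugation identity $\swapop\circ\ganit_{\poc}=\ganit_{\mathcal P}\circ\ganit_{\poc}\circ\swapop$ is exactly what you get by combining the paper's two marked-word identities $\swapop\circ\ganit_{\poc}=\ganit_{\picy}\circ\swapop$ and $\ganit_{\mathcal P}=\ganit_{\picy}\circ\ganit_{\pic}$ with $\ganit_{\pic}\circ\ganit_{\poc}=\mathrm{id}$, which is your proposed two-step derivation; note that the first step gives directly a ganit, namely $\ganit_{\picy}$, rather than a general gaxit. The remaining steps (invertibility, alternality of $C$ via $T$, $\mathfrak E$-alternality of $\swapop(C)$, and the depth-one parity) match the paper.
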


\begin{proof}
The marked-word identities used below are those of \cite{BKa}*{Lemma~7.2 and Equations~(7.3)--(7.4)}. Applying the marked-word formula for ganit to $\poc$, $\picy$ and $\mathcal P$ gives
\[
 \swapop\circ\ganit_{\poc}
 =\ganit_{\picy}\circ\swapop,
 \qquad
 \ganit_{\mathcal P}=\ganit_{\picy}\circ\ganit_{\pic}.
\]
Since $C=\ganit_{\poc}(A)$, these identities give
\begin{equation*}
 \swapop(C)=\ganit_{\picy}(\swapop(A)),
 \qquad
 \ganit_{\mathcal P}(C)=\ganit_{\picy}(A).
\end{equation*}
The equivalence follows because $\ganit_{\picy}$ is invertible. Also, $C$ is alternal by the alternal--alternil transport \cite{Bu1}*{Proposition~5.9}, and \eqref{eq:fixed-locus} gives
\[
 \ganit_{\mathcal P}^{-1}(\swapop(C))=C.
\]
It follows that $\swapop(C)$ is $\mathfrak E$-alternal.  Finally, $\ganit_{\poc}$ is the identity in depth one, so the depth-one parity is preserved.  These are the defining conditions of the dimorphic space above.
\end{proof}

\begin{lem}\label{lem:E-push}
Let $C\in\ARI_{\underline{\mathrm{al}}/ \underline{\mathfrak{ol}}}$, and put
\[
 S=\swapop(C),
 \qquad
 U=\pushop(S).
\]
Then
\begin{equation}\label{eq:one-bimould}
 \muop(U,\mathcal P)+\anit(U)(\mathcal P)
 =\muop(\mathcal P,S)+\amit(S)(\mathcal P).
\end{equation}
\end{lem}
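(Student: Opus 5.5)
The plan is to reduce the identity to Kawamura's $\mathfrak E$-push invariance in \cite{Kaw2}. The cited results there (\cite{Kaw2}*{Theorem~4.2, Proposition~5.1, Lemma~5.2, Corollary~5.7}) state that for $C$ in the dimorphic space, $S=\swapop(C)$ is $\mathfrak E$-alternal and satisfies a push-type identity. Concretely, $\ganit_{\mathcal P}^{-1}(S)$ is alternal, and $S$ is ``$\mathfrak E$-push invariant'' in the sense that a twisted push of $S$ equals $S$. Here the twist is a conjugation by the flexion operations built from $\mathcal P$. I would first rewrite that twisted push invariance as an equality of $\axit$-type operators applied to the primary bimould $\mathcal P$. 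The expected shape is exactly \eqref{eq:one-bimould}: $\anit(U)$ on one side and $\amit(S)$ on the other, corrected by the concatenations $\muop(U,\mathcal P)$ and $\muop(\mathcal P,S)$.

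The key steps, in order, are as follows.

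First, I would recall from \cite{Kaw2}*{Lemma~5.2} the identity relating push and the ``mantar/anti'' operations with $\amit$ and $\anit$ for a unit-type bimould. This is the linearized form of the identity
\[
 \gaxit_{\one,\mathcal P}\circ\pushop=\pushop\circ\gaxit_{\mathcal P,\one}
\]
up to swap, valid because $\mathfrak E$ equals its conjugate unit.

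Second, I would apply \cite{Kaw2}*{Proposition~5.1 and Corollary~5.7}. These convert $\mathfrak E$-alternality of $S$, together with alternality and even depth one of $C$, into $\mathfrak E$-push invariance of $S$. Evenness of $C_1$ is used here exactly as in Kawamura's senary-relation argument.

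Third, I would evaluate both sides on $\mathcal P$. Since $\amit$ and $\anit$ are $\muop$-derivations and $\mathcal P=\invmu(\one-\mathfrak E)$, the terms $\amit(S)(\mathcal P)$ and $\anit(U)(\mathcal P)$ can be written as $\muop(\mathcal P,\amit(S)(\mathfrak E),\mathcal P)$ and $\muop(\mathcal P,\anit(U)(\mathfrak E),\mathcal P)$. The push invariance then becomes an identity on depth-one insertions of $\mathfrak E$. That identity is checked by the tripartite identity and the oddness and symmetry of $\mathfrak E$.

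I expect the main obstacle to be matching conventions: getting the precise form of Kawamura's senary/push relation in his notation, with his argument orders for $\amit$, $\anit$ and his choice of $\mu$ versus $\operatorname{irat}$. The concatenation terms $\muop(U,\mathcal P)$ and $\muop(\mathcal P,S)$ come from the $\muop$ part of $\preari$ versus $\arit$, and sign or side errors there are easy. I would first try verifying \eqref{eq:one-bimould} in depths one and two by direct computation. For example, in depth one it reads
\[
 U_1+\mathcal P_1=\mathcal P_1+S_1,
\]
which holds since $\pushop(S)_1=S_1(-X_1,-Y_1)=S_1$ by evenness. I would use these low-depth checks to fix the orientation. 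I would then invoke the general statement, valid over $\widehat{\cL}(\QQ)$ by \cite{BKa}*{Remark~7.3}.
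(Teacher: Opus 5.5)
\textbf{The reduction is right, but the central identity is missing.} Your outer frame matches the paper's final step. With $R=\one-\mathfrak E$ and $L=\amit(S)-\anit(U)$, a $\muop$-derivation killing $\one$, one has $L(\mathcal P)=\muop(\mathcal P,L(\mathfrak E),\mathcal P)$. So \eqref{eq:one-bimould} is equivalent to
\[
 \muop(S,R)=\muop(R,U)-\amit(S)(\mathfrak E)+\anit(U)(\mathfrak E).
\]
This identity is the whole content of the lemma, and your proposal does not establish it.

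It cannot follow from the tripartite identity and the oddness and symmetry of $\mathfrak E$, because it constrains $S$. For instance, if $S$ is concentrated in depth two, the depth-two part reads exactly $S_2=\pushop(S)_2$, which fails for generic $S$. So the identity must come from the hypothesis $C\in\ARI_{\underline{\mathrm{al}}/\underline{\mathfrak{ol}}}$, through the explicit form of $\mathfrak E$-push invariance. That explicit form is precisely what you leave open as a matter of ``matching conventions''.

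\textbf{How the paper supplies it.} Kawamura's Theorem~4.2 gives that $C$ is $\mathfrak E$-push invariant. You attribute this to Proposition~5.1 and Corollary~5.7, whose roles are different. His inverse formula in Corollary~5.7, followed by swap and right multiplication by $R$, gives
\[
 \muop(S,R)=\muop(R,U)+\pushop\bigl(\muop(\mathfrak E,S)\bigr)-\pushop\bigl(\operatorname{swamu}(S,\mathfrak E)\bigr),
\]
where $\operatorname{swamu}(A,B)=\swapop(\muop(\swapop A,\swapop B))$. Two conversions then finish the argument:
\begin{itemize}
\item $\amit(S)(\mathfrak E)=-\pushop(\muop(\mathfrak E,S))$. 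This is a direct check using only that $\mathfrak E$ has depth one and $\pushop(\mathfrak E)=-\mathfrak E$; this is the only place oddness enters.
\item $\anit(U)(\mathfrak E)=-\pushop(\operatorname{swamu}(S,\mathfrak E))$. This is what Kawamura's Proposition~5.1 and Lemma~5.2 provide.
\end{itemize}
Your step one, an operator identity $\gaxit_{\one,\mathcal P}\circ\pushop=\pushop\circ\gaxit_{\mathcal P,\one}$ ``up to swap'', comes with no argument and is not what is needed. Only the second conversion above, applied to the depth-one bimould $\mathfrak E$, is required.

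\textbf{Minor slip.} Both $\axit$-terms vanish in depth one and $U_0=0$. So the depth-one check reads $U_1=S_1$, without the $\mathcal P_1$ terms; your conclusion there is unaffected.
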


\begin{proof}
Put $R=\one-\mathfrak E$, so that $\mathcal P=\invmu(R)$. Write
\[
 \operatorname{swamu}(A,B)
 =\swapop\bigl(\muop(\swapop(A),\swapop(B))\bigr).
\]
By \cite{Kaw2}*{Theorem~4.2}, the bimould $C$ is $\mathfrak E$-push invariant.  Applying the inverse formula in \cite{Kaw2}*{Corollary~5.7}, then applying swap and multiplying on the right by $R$, gives
\begin{equation}\label{eq:E-push-raw}
 \muop(S,R)=\muop(R,U)
 +\pushop\bigl(\muop(\mathfrak E,S)\bigr)
 -\pushop\bigl(\operatorname{swamu}(S,\mathfrak E)\bigr).
\end{equation}
Since $\mathfrak E$ is concentrated in depth one and $\pushop(\mathfrak E)=-\mathfrak E$, the flexion definitions give
\[
 \amit(S)(\mathfrak E)
 =-\pushop\bigl(\muop(\mathfrak E,S)\bigr).
\]
Proposition~5.1 and Lemma~5.2 of \cite{Kaw2} similarly give
\[
 \anit(U)(\mathfrak E)
 =-\pushop\bigl(\operatorname{swamu}(S,\mathfrak E)\bigr).
\]
\cref{eq:E-push-raw} now becomes
\begin{equation}\label{eq:E-push}
 \muop(S,R)=\muop(R,U)
 -\amit(S)(\mathfrak E)+\anit(U)(\mathfrak E).
\end{equation}
Let $L=\axit(S,-U)=\amit(S)-\anit(U)$.  Equation \eqref{eq:E-push} gives
\[
 L(\mathfrak E)=\muop(R,U)-\muop(S,R).
\]
Since $L$ is a derivation for $\muop$,
\begin{align*}
 L(\mathcal P)
 &=\muop\bigl(\mathcal P,L(\mathfrak E),\mathcal P\bigr)\\
 &=\muop(U,\mathcal P)-\muop(\mathcal P,S).
\end{align*}
Writing out $L=\amit(S)-\anit(U)$ gives \eqref{eq:one-bimould}.
\end{proof}

\begin{cor}\label{cor:strong-conjugation}
In addition to the assumptions of \cref{lem:E-push}, suppose that
\begin{equation*}
 S=\ganit_{\mathcal P}(C).
\end{equation*}
Then
\begin{equation*}
 \ganit_{\mathcal P}\circ\arit(C)\circ\ganit_{\mathcal P}^{-1}
 =\axit(S,-U).
\end{equation*}
\end{cor}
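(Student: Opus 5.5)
The plan is to compare the two sides as derivations of $\muop$, and then reduce to a single identity that \cref{lem:E-push} supplies.

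The left-hand side is a conjugate of $\arit(C)$, which is a derivation for $\muop$, by $\ganit_{\mathcal P}$, which is multiplicative for $\muop$. So the left-hand side is itself a derivation for $\muop$. The right-hand side $\axit(S,-U)$ is one as well. Both are infinitesimal versions of gaxit, and what we need to identify is the infinitesimal version of the gaxit composition law.

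The main step is to use the GAXI composition law of \cite{Kaw1}. Conjugating $\gaxit_{\one+\varepsilon C,\one-\varepsilon C}$ by $\ganit_{\mathcal P}=\gaxit_{\one,\mathcal P}$ gives again a gaxit operator $\gaxit_{\one+\varepsilon B',\one+\varepsilon C'}$ over the dual numbers. Linearizing the composition law in $\varepsilon$ should give explicit formulas:
\begin{itemize}
\item[] $B'=\ganit_{\mathcal P}(C)$, up to flexion corrections that vanish at first order;
\item[] $C'$ equal to $-\ganit_{\mathcal P}(C)$ plus a correction, where the correction $K$ is determined by the identity $\muop(\mathcal P,C')+\muop(\ldots)$ coming from $\anit$, $\amit$ applied to $\mathcal P$.
\end{itemize}
Concretely, I expect the linearized law to say that $\ganit_{\mathcal P}\arit(C)\ganit_{\mathcal P}^{-1}=\axit(B',C')$, where $B'=\ganit_{\mathcal P}(C)$ and $C'$ is characterized by
\[
 \muop(\mathcal P,B')+\amit(B')(\mathcal P)
 =\muop(-C',\mathcal P)-\anit(-C')(\mathcal P).
\]
This is the analogue of the mechanism behind the garit--gaxit conjugation identity of \cite{BKa}*{Proposition~5.1}, which was used in \cref{prop:preuri-comparison} with $\lambda(B)$ playing the role of $C'$.

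With this in hand, the hypothesis $S=\ganit_{\mathcal P}(C)$ identifies $B'=S$. \Cref{eq:one-bimould} of \cref{lem:E-push} then says that $-C'=U$ satisfies the defining identity. Uniqueness of $C'$ follows by a depth induction: the identity determines $C'_r$ from the term $\muop(-C',\mathcal P)$ in depth $r$ with $\mathcal P_0=1$, plus lower-depth data. This gives $C'=-U$, and hence the claim.

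I expect the main obstacle to be extracting the exact linearized form of the gaxit composition law with the correct signs and sides, that is, checking that the auxiliary bimould is characterized precisely by an equation of the shape \eqref{eq:one-bimould}. If the direct linearization is messy, I would instead test both derivations on generators. Both sides are $\muop$-derivations, and each is determined by its values on depth-one bimoulds $P(p)$ together with its $\gaxit$-form, as in the argument for \eqref{eq:generator-comparison}. Comparing the coefficient of the single marked block then reduces the claim to \eqref{eq:one-bimould} applied to $\mathcal P$.
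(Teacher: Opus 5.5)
Your architecture is the paper's: the GAXI composition law writes $\ganit_{\mathcal P}\circ\arit(C)\circ\ganit_{\mathcal P}^{-1}$ as $\axit(S,C')$, the auxiliary $C'$ is determined depth by depth because $\mathcal P_0=1$, and \eqref{eq:one-bimould} is used to check that $C'=-U$. The gap is that the one identity with real content is guessed rather than derived, and your guess has the wrong sign on the $\anit$-term. The correct characterization is
\[
 \muop(C',\mathcal P)+\axit(S,C')(\mathcal P)=-\muop(\mathcal P,S),
 \quad\text{equivalently}\quad
 \muop(\mathcal P,S)+\amit(S)(\mathcal P)
 =\muop(-C',\mathcal P)+\anit(-C')(\mathcal P).
\]
In your version the last term is $-\anit(-C')(\mathcal P)$. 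Substituting $-C'=U$ then gives $\muop(U,\mathcal P)-\anit(U)(\mathcal P)$, which differs from what \eqref{eq:one-bimould} provides by $2\anit(U)(\mathcal P)$. Already in depth two this is $2\bigl(\tfrac1{X_1}+\tfrac1{Y_1+Y_2}\bigr)U_1(X_2-X_1,Y_2)$, which is nonzero whenever $U_1\neq0$. So the step ``\eqref{eq:one-bimould} says that $-C'=U$ satisfies the defining identity'' is false as written, and the argument does not close.

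The missing derivation is short. Group the outer blocks inside the inner blocks in the definition of gaxit. This gives $\gaxit_{B_1,C_1}\circ\gaxit_{B_2,C_2}=\gaxit_{B_3,C_3}$ with $B_3=\muop(\gaxit_{B_1,C_1}(B_2),B_1)$ and $C_3=\muop(C_1,\gaxit_{B_1,C_1}(C_2))$. Put $G=\ganit_{\mathcal P}$, write $G^{-1}=\ganit_{\mathcal Q}$ with $G(\mathcal Q)=\invmu(\mathcal P)$, and let $\Theta=G\circ\arit(C)\circ G^{-1}$. Apply the law twice to $G\circ\gaxit_{\one+\varepsilon C,\one-\varepsilon C}\circ\ganit_{\mathcal Q}$. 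This gives $\Theta=\axit(G(C),C')$, so the first argument is exactly $S$ by hypothesis, and
\[
 C'=\muop\bigl(\mathcal P,\,G(\arit(C)(\mathcal Q))-\muop(S,\invmu(\mathcal P))\bigr).
\]
By the derivation property, $G(\arit(C)(\mathcal Q))=\Theta(\invmu(\mathcal P))=-\muop\bigl(\invmu(\mathcal P),\Theta(\mathcal P),\invmu(\mathcal P)\bigr)$. Multiplying on the right by $\mathcal P$ gives the displayed identity. Your uniqueness argument and \eqref{eq:one-bimould} then finish exactly as in the paper.

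Your fallback via depth-one test bimoulds $P(p)$ would not replace this derivation. The components here lie in $\widehat{\cL}(\QQ)$, where the generation argument behind \eqref{eq:generator-comparison} is not available. Moreover, evaluating $\Theta$ on $P(p)$ would require controlling $\ganit_{\mathcal P}^{-1}(P(p))$.
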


\begin{proof}
The GAXI composition law \cite{Kaw1}*{Proposition~3.10} writes the left-hand side as $\axit(S,C')$, where $C'$ is uniquely determined by
\begin{equation}\label{eq:C-prime}
 \muop(C',\mathcal P)+\axit(S,C')(\mathcal P)
 =-\muop(\mathcal P,S).
\end{equation}
Since $\mathcal P_0=1$, the depth-$n$ part of $\muop(C',\mathcal P)$ contains $C'_n$, while the other terms containing $C'$ use only lower-depth components.  Thus \eqref{eq:C-prime} determines $C'$ recursively.  Equation \eqref{eq:one-bimould} shows that $C'=-U$ satisfies it.
\end{proof}

\begin{lem}\label{lem:E}
The bimould $E$ is alternil and swap invariant.
\end{lem}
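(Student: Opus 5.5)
Since $E$ has only two nonzero components, $E_1=-\frac{X_1+Y_1}{2}$ and $E_2=\frac14$, both properties reduce to a finite check in depths at most three. The plan is to verify alternility through the coefficient map, and swap invariance directly from the swap formula.

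\emph{Alternility.} By \eqref{eq:coefficient-expansion} the coefficients of $E$ are
\[
 a_E\bi{1}{1}=-\tfrac12,\qquad a_E\bi{2}{0}=-\tfrac12,\qquad a_E\bi{1,1}{0,0}=\tfrac14,
\]
and all others vanish, since $X_1=X_1^{k_1-1}$ with $k_1=2$, $m_1=0$, and $Y_1=Y_1^{m_1}/m_1!$ with $k_1=1$, $m_1=1$. Alternility requires the coefficient map to vanish on $u*v$ for nonempty words $u,v$. Because $E$ lives in depths one and two, only products landing in depths at most two can contribute, so only the case $u=y_{k,m}$, $v=y_{l,n}$ matters. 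Here
\[
 u*v=y_{k,m}y_{l,n}+y_{l,n}y_{k,m}+y_{k+l,m+n}.
\]
The depth-two coefficient is nonzero only when $(k,m)=(l,n)=(1,0)$. In that case the sum is $2\cdot\frac14+a_E\bi{2}{0}=\frac12-\frac12=0$. For every other pair the depth-two terms vanish, and the depth-one term $y_{k+l,m+n}$ has $k+l\geq2$. This could be $(2,0)$ only when $k=l=1$ and $m=n=0$, which is the case already treated. It could be $(1,1)$ only when $k+l=1$, which is impossible. So every remaining product contributes zero. Products with higher-depth words, such as depth-three terms from $u*v$ with $u$ of depth two, vanish because $E_3=0$ and the contracted terms land on depth-two coefficients $y_{k+l,\dots}$ with some $k_i\geq2$, where $E_2$ has no such coefficient. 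I would spell this last point out carefully: for a general product, every term with a contraction has some letter with $k_i\geq2$, and the only nonzero depth-two coefficient has $k_1=k_2=1$.

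\emph{Swap invariance.} In depth one, swap sends $A_1\bi{X_1}{Y_1}$ to $A_1\bi{Y_1}{X_1}$, and $-\frac{X_1+Y_1}{2}$ is symmetric. In depth two, $E_2$ is constant, so it is invariant under any substitution. In depths $r\geq3$ both sides are zero.

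The only mildly delicate step is the bookkeeping for alternility in depth three, where products $y\,*\,y'y''$ produce depth-two contracted words. I expect this to reduce to the observation above: a contracted letter has $k\geq2$, so the $E_2$ coefficient kills it. The parity condition in depth one is not part of the statement, and indeed $E_1$ is odd, but it is not needed here.
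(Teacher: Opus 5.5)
Your proof is correct and is essentially the paper's: it uses the same three nonzero coefficients, the same single nontrivial check $2\cdot\frac14-\frac12=0$ on $y_{1,0}*y_{1,0}$, and the same direct verification of swap invariance. The only difference is how you rule out the other products: you analyse them depth by depth through contracted letters, while the paper uses weight homogeneity. The coefficient map of $E$ is supported in weight two, the bi-stuffle product is weight homogeneous, and $y_{1,0}$ is the only nonempty word of weight one, so $y_{1,0}*y_{1,0}$ is the only product that needs checking.
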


\begin{proof}
Swap invariance follows directly from \eqref{eq:E-definition}.  For alternility, its only nonzero coefficients are
\[
 a_E(y_{2,0})=a_E(y_{1,1})=-\frac12,
 \qquad
 a_E(y_{1,0}y_{1,0})=\frac14.
\]
The coefficient map of $E$ is supported in weight two.  Since the bi-stuffle product is weight homogeneous and $y_{1,0}$ is the only nonempty word of weight one, the only product of two nonempty words on which this coefficient map can be nonzero is
\[
 y_{1,0}*y_{1,0}
 =2y_{1,0}y_{1,0}+y_{2,0},
\]
and its value is $2\cdot\frac14-\frac12=0$.
\end{proof}

\begin{rem}\label{rem:E}
The bimould $E$ is the unique alternil swap invariant polynomial bimould of weight two up to scalars.  Namely, such a bimould has the form $(aX_1+bY_1,c,0,\ldots)$, swap invariance gives $a=b$, and the computation in the proof of \cref{lem:E} gives $2c+a=0$, so it is a multiple of $E$.  It is excluded from $\mathfrak B$ only by the depth-one parity.  The correction by $E$ in \eqref{eq:delta-definition} plays the role of the weight-two correction term in the Serre derivative of modular forms.
\end{rem}

\begin{prop}\label{prop:preuri-swap}
For every $A\in\mathfrak B$, one has
\begin{equation*}
 \swapop\bigl(\preuri(E,A)\bigr)=\preuri(E,A).
\end{equation*}
\end{prop}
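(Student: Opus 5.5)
We transport the statement to the ari side and use the conjugation identity of \cref{cor:strong-conjugation}. Put $C=\ganit_{\poc}(A)$ and $C_E=\ganit_{\poc}(E)$. By \cref{prop:preuri-comparison},
\[
 \preuri(E,A)=\ganit_{\pic}\bigl(\preari(C_E,C)\bigr)
 =\ganit_{\pic}\bigl(\arit(C)(C_E)+\muop(C_E,C)\bigr).
\]
By \cref{lem:fixed-locus}, $C$ lies in $\ARI_{\underline{\mathrm{al}}/\underline{\mathfrak{ol}}}$ and satisfies $\swapop(C)=\ganit_{\mathcal P}(C)=S$. By \cref{lem:E}, $E$ is alternil and swap invariant, so the first part of \cref{lem:fixed-locus} also gives $\swapop(C_E)=\ganit_{\mathcal P}(C_E)$. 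The parity condition fails for $E$, but we only need this fixed-locus identity for $C_E$.

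Next we use the identities from the proof of \cref{lem:fixed-locus}: $\swapop\circ\ganit_{\poc}=\ganit_{\picy}\circ\swapop$ and $\ganit_{\mathcal P}=\ganit_{\picy}\ganit_{\pic}$. They show that for any $F$, $\swapop(\ganit_{\pic}F)=\ganit_{\pic}F$ if and only if $\swapop(F)=\ganit_{\mathcal P}(F)$. So it suffices to show
\[
 \swapop\bigl(\arit(C)(C_E)+\muop(C_E,C)\bigr)
 =\ganit_{\mathcal P}\bigl(\arit(C)(C_E)+\muop(C_E,C)\bigr).
\]
We compute the right side. Since $\ganit_{\mathcal P}$ is multiplicative, \cref{cor:strong-conjugation} gives
\[
 \ganit_{\mathcal P}(\text{RHS})=\axit(S,-U)(S_E)+\muop(S_E,S),
 \qquad S_E=\swapop(C_E),\quad U=\pushop(S).
\]
For the left side we use the standard swap identities for flexions. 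Swap intertwines $\muop$ with swamu, and it intertwines $\arit$ with a combination of $\amit$, $\anit$ and push-type terms. These are the identities underlying Kawamura's $\mathfrak E$-push computations. Concretely, we expect
\[
 \swapop\bigl(\arit(C)(M)+\muop(M,C)\bigr)
 =\axit(S,-U)(\swapop M)+\muop(\swapop M,S)
\]
whenever $C$ is push-related in the sense used in \cref{lem:E-push}, i.e.\ $\swapop$ of an alternal $C$ with $\pushop(S)=U$. Setting $M=C_E$ then gives equality of both sides.

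The main obstacle is establishing the last swap--flexion identity in the needed generality. It is the linear, non-unit analogue of the push-invariance statement \cite{Kaw2}*{Theorem~4.2}. If a direct proof is awkward, the fallback is a depth argument. $E$ has only depths one and two, $C_E$ is explicit (namely $E_1$ and $E_2$ corrected by a single $\poc$ term), and $\preuri(E,A)=\arit(A)(E)+\muop(E,A)$ since $C(A)$ enters only through $\urit(A)$. Both sides can then be expanded using $\axit(S,-U)=\amit(S)-\anit(U)$, and the resulting terms matched blockwise against $\pushop$ of swamu terms. This mirrors the manipulations in the proof of \cref{lem:E-push}, with the flexion unit $\mathfrak E$ replaced by the polynomial $E$.
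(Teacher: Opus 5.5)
\textbf{Where the proposal falls short.} Your outline is the paper's proof:
\begin{itemize}
\item transport through \cref{prop:preuri-comparison};
\item the fixed-locus identities for $C=\ganit_{\poc}(A)$ and for $C_E=\ganit_{\poc}(E)$ (you are right that the parity condition is irrelevant for $E$);
\item \cref{cor:strong-conjugation} with the multiplicativity of $\ganit_{\mathcal P}$, giving $\axit(S,-U)(S_E)+\muop(S_E,S)$;
\item your criterion ``$\swapop(\ganit_{\pic}F)=\ganit_{\pic}F$ iff $\swapop(F)=\ganit_{\mathcal P}(F)$'', which is exactly \eqref{eq:fixed-map}.
\end{itemize}
The one step you leave open, the swap formula for preari, is the step that finishes the proof, and you have misjudged what it requires. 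It is not a push-invariance statement and needs no hypothesis on $C$. Your condition $\pushop(S)=U$ is just the definition of $U$, and alternality plays no role. For arbitrary bimoulds $M,C$ with $S=\swapop(C)$ one has
\[
 \swapop\bigl(\preari(M,C)\bigr)=\muop(\swapop M,S)+\axit(S,-\pushop S)(\swapop M).
\]
This is \'{E}calle's swap identity, which the paper quotes as \cite{Sc}*{Equation~(2.4.10)}. It can be checked by matching terms after the swap substitution:
\begin{itemize}
\item $\muop(M,C)$ goes to the terms of $\amit(S)(\swapop M)$ with empty left part;
\item the terms of $\amit(C)(M)$ with empty left part go to $\muop(\swapop M,S)$;
\item the remaining amit terms go to the remaining amit terms;
\item the anit terms go to $\anit(\pushop S)(\swapop M)$, where the push appears when the $C$-factor is rewritten in swapped coordinates.
\end{itemize}
All of the $\mathfrak E$-push input from \cite{Kaw2} is already used up in \cref{cor:strong-conjugation}; the swap side is purely formal. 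Applying this identity with $M=C_E$ completes your argument, and it then coincides with the paper's.

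\textbf{The fallback.} Your fallback would not work as written, for two reasons. First, $\ganit_{\poc}(E)$ is not $E$ plus a single $\poc$-correction. The marked block carrying $E_1$ can have any length $r$, contributing a factor $\poc_{r-1}$, so $\ganit_{\poc}(E)$ has a nonzero component with poles in every depth. Second, $\preuri(E,A)=\urit(A)(E)+\muop(E,A)$ contains the term $\anit(C(A))(E)$, which does not vanish in general; for instance, $C(\xi_3)_2=3(X_1-X_2)$. The simplification $\urit=\arit$ holds only when $E$ is in the second slot, because $C(E)=0$. So a finite computation on the low-depth bimould $E$ does not reduce the problem, and the gap should be closed by the unconditional identity above.
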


\begin{proof}
Put
\[
 C=\ganit_{\poc}(A),
 \qquad
 C_0=\ganit_{\poc}(E),
 \qquad
 S=\swapop(C),
 \qquad
 U=\pushop(S).
\]
Applying \cref{lem:fixed-locus} first to $A$ and then to $E$, using \cref{lem:E} in the second case, gives
\begin{equation}\label{eq:two-fixed-points}
 S=\ganit_{\mathcal P}(C),
 \qquad
 \swapop(C_0)=\ganit_{\mathcal P}(C_0).
\end{equation}

The swap formula for preari \cite{Sc}*{Equation~(2.4.10)} gives
\begin{equation}\label{eq:swap-preari}
 \swapop\bigl(\preari(C_0,C)\bigr)
 =\muop(\swapop(C_0),S)
  +\axit(S,-U)(\swapop(C_0)).
\end{equation}
By \cref{cor:strong-conjugation}, the multiplicativity of $\ganit_{\mathcal P}$ \cite{Kaw1}*{Proposition~3.12}, and \eqref{eq:two-fixed-points}, the right-hand side is
\[
 \ganit_{\mathcal P}\bigl(
  \muop(C_0,C)+\arit(C)(C_0)
 \bigr)
 =\ganit_{\mathcal P}\bigl(\preari(C_0,C)\bigr).
\]
The fixed-locus identities also give, as stated after \cite{BKa}*{Lemma~7.2},
\begin{equation}\label{eq:fixed-map}
 \ganit_{\poc}\circ\swapop\circ\ganit_{\pic}
 =\ganit_{\mathcal P}^{-1}\circ\swapop.
\end{equation}
It follows from \eqref{eq:swap-preari} and \eqref{eq:fixed-map} that $\ganit_{\pic}(\preari(C_0,C))$ is swap invariant. The bimould is identified with $\preuri(E,A)$ by \cref{prop:preuri-comparison}.
\end{proof}

\begin{proof}[Proof of \cref{thm:main}]
By \cite{BKa}*{Theorem~A(ii)}, $(\mathfrak B,\uri)$ is a weight-graded Lie algebra.  It remains, by \cref{prop:ambient-triple}, to show that $\checkdelta$ preserves $\mathfrak B$ and raises the weight by two.  Formula \eqref{eq:delta-comparison} and the defining formulas \eqref{eq:BvI-definition} and \eqref{eq:preuri-definition} show that $\checkdelta(A)$ is polynomial.  Suppose that $A$ has maximum depth $n$ and maximum ordinary degree $d$.  A term of $C(A)_m$ can be nonzero only if its divided-difference order $r$ in \eqref{eq:C-definition} satisfies $r\geq m-n$ and $r\leq d$. We obtain $C(A)_m=0$ for $m>n+d$, so finite depth is also preserved.

If $A$ is alternil, then $\ganit_{\poc}(A)$ is alternal. Multiplication by $\sum_iX_iY_i$ preserves alternality, since this sum is unchanged by a shuffle of the variables.  This proves that $\delta_0(A)$ is alternil. By \cref{lem:E}, the bimould $E$ is alternil, and the alternil bimoulds form a uri Lie algebra by transport from ari.  Hence $\uri(A,E)$ is alternil.  In depth one, the uri bracket vanishes and
\begin{equation*}
 (\checkdelta A)_1=X_1Y_1A_1.
\end{equation*}
The required depth-one parity is preserved.

Formula \eqref{eq:delta-comparison} gives
\[
 \checkdelta(A)=\delta_{\mathrm{BvI}}(A)-\preuri(E,A).
\]
The operator $\delta_{\mathrm{BvI}}$ commutes with swap by \cite{BIM}*{Proposition~4.11}, and the second term is swap invariant by \cref{prop:preuri-swap}.  Hence $\checkdelta(A)$ is swap invariant.  Finally, both terms in \eqref{eq:delta-definition} have weight two.  This completes the proof.
\end{proof}

\section{The kernel of \texorpdfstring{$\checkD$}{D-check} and the operations
\texorpdfstring{$\uri_n$}{uri-n}}\label{sec:kernel}

In this section we prove \cref{thm:B,thm:C} and study the operations $\uri_n$ on $\mathfrak m$.  Everything follows from the relations \eqref{eq:sl2-relations} together with \cref{thm:main} and the fact that the weight grading of $\mathfrak B$ is by positive integers.

\begin{lem}\label{lem:sl2-identities}
Let $A\in\mathfrak B_k$.
\begin{enumerate}
\item[(i)] If $\checkD(A)=0$, then
$\checkD\checkdelta^m(A)=m(k+m-1)\,\checkdelta^{m-1}(A)$ for all $m\geq1$.
\item[(ii)] If $\checkdelta(A)=0$, then
$\checkdelta\checkD^j(A)=-j(k-j+1)\,\checkD^{j-1}(A)$ for all $j\geq1$.
\end{enumerate}
\end{lem}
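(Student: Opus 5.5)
Both parts are proved by induction using only the relations \eqref{eq:sl2-relations} of \cref{thm:main} and the fact that $\checkW$ acts on $\mathfrak B_k$ as multiplication by $k$. The second fact should follow from the definition: in depth $r$ a weight-$k$ component is homogeneous of total degree $k-r$, so the Euler operator plus $r$ gives $k$. Since $\checkdelta$ has weight $2$ and $\checkD$ has weight $-2$, we get $\checkW\checkdelta^{i}(A)=(k+2i)\checkdelta^i(A)$ and $\checkW\checkD^{i}(A)=(k-2i)\checkD^i(A)$.

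For (i) we induct on $m$. The case $m=1$ comes from $[\checkD,\checkdelta]=\checkW$ applied to $A$:
\[
 \checkD\checkdelta(A)=\checkdelta\checkD(A)+\checkW(A)=kA,
\]
which is $1\cdot k\cdot\checkdelta^0(A)$. For the inductive step we write
\[
 \checkD\checkdelta^{m}(A)=\checkdelta\checkD\checkdelta^{m-1}(A)+\checkW\checkdelta^{m-1}(A)
\]
and apply the induction hypothesis to the first term. This gives the coefficient $(m-1)(k+m-2)+(k+2m-2)$. Expanding, it equals $mk+m^2-m=m(k+m-1)$.

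Part (ii) is the mirror argument. We use $[\checkD,\checkdelta]=\checkW$ in the form $\checkdelta\checkD=\checkD\checkdelta-\checkW$. For $j=1$ this gives
\[
 \checkdelta\checkD(A)=\checkD\checkdelta(A)-\checkW(A)=-kA.
\]
For the inductive step, $\checkdelta\checkD^{j}(A)=\checkD\checkdelta\checkD^{j-1}(A)-\checkW\checkD^{j-1}(A)$. By induction and the weight of $\checkD^{j-1}(A)$, the coefficient is $-(j-1)(k-j+2)-(k-2j+2)$. Expanding, it equals $-j(k-j+1)$.

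No step is a real obstacle. The only point needing care is that $\checkD^{j-1}(A)$ and $\checkdelta^{m-1}(A)$ stay in $\mathfrak B$ with the stated weights, so that $\checkW$ acts on them by the scalar. This holds by \cref{thm:main} and \cref{prop:ambient-triple}, with the convention $\mathfrak B_k=0$ for $k\le0$ covering degenerate cases trivially. The rest is the two one-line coefficient checks above.
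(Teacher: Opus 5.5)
Your proof is correct and is essentially the paper's argument. The paper expands $[\checkD,\checkdelta^m]=\sum_{i=0}^{m-1}\checkdelta^i\,\checkW\,\checkdelta^{m-1-i}$ and sums the eigenvalues $k+2(m-1-i)$ directly, which is your induction unrolled (likewise for (ii)).
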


\begin{proof}
The relations \eqref{eq:sl2-relations} give
\[
 [\checkD,\checkdelta^m]
 =\sum_{i=0}^{m-1}
 \checkdelta^i\,[\checkD,\checkdelta]\,\checkdelta^{m-1-i}
 =\sum_{i=0}^{m-1}\checkdelta^i\,\checkW\,\checkdelta^{m-1-i}\,.
\]
Applied to $A$, the operator $\checkW$ in the $i$-th summand acts on an element of weight $k+2(m-1-i)$.  Summing these scalars over $i$ gives $mk+m(m-1)$, so $[\checkD,\checkdelta^m](A)=m(k+m-1)\,\checkdelta^{m-1}(A)$, which proves (i).  The same computation with $[\checkdelta,\checkD^j] =-\sum_{i=0}^{j-1}\checkD^i\,\checkW\,\checkD^{j-1-i}$ proves (ii).
\end{proof}

\begin{proof}[Proof of \cref{thm:B}]
Since $\checkD$ is a derivation of uri, for $A,B\in\mathfrak m$ we have
\[
 \checkD\uri(A,B)
 =\uri(\checkD A,B)+\uri(A,\checkD B)=0.
\]
This proves that $\mathfrak m$ is closed under uri.  Its weight grading follows because $\checkD$ has weight $-2$, which proves (i).

We next prove the injectivity in (ii).  Let $A\in\mathfrak B_k$ be homogeneous with $\checkdelta(A)=0$ and $A\neq0$, so $k\geq1$.  Since $\checkD$ lowers the weight by two and $\mathfrak B_w=0$ for $w\leq0$, there is a minimal $j\geq1$ with $\checkD^j(A)=0$.  Part (ii) of \cref{lem:sl2-identities} gives
\[
 0=\checkdelta\checkD^j(A)=-j(k-j+1)\,\checkD^{j-1}(A)\,,
\]
and $\checkD^{j-1}(A)\neq0$, so $k=j-1$.  The element $\checkD^{j-1}(A)$ has weight $k-2(j-1)=-k<0$, so it vanishes, which contradicts the minimality of $j$.

Next, the sum $\sum_{m\geq0}\checkdelta^m(\mathfrak m)$ is direct.  Suppose that $\sum_{m=0}^{M}\checkdelta^m(x_m)=0$ with homogeneous $x_m\in\mathfrak m_{k-2m}$ and $x_M\neq0$.  Applying part (i) of \cref{lem:sl2-identities} repeatedly gives $\checkD^M\checkdelta^m(x_m)=0$ for $m<M$ and
\[
 \checkD^M\checkdelta^M(x_M)
 =M!\,(k'+M-1)(k'+M-2)\cdots k'\,x_M\,,
\]
where $k'=k-2M$ is the weight of $x_M$.  All factors are positive, since $k'\geq1$.  Therefore $x_M=0$, a contradiction.

Now we show $\mathfrak B_w=\checkdelta(\mathfrak B_{w-2})+\mathfrak m_w$ by induction on $w$.  For $w\leq2$ we have $\checkD(\mathfrak B_w)\subseteq\mathfrak B_{w-2}=0$, so $\mathfrak B_w=\mathfrak m_w$.  Let $A\in\mathfrak B_w$.  Iterating the induction hypothesis in weights below $w$, we can write $\checkD(A)=\sum_{m\geq0}\checkdelta^m(y_m)$ with homogeneous $y_m\in\mathfrak m_{k_m}$ of weight $k_m=w-2-2m$, where $y_m=0$ unless $k_m\geq1$.  Put
\[
 A'=\sum_{m\geq0}\frac{1}{(m+1)(k_m+m)}\,
 \checkdelta^{m+1}(y_m)\,.
\]
The denominators are nonzero, since $k_m\geq1$ whenever $y_m\neq0$, and part (i) of \cref{lem:sl2-identities} gives $\checkD(A')=\checkD(A)$.  So $A-A'\in\mathfrak m_w$, which together with the directness proves the decomposition in (iii).  The same argument proves the surjectivity in (ii), since every element of $\mathfrak B_{w-2}$ is of the form $\checkD(A')$ as above.  Injectivity in (ii) was proved above.  Finally, the dimension formula follows from $\dim_\QQ\mathfrak m_w =\dim_\QQ\mathfrak B_w-\operatorname{rank}(\checkD|_{\mathfrak B_w})$.
\end{proof}

The extra term coming from $G_2$ in the decomposition of $\widetilde{\mathcal M}$ has no counterpart in \cref{thm:B}.  In $\widetilde{\mathcal M}$ the constants sit in weight zero, the scalar $(m+1)(k_m+m)$ in the proof of \cref{thm:B} vanishes for $k_m=m=0$, and the derivatives of $G_2$ appear as a separate summand.  The Lie algebra $\mathfrak B$ has no weight-zero part, so no extra term appears.

Notice that $\uri_0(f,g)=\uri(f,g)$.

\begin{proof}[Proof of \cref{thm:C}]
Since $\checkD$ is a derivation of uri, part (i) of \cref{lem:sl2-identities} gives
\[
 \checkD\,\uri\bigl(\checkdelta^r(f),\checkdelta^s(g)\bigr)
 ={}r(k+r-1)\,
 \uri\bigl(\checkdelta^{r-1}(f),\checkdelta^s(g)\bigr)
 +s(l+s-1)\,
 \uri\bigl(\checkdelta^r(f),\checkdelta^{s-1}(g)\bigr)\,.
\]
Here and below, a summand containing a negative exponent is omitted. In $\checkD\,\uri_n(f,g)$, the term $\uri(\checkdelta^r(f),\checkdelta^s(g))$ with $r+s=n-1$ therefore appears with the coefficient
\[
 (-1)^{r+1}\binom{n+k-1}{s}\binom{n+l-1}{r+1}(r+1)(k+r)
 +(-1)^{r}\binom{n+k-1}{s+1}\binom{n+l-1}{r}(s+1)(l+s)\,.
\]
We have $\binom{n+l-1}{r+1}(r+1)=\binom{n+l-1}{r}(n+l-1-r)$ with $n+l-1-r=l+s$, and $\binom{n+k-1}{s+1}(s+1)=\binom{n+k-1}{s}(n+k-1-s)$ with $n+k-1-s=k+r$.  So the two summands cancel, and $\checkD\,\uri_n(f,g)=0$. For the symmetry, interchange $r$ and $s$ in \eqref{eq:uri-n} and use $(-1)^s=(-1)^{n}(-1)^{r}$ together with the antisymmetry of uri.
\end{proof}

Notice that the parity in \cref{thm:C} is opposite to the classical case.  Since uri is antisymmetric, the operations $\uri_n$ with odd $n$ are symmetric.  In particular, $\uri_1(f,f)=2k\,\uri(f,\checkdelta(f))$ does not vanish in general. For example, with $\xi_1=(1,0,0,\ldots)\in\mathfrak B_1$, the element $\frac12\uri_1(\xi_1,\xi_1) =\uri(\xi_1,\checkdelta(\xi_1))$ has the depth components
\[
 X_1Y_1-2Y_1X_2+2X_1Y_2-X_2Y_2\,,
 \qquad
 -X_1+2X_2+Y_2-X_3-Y_3
\]
in depths two and three.

\begin{lem}\label{lem:uri-n-generation}
For $f\in\mathfrak m_k$, $g\in\mathfrak m_l$ and $N\geq0$, one has
\[
 \operatorname{span}_{\QQ}\{\uri(\checkdelta^a(f),\checkdelta^{N-a}(g))\mid0\leq a\leq N\}
 =\operatorname{span}_{\QQ}\{\checkdelta^{N-n}\uri_n(f,g)\mid0\leq n\leq N\}.
\]
\end{lem}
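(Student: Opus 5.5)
The right-hand span is contained in the left-hand one because $\checkdelta$ is a uri derivation. Equality then follows if the $N+1$ spanning vectors on the right are related to those on the left by an invertible triangular matrix; no independence of the brackets themselves is needed.

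Write $u_a=\uri(\checkdelta^a f,\checkdelta^{N-a}g)$ and, more generally, $u_{r,s}=\uri(\checkdelta^r f,\checkdelta^s g)$. Since $\checkdelta$ is a derivation of uri by \cref{thm:main},
\[
 \checkdelta\,u_{r,s}=u_{r+1,s}+u_{r,s+1}.
\]
By induction, $\checkdelta^{N-n}u_{r,s}$ with $r+s=n$ is a nonnegative integer combination of the $u_{r',s'}$ with $r'+s'=N$. Applying this to \eqref{eq:uri-n}, each $\checkdelta^{N-n}\uri_n(f,g)$ is a $\QQ$-linear combination of $u_0,\dots,u_N$, which gives the inclusion $\supseteq$.

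For the reverse inclusion I will set up a triangular change of basis. To track the terms, use the filtration by the exponent on $f$: define $F_j=\operatorname{span}\{u_a\mid a\le j\}$ (or the analogous one in $s$, whichever makes the leading term cleanest). The key point is to isolate a single extreme coefficient. In \eqref{eq:uri-n}, the term with $r=n$, $s=0$ has coefficient $(-1)^n\binom{n+l-1}{n}$, which is nonzero since $l\ge1$. Applying $\checkdelta^{N-n}$ to $u_{n,0}$ produces $u_{N-a,a}$ with multiplicity $\binom{N-n}{a}$. So the top-$r$ term is $u_{N,0}$ for every $n$, and that extreme does not isolate anything.

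Instead I will filter by the exponent $s$ on $g$. In $\checkdelta^{N-n}\uri_n(f,g)$, the largest $s$ that occurs is $N$, coming from the terms $u_{r,n-r}$ with all $N-n$ derivatives placed on $g$. This is again not a single term, so the filtration alone does not give triangularity. The cleaner route is generating functions. Encode $u_{r,s}$ as $x^r y^s$ in a formal polynomial space; this records exact bookkeeping without claiming the $u$'s are independent. Then $\checkdelta$ acts as multiplication by $(x+y)$, and $\uri_n$ corresponds to the homogeneous degree-$n$ polynomial
\[
 p_n(x,y)=\sum_{r+s=n}(-1)^r\binom{n+k-1}{s}\binom{n+l-1}{r}x^r y^s.
\]
The claim reduces to showing that $(x+y)^{N-n}p_n$, for $0\le n\le N$, span the space of degree-$N$ binary forms, a space of dimension $N+1$. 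Because the relation between the two families of $u$-combinations is linear, the spans then coincide.

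For the spanning claim I will use $p_n(x,-x)\ne0$. Substituting $y=-x$ gives
\[
 p_n(x,-x)=(-1)^n x^n\sum_{r+s=n}\binom{n+k-1}{s}\binom{n+l-1}{r}
 =(-1)^n\binom{2n+k+l-2}{n}x^n
\]
by Vandermonde's identity, and this is nonzero. Hence $(x+y)\nmid p_n$. Now suppose $\sum_n c_n (x+y)^{N-n}p_n=0$. Setting $y=-x$ kills every term except $n=N$, so $c_N=0$. Then divide by $(x+y)$ and induct downward. The forms are therefore linearly independent, there are $N+1$ of them, and they form a basis.

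The main obstacle is locating this nonvanishing, which the leading-term attempts above miss. Once it is in place, the rest is routine linear algebra via the substitution map $x^ry^s\mapsto u_{r,s}$.
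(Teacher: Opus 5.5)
Your proof is correct, but it takes a different and more elementary route than the paper's.

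\textbf{The paper's route.} The paper works in the actual tensor space $V_N$ spanned by $\checkdelta^a(f)\otimes\checkdelta^{N-a}(g)$.
\begin{itemize}
\item It first needs the injectivity of $\checkdelta$ from \cref{thm:B}(ii) to know these vectors form a basis.
\item It then uses the lowering operator $\checkD$: surjectivity $V_N\to V_{N-1}$, the kernel $\QQ z_N$, and the identity $\checkD\checkdelta^{N-n}z_n=(N-n)(k+l+N+n-1)\checkdelta^{N-n-1}z_n$.
\item Induction on $N$ then shows that the $\checkdelta^{N-n}z_n$ form a second basis of $V_N$.
\item Both bases are pushed forward under $u\otimes v\mapsto\uri(u,v)$.
\end{itemize}

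\textbf{Your route.} You replace $V_N$ by the free model $\QQ[x,y]_N$, where independence of the monomials is automatic. The linear map $x^ry^s\mapsto\uri(\checkdelta^r f,\checkdelta^s g)$ intertwines multiplication by $x+y$ with $\checkdelta$. Independence of the forms $(x+y)^{N-n}p_n$ then follows from evaluating at $y=-x$ and applying Vandermonde. This evaluation is the same functional $\lambda$ (evaluation at $(x,y)=(-1,1)$, which kills $\checkdelta$-images) that the paper uses afterwards in \cref{prop:uri-n-projection}.

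\textbf{Comparison.} Your version uses only that $\checkdelta$ is a uri derivation. It needs no injectivity of $\checkdelta$, no $\checkD$, no nonvanishing of $f,g$, and not even $f,g\in\mathfrak m$. So the span identity is a purely formal consequence of the derivation property. The hypothesis $\mathfrak m$ matters only for knowing $\uri_n(f,g)\in\mathfrak m$. In exchange, the paper's route stays inside the $\mathfrak{sl}_2$ framework and shows the $\checkdelta^{N-n}z_n$ form a basis of the genuine module $V_N$. It also sets up $V_N$ and $z_n$ for reuse in the projection formula.

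\textbf{Two small presentation points.}
\begin{itemize}
\item The abandoned leading-term attempts in your middle paragraphs play no role in the final argument and can be deleted.
\item State explicitly that $k,l\geq1$, since otherwise $f$ or $g$ is zero and the claim is trivial. This is what guarantees $\binom{2n+k+l-2}{n}\neq0$.
\end{itemize}
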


\begin{proof}
The claim is immediate if $f=0$, $g=0$ or $N=0$, so suppose that both elements are nonzero and $N\geq1$.  Consider
\[
 z_n(f,g)=\sum_{a+b=n}(-1)^a
 \binom{n+k-1}{b}\binom{n+l-1}{a}
 \checkdelta^a(f)\otimes\checkdelta^b(g)\,.
\]
On the tensor product, let $\checkD$ and $\checkdelta$ act by the sum of their actions on the two factors.  The calculation in the proof of \cref{thm:C} gives $\checkD(z_n(f,g))=0$.  Put
\[
V_N=\operatorname{span}\{\checkdelta^a(f)\otimes
 \checkdelta^{N-a}(g)\mid 0\leq a\leq N\}.
\]
By the injectivity of $\checkdelta$ in \cref{thm:B}, the displayed vectors are nonzero and lie in distinct pairs of tensor-factor weights, so they form a basis of $V_N$.  With the analogous basis of $V_{N-1}$, the restriction of $\checkD$ to the span of the vectors with $0\leq a<N$ is triangular with nonzero diagonal entries $(N-a)(l+N-a-1)$ for $0\leq a<N$.  The map $\checkD:V_N\to V_{N-1}$ is therefore surjective, and its kernel is spanned by $z_N(f,g)$, which is nonzero since its coefficient at $a=0$ is $\binom{N+k-1}{N}$.  We also have, for $n<N$,
\[
 \checkD\checkdelta^{N-n}z_n(f,g)
 =(N-n)(k+l+N+n-1)\checkdelta^{N-n-1}z_n(f,g).
\]
Induction on $N$ now shows that the vectors $\checkdelta^{N-n}z_n(f,g)$ for $0\leq n\leq N$ form another basis of $V_N$.

The bracket map $u\otimes v\mapsto\uri(u,v)$ intertwines the diagonal $\mathfrak{sl}_2$-action with the action on $\mathfrak B$, since $\checkD$, $\checkdelta$ and $\checkW$ are derivations.  It sends $z_n(f,g)$ to $\uri_n(f,g)$.  Applying this map to the two bases of $V_N$ gives the claimed equality of spans.
\end{proof}

Let $\operatorname{pr}_{\mathfrak m}$ be the projection onto $\mathfrak m$ given by \cref{thm:B}(iii).

\begin{prop}\label{prop:uri-n-projection}
For $f\in\mathfrak m_k$, $g\in\mathfrak m_l$ and $r+s=n$, one has
\begin{equation}\label{eq:uri-n-projection}
 \operatorname{pr}_{\mathfrak m}
 \uri\bigl(\checkdelta^r(f),\checkdelta^s(g)\bigr)
 =
 \frac{(-1)^r}{\binom{2n+k+l-2}{n}}\uri_n(f,g).
\end{equation}
\end{prop}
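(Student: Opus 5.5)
The plan is to lift everything to the tensor model used in the proof of \cref{lem:uri-n-generation}, locate the $\mathfrak m$-component there, and then push it to $\mathfrak B$ with the bracket map. As in that proof, let $V_N\subseteq\mathfrak B\otimes\mathfrak B$ be spanned by
\[
 e^{(N)}_a=\checkdelta^a(f)\otimes\checkdelta^{N-a}(g),\qquad 0\leq a\leq N,
\]
and let $\checkdelta$ act diagonally on the tensor product. We may assume $f,g\neq0$; otherwise both sides of \eqref{eq:uri-n-projection} vanish. Then the $e^{(N)}_a$ form a basis of $V_N$, and the proof of \cref{lem:uri-n-generation} shows that the vectors $\checkdelta^{N-j}z_j(f,g)$ with $0\leq j\leq N$ form another basis.

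First I identify the $\mathfrak m$-component. Expand
\[
 e^{(n)}_r=c_r\,z_n(f,g)+\sum_{j<n}\lambda_j\,\checkdelta^{n-j}z_j(f,g).
\]
Apply the bracket map $u\otimes v\mapsto\uri(u,v)$. It intertwines the diagonal action of $\checkdelta$ with $\checkdelta$ on $\mathfrak B$, because $\checkdelta$ is a uri derivation, and it sends $z_j$ to $\uri_j(f,g)$. This gives
\[
 \uri\bigl(\checkdelta^r f,\checkdelta^s g\bigr)=c_r\,\uri_n(f,g)+\sum_{j<n}\lambda_j\,\checkdelta^{n-j}\uri_j(f,g).
\]
By \cref{thm:C}, each $\uri_j(f,g)$ lies in $\mathfrak m$. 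Hence the terms with $j<n$ lie in $\checkdelta^{n-j}(\mathfrak m)$ with $n-j\geq1$. By the directness in \cref{thm:B}(iii), these terms are killed by $\operatorname{pr}_{\mathfrak m}$. It follows that
\[
 \operatorname{pr}_{\mathfrak m}\uri\bigl(\checkdelta^r f,\checkdelta^s g\bigr)=c_r\,\uri_n(f,g),
\]
and it remains to compute $c_r$.

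To compute $c_r$, I use a linear functional that kills the complement of $z_n$. The terms with $j<n$ span $\checkdelta(V_{n-1})$, since the vectors $\checkdelta^{n-1-j}z_j$ with $j\leq n-1$ form a basis of $V_{n-1}$. So it suffices to find a functional $\lambda$ on $V_n$ with $\lambda\circ\checkdelta=0$ on $V_{n-1}$; then $c_r=\lambda(e^{(n)}_r)/\lambda(z_n)$. The Leibniz rule gives
\[
 \checkdelta\bigl(e^{(n-1)}_a\bigr)=e^{(n)}_{a+1}+e^{(n)}_a,
\]
so the condition on $\lambda$ forces $\lambda(e^{(n)}_{a+1})=-\lambda(e^{(n)}_a)$, and we may take $\lambda(e^{(n)}_a)=(-1)^a$. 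Then
\[
 \lambda(z_n)=\sum_{a+b=n}\binom{n+k-1}{b}\binom{n+l-1}{a}=\binom{2n+k+l-2}{n}
\]
by Vandermonde's identity. This is nonzero, which confirms that $\lambda$ detects the $z_n$-coordinate, and it yields $c_r=(-1)^r/\binom{2n+k+l-2}{n}$, which is \eqref{eq:uri-n-projection}.

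The only delicate point is the bookkeeping that $\checkdelta(V_{n-1})$ is exactly the span of the basis vectors of $V_n$ other than $z_n$. This follows from the basis statement in the proof of \cref{lem:uri-n-generation} applied to $N=n-1$ and $N=n$. Once that is in place, the Vandermonde evaluation is routine.
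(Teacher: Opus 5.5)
Your proof is correct and follows the paper's own argument: the same alternating functional $\lambda$ on $V_n$ vanishing on $\checkdelta(V_{n-1})$, the Vandermonde evaluation $\lambda(z_n(f,g))=\binom{2n+k+l-2}{n}$, and transport along $u\otimes v\mapsto\uri(u,v)$. You also make explicit two steps the paper leaves tacit: the Leibniz computation $\checkdelta(e^{(n-1)}_a)=e^{(n)}_{a+1}+e^{(n)}_a$ that forces the choice of $\lambda$, and the use of \cref{thm:C} with the directness in \cref{thm:B}(iii) to see that $\operatorname{pr}_{\mathfrak m}$ kills the terms $\checkdelta^{n-j}\uri_j(f,g)$ with $j<n$.
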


\begin{proof}
The formula is immediate if $f=0$, $g=0$ or $n=0$, so suppose that both elements are nonzero and $n\geq1$.
Define a linear functional on $V_n$ by
\[
 \lambda\bigl(\checkdelta^a(f)\otimes
 \checkdelta^{n-a}(g)\bigr)=(-1)^a.
\]
This functional vanishes on $\checkdelta(V_{n-1})$, while the Vandermonde identity gives
\[
 \lambda\bigl(z_n(f,g)\bigr)
 =\sum_{a+b=n}
 \binom{n+k-1}{b}\binom{n+l-1}{a}
 =\binom{2n+k+l-2}{n}.
\]
The coefficient of $z_n(f,g)$ in $\checkdelta^r(f)\otimes\checkdelta^s(g)$ is the scalar in \eqref{eq:uri-n-projection}.  Applying $u\otimes v\mapsto\uri(u,v)$ proves the formula.
\end{proof}

\section{The odd derivations}\label{sec:odd-derivations}

Given $r\geq0$, define the inner derivation
\begin{equation}\label{eq:partial-odd}
 \partial_{2r+1}
 =\operatorname{ad}_{\uri}\bigl(\checkdelta^r(\xi_1)\bigr),
 \qquad
 \partial_{2r+1}(A)
 =\uri\bigl(\checkdelta^r(\xi_1),A\bigr).
\end{equation}
We set
\[
 \mathfrak d=\ker(\partial_1).
\]
Notice that $\partial_1$ is not one of the three operators of \cref{thm:main}.  It has weight one, commutes with $\checkD$ and has the following explicit form, in the notation of the proof of \cref{prop:preuri-comparison} with $m=r$, where $\lambda$ is defined by \eqref{eq:lambda-recursion}.

\begin{lem}\label{lem:partial-one}
For $A\in\BARI^{\mathrm{pol}}$ and $r\geq2$, one has
\[
 (\partial_1A)_r
 =A^{h,r}-A^h+A^t+\lambda(A)^{t,1}
 -\sum_{i=1}^{r-1}\bigl(\varphi_i^+A-\varphi_{i+1}^-A\bigr)_r,
\]
and $(\partial_1A)_1=0$.
\end{lem}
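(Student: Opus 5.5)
The plan is to expand $\partial_1A=\uri(\xi_1,A)$ with \eqref{eq:uri-preuri}. This gives
\[
 \partial_1A=\preuri(\xi_1,A)-\preuri(A,\xi_1),
\]
which is valid because $\xi_1$ and $A$ are polynomial. I would then compute each preuri term separately from the definition \eqref{eq:preuri-definition}, using the explicit marked-block description of $\gaxit$. The key simplification is that $\xi_1$ is concentrated in depth one with constant value $1$. So every flexion term involving $\xi_1$ has a single block, and no translated arguments survive inside $\xi_1$ itself.

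For the depth-one claim, both $\muop(\xi_1,A)_1$ and $\muop(A,\xi_1)_1$ vanish, since $A_0=(\xi_1)_0=0$. The flexion terms of $\axit$ in depth one reduce to $A_1\cdot(\xi_1)_1$ with trivial side factors, so they also vanish at first order. This gives $(\partial_1A)_1=0$; it is also consistent with the statement that uri vanishes in depth one.

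For the first term, I would use the identity $\urit(A)=\axit(A,\lambda(A))$ from the proof of \cref{prop:preuri-comparison}. Then $\preuri(\xi_1,A)=\axit(A,\lambda(A))(\xi_1)+\muop(\xi_1,A)$. The concatenation term in depth $r$ is $A^{t}$. In $\axit(A,\lambda(A))(\xi_1)$ there is one block $[1,r]$ with marked index $p$, and the first-order part in $\varepsilon$ has exactly one nontrivial side factor. The left factor contributes $A$ on $w_1,\dots,w_{p-1}$ shifted by $X_p$, which forces $p=r$ and gives $A^{h,r}$. The right factor contributes $\lambda(A)$ on $w_{p+1},\dots,w_r$ shifted by $X_p$, which forces $p=1$ and gives $\lambda(A)^{t,1}$. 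Here the $Y$-variable of $\xi_1$ is irrelevant, because $\xi_1$ is constant. Hence
\[
 \preuri(\xi_1,A)_r=A^{h,r}+\lambda(A)^{t,1}+A^{t}.
\]

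For the second term, I first check that $C(\xi_1)=0$. Every term $L_{m,r}$ or $R_{m,r}$ with $r\geq1$ is a divided difference of positive order of the constant $(\xi_1)_{m-r}$, so it vanishes, exactly as in the argument for $C(E)=0$. Thus $\urit(\xi_1)=\arit(\xi_1)=\amit(\xi_1)-\anit(\xi_1)$, and
\[
 \preuri(A,\xi_1)=\muop(A,\xi_1)+\amit(\xi_1)(A)-\anit(\xi_1)(A).
\]
The concatenation term is $A^{h}$. In $\amit(\xi_1)(A)$, a single extra letter $w_i$ sits immediately to the left of the marked letter $w_{i+1}$ inside its block. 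Its $Y$-variable merges with $Y_{i+1}$ and $X_i$ disappears, which is precisely $(\varphi_i^+A)_r$ for $1\leq i\leq r-1$. In $\anit(\xi_1)(A)$, the extra letter $w_{i+1}$ sits immediately to the right of the marked letter $w_i$. This deletes $X_{i+1}$ and merges $Y_i+Y_{i+1}$, giving $(\varphi_{i+1}^-A)_r$. Subtracting the two preuri terms yields the stated formula.

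The main obstacle is the bookkeeping of which variable is deleted and which $Y$-sum is formed in each one-letter block. This has to match the index conventions of $\varphi_i^\pm$, including the boundary cases $i=1$ and $i=r-1$. It also requires the sign convention $\arit(B)=\axit(B,-B)$ to produce $-\varphi_i^++\varphi_{i+1}^-$ rather than the opposite sign. I would settle this by checking the case $r=2$ directly against the marked-word formula for $\gaxit$.
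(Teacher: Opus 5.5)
Your proposal is correct and follows the paper's proof essentially step by step. You split $\uri(\xi_1,A)$ via \eqref{eq:uri-preuri}, use $C(\xi_1)=0$ so that $\urit(\xi_1)=\arit(\xi_1)$ produces the $\varphi^\pm$ terms from length-two blocks, and read off $A^{h,r}$ and $\lambda(A)^{t,1}$ from the single block in $\urit(A)(\xi_1)$. The only cosmetic difference is that you invoke $\urit(A)=\axit(A,\lambda(A))$ directly. The paper instead computes $A^{h,r}-A^{t,1}+C(A)^{t,1}$ and then applies \eqref{eq:C-lambda}, which amounts to the same identity.
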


\begin{proof}
Since $\xi_1$ is constant in depth one, all divided differences in \eqref{eq:C-definition} vanish, so $C(\xi_1)=0$ and $\urit(\xi_1)=\arit(\xi_1)$.  In the linearization of gaxit, a factor $(\xi_1)_1=1$ occupies a block of length two.  For amit the marked letter is the second one, which deletes $X_i$ and merges $Y_i+Y_{i+1}$, and for anit it is the first one, which deletes $X_{i+1}$ and merges $Y_i+Y_{i+1}$.  Hence $\arit(\xi_1)(A)_r=\sum_{i=1}^{r-1}(\varphi_i^+A-\varphi_{i+1}^-A)_r$. On the other hand, $\xi_1$ has depth one, so in $\urit(A)(\xi_1)$ the whole word is one block.  Its marked letter is the last one for amit and the first one for anit, which gives
\[
 \urit(A)(\xi_1)_r
 =A^{h,r}-A^{t,1}+C(A)^{t,1}
 =A^{h,r}+\lambda(A)^{t,1}
\]
by \eqref{eq:C-lambda}.  Finally, we have $\muop(\xi_1,A)_r=A^t$ and $\muop(A,\xi_1)_r=A^h$.  Inserting these into
\[
 \uri(\xi_1,A)=\preuri(\xi_1,A)-\preuri(A,\xi_1)
\]
proves the formula.
\end{proof}

\begin{proof}[Proof of \cref{thm:D}]
Inner derivations are derivations.  The map $\partial_{2r+1}$ is homogeneous of weight $2r+1$.  In particular, $\mathfrak d$ is a weight-graded Lie subalgebra. For every derivation $T$ of a Lie algebra and every element $x$, one has
\[
 [T,\operatorname{ad}_{\uri}(x)]
 =\operatorname{ad}_{\uri}(T(x)).
\]
The element $\xi_1$ belongs to $\mathfrak m_1$, so \cref{lem:sl2-identities} gives
\[
 \checkD\checkdelta^r(\xi_1)=r^2\checkdelta^{r-1}(\xi_1).
\]
Weight homogeneity and the definition of $\partial_{2r+1}$ give the other two commutator formulas.

The depth-two component of $\partial_1(\checkdelta(\xi_1))$ given above shows that $\partial_1$ is nonzero. Commuting $\partial_{2r+1}$ with $\checkD$ a total of $r$ times gives $(r!)^2\partial_1$.  Hence all the $\partial_{2r+1}$ are nonzero, and their distinct weights show that they are linearly independent.  The commutator formulas identify their span with the lowest-weight Verma module of lowest weight one.
\end{proof}

Equivalently, the whole family is given by the formal identity
\[
 \sum_{r\geq0}\frac{t^r}{r!}\,\partial_{2r+1}
 =e^{t\checkdelta}\partial_1e^{-t\checkdelta}
 =\operatorname{ad}_{\uri}\bigl(e^{t\checkdelta}\xi_1\bigr).
\]

Combining \eqref{eq:partial-odd} with \cref{prop:uri-n-projection} gives
\begin{equation}\label{eq:partial-uri-n}
 \operatorname{pr}_{\mathfrak m}\bigl(\partial_{2r+1}(A)\bigr)
 =\frac{(-1)^r}{\binom{2r+k-1}{r}}\,
  \uri_r(\xi_1,A)
 \qquad(A\in\mathfrak m_k).
\end{equation}
So, up to the scalar, $\uri_r(\xi_1,-)$ is the component in $\mathfrak m$ of the restriction of $\partial_{2r+1}$ to $\mathfrak m$.

\section{The double shuffle Lie algebra}\label{sec:dm0}

We first recall Racinet's double shuffle Lie algebra $\mathfrak{dm}_0$ (see \cite{Rac} and \cite{Bu1}*{Appendix~B.2}).  Let $\mathsf X=\{x_0,x_1\}$ and $\mathsf Y=\{y_1,y_2,\ldots\}$, where $y_n=x_0^{n-1}x_1$.  The coproducts $\Delta_{\!\shuffle}$ on $\QQ\langle\mathsf X\rangle$ and $\Delta_*$ on $\QQ\langle\mathsf Y\rangle$ are the algebra morphisms given by
\[
 \Delta_{\!\shuffle}(x_i)=x_i\otimes1+1\otimes x_i,
 \qquad
 \Delta_*(y_n)=y_n\otimes1+1\otimes y_n
  +\sum_{j=1}^{n-1}y_j\otimes y_{n-j}.
\]
Let
\[
 \Pi_Y:\QQ\langle\mathsf X\rangle
 \longrightarrow\QQ\langle\mathsf Y\rangle
\]
be the linear map with $\Pi_Y(1)=1$ which kills the words ending in $x_0$ and sends
\[
 x_0^{s_1-1}x_1\cdots x_0^{s_r-1}x_1
 \longmapsto y_{s_1}\cdots y_{s_r}.
\]
For $\psi\in\QQ\langle\mathsf X\rangle$, put
\[
 \psi_*=\Pi_Y(\psi)
 +\sum_{n\geq2}\frac{(-1)^{n-1}}{n}
  \bigl(\Pi_Y(\psi)\mid y_n\bigr)y_1^n,
\]
where $(\psi\mid w)$ denotes the coefficient of the word $w$ in $\psi$.  Then $\mathfrak{dm}_0$ consists of the polynomials $\psi$ which satisfy
\[
 \begin{gathered}
  (\psi\mid x_0)=(\psi\mid x_1)=(\psi\mid x_0x_1)=0,\\
  \Delta_{\!\shuffle}(\psi)=\psi\otimes1+1\otimes\psi,
  \qquad
  \Delta_*(\psi_*)=\psi_*\otimes1+1\otimes\psi_*.
 \end{gathered}
\]
Its weight is the total degree in $x_0,x_1$.  By Racinet, $\mathfrak{dm}_0$ is a weight-graded Lie algebra for the Ihara bracket
\[
 \{\psi,\phi\}_{\mathrm{Ih}}
 =d_\psi(\phi)-d_\phi(\psi)+[\psi,\phi],
 \qquad
 d_\psi(x_0)=0,\quad d_\psi(x_1)=[x_1,\psi]
\]
(see \cite{Bu1}*{Theorem~B.30}).

We now construct a bimould directly from an element of $\mathfrak{dm}_0$.  Let $\psi\in(\mathfrak{dm}_0)_k$ be homogeneous, and write
\[
 c_\psi(s_1,\ldots,s_r)
 =\bigl(\Pi_Y(\psi)\mid y_{s_1}\cdots y_{s_r}\bigr),
 \qquad a_\psi=c_\psi(k).
\]
Set $\iota(\psi)_0=0$ and, for $r\geq1$, define
\begin{align}\label{eq:dm-to-m}
 &\iota(\psi)_r
 \bi{X_1,\ldots,X_r}{Y_1,\ldots,Y_r}\notag\\
 &\quad=
 (-1)^{k-1}
 \sum_{\substack{s_1+\cdots+s_r=k\\s_i\geq1}}
 c_\psi(s_1,\ldots,s_r)
 \left(
  \prod_{i=1}^rX_i^{s_i-1}
  +\prod_{i=1}^r
   (Y_1+\cdots+Y_{r+1-i})^{s_i-1}
 \right)
 +\begin{cases}
   \dfrac{1}{k}a_\psi,&r=k,\\[4pt]
   0,&r\neq k.
  \end{cases}
\end{align}
Extend this definition linearly to $\mathfrak{dm}_0$.

Adjoin a formal central generator $\sigma_1$ of weight one to $\mathfrak{dm}_0$:
\begin{equation}\label{eq:dm-plus}
 \mathfrak{dm}_0^+
 =\QQ\sigma_1\oplus\mathfrak{dm}_0,
 \qquad
 \{\sigma_1,\psi\}_{\mathrm{Ih}}=0
 \quad(\psi\in\mathfrak{dm}_0^+).
\end{equation}
We extend $\iota$ by $\iota(\sigma_1)=\xi_1$.

On polynomial bimoulds, let the two Euler operators act componentwise by
\[
 (\EX A)_r=\left(\sum_{i=1}^rX_i\partial_{X_i}\right)A_r,
 \qquad
 (\EY A)_r=\left(\sum_{i=1}^rY_i\partial_{Y_i}\right)A_r.
\]
A monomial of total $X$-degree $p$ and total $Y$-degree $q$ is an eigenvector of $\EX\EY$ with eigenvalue $pq$.  Thus a polynomial has no mixed $X,Y$-monomials if and only if it belongs to $\ker(\EX\EY)$.

\begin{prop}\label{prop:dm-to-m}
After the extension in \eqref{eq:dm-plus}, the map in \eqref{eq:dm-to-m} is an injective weight-preserving linear map
\[
 \iota:\mathfrak{dm}_0^+
 \lhook\joinrel\longrightarrow
 \mathfrak d\cap\mathfrak m.
\]
Its image is exactly
\begin{equation}\label{eq:separated-image}
 \left\{A\in\mathfrak B
 \ \middle|\
 A_r=P_r(X_1,\ldots,X_r)+Q_r(Y_1,\ldots,Y_r)
 \text{ for every }r
 \right\}
 =\ker(\EX\EY)\cap\mathfrak B.
\end{equation}
The image of $\mathfrak{dm}_0$ is the part of this space in weights at least three.
\end{prop}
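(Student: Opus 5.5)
The plan is to verify the statement in four stages. First, $\iota(\psi)\in\mathfrak B$. Second, $\iota(\psi)\in\mathfrak m\cap\mathfrak d$. Third, injectivity. Fourth, the description of the image, proved by running the first stage backwards. Throughout, work with the coefficient map \eqref{eq:coefficient-expansion}.

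For the first stage, start with homogeneity. In depth $r$ the two products in \eqref{eq:dm-to-m} have degree $k-r$, and the constant sits in depth $k$, so everything has weight $k$. For swap invariance, compute the lower-row partial sums of the swapped variables. The swapped lower row is $(X_r,X_{r-1}-X_r,\ldots,X_1-X_2)$, so the sum of its first $r+1-i$ entries telescopes to $X_i$. Hence swap exchanges $\prod_i X_i^{s_i-1}$ and $\prod_i(Y_1+\cdots+Y_{r+1-i})^{s_i-1}$ term by term. The constant is fixed. For depth-one parity, use the standard fact that the depth-one part of an element of $\mathfrak{dm}_0$ vanishes in even weight. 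Then $a_\psi\neq0$ forces $k$ odd, and $X_1^{k-1}+Y_1^{k-1}$ is even. For $\sigma_1$ all of this is trivial.

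The core of the first stage is alternility, and I expect it to be the main obstacle. The coefficient map of $\iota(\psi)$ is supported on two kinds of words. The $X$-part lives on words in the letters $y_{s,0}$, where it equals $(-1)^{k-1}c_\psi$. After expanding the partial sums, the $Y$-part lives on words in the letters $y_{1,m}$. The correction $\frac1k a_\psi$ lives on $y_{1,0}^k$, the unique word of both kinds. I would check the vanishing on $u*v$ by cases on the types of $u$ and $v$.
\begin{itemize}
\item If $u$ and $v$ are both of $X$-type, the bi-stuffle reduces to the ordinary stuffle. Vanishing is then exactly $\Delta_*(\psi_*)=\psi_*\otimes1+1\otimes\psi_*$, and the $y_1^n$-correction in $\psi_*$ is precisely what the constant in depth $k$ must absorb. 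This is why the normalization $\frac1k a_\psi$ together with the $Y$-part's own contribution at $y_{1,0}^k$ has to be checked carefully.
\item If $u$ and $v$ are both of $Y$-type, the contracted letters $y_{2,m+n}$ carry zero coefficient except when they meet the $X$-part. Vanishing then becomes a shuffle identity for the $Y$-coefficients. Because the $Y$-part is the swap of the $X$-part, I would get this from the shuffle condition $\Delta_{\!\shuffle}\psi=\psi\otimes1+1\otimes\psi$ through the classical dictionary between alternality of $\swapop$ of a polynomial mould and shuffle-primitivity of $\psi$.
\item If $u$ and $v$ are of mixed type, the only surviving terms have weight $k$ and come from letters $y_{1,0}$. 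These should reduce to the previous cases or to the conditions $(\psi\mid x_0)=(\psi\mid x_1)=(\psi\mid x_0x_1)=0$.
\end{itemize}
Instead of doing these cases by hand, I would first try to cite the known identification of $\mathfrak{dm}_0$ with the polar-free part of $\ARI_{\underline{\mathrm{al}}*\underline{\mathrm{il}}}$ and transport it along $\swapop$.

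For the second stage, membership in $\mathfrak m$ is immediate. The image has no mixed monomials, so $\checkD=\sum_i\partial_{X_i}\partial_{Y_i}$ kills it. Membership in $\mathfrak d$ should come from \cref{lem:partial-one} applied to $A$ with $A_r=P_r(X)+Q_r(Y)$. The $X$-terms $A^{h,r}-A^h$, $\varphi^\pm$ and $\lambda(A)^{t,1}$ and the $Y$-terms would be shown to cancel separately, using the recursion \eqref{eq:lambda-recursion} together with the alternility and swap identities already established. This proves the more general claim that $\partial_1$ vanishes on $\ker(\EX\EY)\cap\mathfrak B$, which is also what the image statement needs.

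For injectivity, the $X$-part of $\iota(\psi)$ recovers $\Pi_Y(\psi)$. A shuffle-primitive $\psi$ with vanishing constant conditions is determined by $\Pi_Y(\psi)$, by the standard regularization argument. The weight-one element $\sigma_1$ maps to $\xi_1$, which is separated by weight. For the image, take $A\in\ker(\EX\EY)\cap\mathfrak B$. Define $c$ from $P_r$, let $\psi$ be the corresponding element, and use swap invariance to see that $Q_r$ is forced to be the swap of $P_r$ apart from the depth-$k$ constant. Alternility then gives the stuffle and shuffle primitivity, so $\psi\in\mathfrak{dm}_0$ by reading the alternility computation backwards. The low-weight conditions follow from weight at least three, while weights one and two contribute only $\QQ\xi_1$, since $\mathfrak B_2\cap\ker(\EX\EY)=0$ by \cref{rem:E}.
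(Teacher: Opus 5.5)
\textbf{Your second stage has a genuine gap.} Carry out the computation you propose. Write $A_r=P_r(X)+Q_r(Y)$, with the constants put into $P_r$.

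\begin{itemize}
\item By \eqref{eq:lambda-recursion}, $\lambda(A)$ is again separated, and its $Y$-part is exactly $-Q$. Call its $X$-part $\lambda(A)^X$.
\item In \cref{lem:partial-one} the $Y$-parts therefore cancel identically.
\item The $X$-parts of the $\varphi^\pm$-sum telescope against $-A^h+A^t$.
\end{itemize}

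What remains is
\[
 (\partial_1A)_r=P_{r-1}(X_1-X_r,\ldots,X_{r-1}-X_r)
 +\lambda(A)^X_{r-1}(X_2-X_1,\ldots,X_r-X_1).
\]
Its vanishing is a push-invariance type identity for the $X$-part alone. It does \emph{not} follow from alternility and depth-one parity. For the alternil bimould $(0,X_1-X_2,0,\ldots)$ of weight three, whose depth-one part is zero, the right-hand side at $r=3$ equals $X_1-2X_2+X_3$.

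So the alternality of $\swapop(P)$ must enter in an essential way. That input is the whole content of the Eisenstein relation $\uri(\xi_1,\iota(\psi))=0$, and ``would be shown to cancel'' does not supply it. The paper gets this step from the $\mathfrak{bm}_0$ argument of \cite{BKa}*{Proposition~9.1(iii)}, which uses \cite{BKa}*{Theorem~B} and the involution $\tau$, transported to bimoulds by $\Phi$.

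Your own computation can be finished more cheaply:
\begin{itemize}
\item $\xi_1\in\mathfrak B$ and $\mathfrak B$ is closed under uri by \cite{BKa}*{Theorem~A(ii)}, so $\partial_1A\in\mathfrak B$.
\item By the display, the components of $\partial_1A$ depend only on the $X$-variables.
\item Swap invariance then forces them to be constants.
\item A constant alternil bimould of weight at least two vanishes.
\end{itemize}

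\textbf{A second, smaller gap is in the image step.} The sentence ``let $\psi$ be the corresponding element'' hides the main difficulty. $\Pi_Y$ forgets the words ending in $x_0$, so the $X$-part only prescribes $\Pi_Y(\psi)$. Reading the alternility computation backwards does not produce a Lie polynomial $\psi$ with this image.

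What you need is the surjectivity of the $\operatorname{ma}$ correspondence \cite{Bu1}*{Theorem~B.68}, applied to the alternal $Y$-part. This is where the elimination alphabet $C_j=\operatorname{ad}(x_0)^{j-1}(x_1)$ and the identity $P_{\psi,r}(S_r,\ldots,S_1)=(A_\psi)_r$ are used. You must then still match the depth-$k$ constant with $\frac1ka_\psi$. The paper does this by uniqueness of the constant mould $C$ making $\swapop(A)+C$ alternil, combined with $\iota(\psi)\in\mathfrak B$.

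\textbf{Your first stage is sound.} It is a valid alternative to the paper's comparison $\iota=(-1)^{k-1}\Phi\circ\theta$:
\begin{itemize}
\item the $X$-words give exactly $\Delta_*$-primitivity of $\psi_*$;
\item the $Y$-words give alternality of $A_\psi$;
\item your ``mixed'' case is actually trivial. If $uv$ contains a letter with first index at least two and a letter with positive second index, then every term of $u*v$ carries a mixed monomial, so it needs none of the low-degree conditions.
\end{itemize}
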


\begin{proof}
We first compare $\iota$ with the embedding $\theta:\mathfrak{dm}_0\to\mathfrak{bm}_0$ of Burmester \cite{Bu1}*{Theorem~4.28} into her balanced double shuffle Lie algebra, and with the bimould realization $\Phi:\mathfrak{bm}_0\to\mathfrak B$ of \cite{BKa}*{Section~8}.  Recall that $\theta(\psi)=\theta_X^{\mathrm{anti}}(\psi)+\theta_Y(\psi_*)$, where $\theta_X^{\mathrm{anti}}$ sends a word $x_{\epsilon_1}\cdots x_{\epsilon_l}$ to $b_{\epsilon_l}\cdots b_{\epsilon_1}$ and $\theta_Y$ sends $y_s$ to $b_s$, and that $\Phi(f)$ is obtained by reading the coefficients of the words $b_{k_1}b_0^{m_1}\cdots b_{k_r}b_0^{m_r}$, $k_i\geq1$, of $f$ as the coefficients of $\prod_iX_i^{k_i-1}Y_i^{m_i}$ and then substituting $Y_1+\cdots+Y_i$ for $Y_i$.  By \cite{Bu1}*{Theorem~4.28} and \cite{BKa}*{Equation~(8.2)}, the map $\Phi\circ\theta$ is an injection of $\mathfrak{dm}_0$ into $\mathfrak B$.  We claim that
\begin{equation}\label{eq:iota-phi-theta}
 \iota(\psi)=(-1)^{k-1}\,\Phi(\theta(\psi))
 \qquad(\psi\in(\mathfrak{dm}_0)_k).
\end{equation}
The words of $\theta_Y(\psi_*)$ contain no $b_0$, so they are read with all $m_i=0$ and give $\sum c_\psi(s_1,\ldots,s_r)\prod_iX_i^{s_i-1}$, together with the constant $\frac{(-1)^{k-1}}{k}a_\psi$ in depth $k$ coming from the correction term of $\psi_*$.  Only the words not beginning with $b_0$ are read, so from $\theta_X^{\mathrm{anti}}(\psi)$ only the words of $\psi$ ending in $x_1$ contribute.  The word $x_0^{s_1-1}x_1\cdots x_0^{s_r-1}x_1$ is sent to $b_1b_0^{s_r-1}\cdots b_1b_0^{s_1-1}$, which is read as $\prod_iY_i^{s_{r+1-i}-1}$, and the substitution gives $\prod_i(Y_1+\cdots+Y_{r+1-i})^{s_i-1}$.  Comparing with \eqref{eq:dm-to-m} proves \eqref{eq:iota-phi-theta}.  Hence $\iota(\psi)\in\mathfrak B$, and $\iota$ is injective on $\mathfrak{dm}_0$.  Since $\iota$ preserves the weight and $\xi_1$ has weight one, it is injective on $\mathfrak{dm}_0^+$.

Next we show that $\xi_1$ commutes with $\iota(\psi)$.  The argument is that of \cite{BKa}*{Proposition~9.1(iii)}.  Put $h=\{\theta(\psi),b_1\}_{\mathrm b}$, the bracket of \cite{BKa}*{Section~8}.  By construction, every word of $\theta(\psi)$ either lies in $\QQ\langle b_0,b_1\rangle$ or contains no $b_0$.  The words of the first kind have zero bracket with $b_1$ by \cite{Bu1}*{Lemma~3.15(i)}, while the derivation defining the bracket preserves the words without $b_0$.  Hence $h$ contains no $b_0$.  By \cite{BKa}*{Theorem~B}, $h\in\mathfrak{bm}_0$, so it is primitive and its projection is invariant under the involution $\tau$ of \cite{BKa}*{Section~8}, which sends every word without $b_0$ other than a power of $b_1$ to a word containing $b_0$.  Thus $h$ is a multiple of $b_1^{k+1}$, which is not primitive, and $h=0$.  Since $\Phi$ intertwines the two brackets by \cite{BKa}*{Lemma~8.2}, we obtain $\uri(\xi_1,\iota(\psi))=0$.  Together with $\iota(\sigma_1)=\xi_1$, this shows that the image of $\iota$ lies in $\mathfrak d$.

Every component in \eqref{eq:dm-to-m} is the sum of a polynomial in the $X$-variables, a polynomial in the $Y$-variables and a constant.  Therefore
\[
 \checkD\bigl(\iota(\psi)\bigr)
 =\sum_i\partial_{X_i}\partial_{Y_i}
 \bigl(\iota(\psi)\bigr)=0,
\]
which proves that $\iota(\psi)\in\mathfrak m$.

We now describe the $Y$-part of $\iota(\psi)$.  For $j\geq1$, put $C_j=\operatorname{ad}(x_0)^{j-1}(x_1)$, and let $A_\psi=\operatorname{ma}(\psi)$ be the mould of \cite{Bu1}*{Definition~B.67}, regarded as a bimould which only depends on the $Y$-variables.  Set
\[
 P_{\psi,r}(Z_1,\ldots,Z_r)
 =\sum_{\substack{s_1+\cdots+s_r=k\\s_i\geq1}}
 c_\psi(s_1,\ldots,s_r)\prod_{i=1}^rZ_i^{s_i-1}.
\]
Writing $\psi^{(r)}$ for the part of $\psi$ of degree $r$ in the alphabet $\{C_1,C_2,\ldots\}$, expansion of the $C_j$ gives
\[
 P_{\psi,r}(Z_1,\ldots,Z_r)
 =\rho_C(\psi^{(r)})(Z_1-Z_2,\ldots,Z_{r-1}-Z_r,Z_r),
\]
where $\rho_C$ is the coefficient map from \cite{Bu1}*{Definition~B.67}.  Since $\psi^{(r)}$ is primitive, its antipode shows that reversing its $C$-words multiplies it by $(-1)^{r-1}$.  Therefore, with $S_i=Y_1+\cdots+Y_i$, we obtain
\[
 P_{\psi,r}(S_r,\ldots,S_1)
 =(-1)^{r-1}\rho_C(\psi^{(r)})(Y_1,\ldots,Y_r)
 =(A_\psi)_r(Y_1,\ldots,Y_r).
\]
The swap of this polynomial is $P_{\psi,r}(X_1,\ldots,X_r)$.  Hence
\begin{equation}\label{eq:iota-ma}
 (-1)^{k-1}\iota(\psi)
 =A_\psi+\swapop(A_\psi)+C_\psi,
\end{equation}
where $C_\psi$ is the constant bimould with the single component $\frac{(-1)^{k-1}}{k}a_\psi$ in depth $k$.  The components of $A_\psi$ have no constant term, since $\psi$ is a Lie polynomial and $(\psi\mid x_1)=0$.

It remains to identify the image among the separated bimoulds.  In weight one, the separated elements of $\mathfrak B_1$ form $\QQ\xi_1$, while there is no such element in $\mathfrak B_2$.  Let $F\in\mathfrak B_k$ be a separated homogeneous element with $k\geq3$. In every depth there is a unique decomposition
\[
 F_r=A_r(Y_1,\ldots,Y_r)+B_r(X_1,\ldots,X_r)+C_r,
\]
where $A_r$ and $B_r$ have zero constant term.  Swap invariance gives $B=\swapop(A)$.  Restricting the alternil coefficient map of $F$ to words in the letters $y_{a,0}$ shows that $\swapop(A)+C$ is alternil.

We next apply alternility to stuffle products of words in the letters $y_{1,m}$.  Suppose first that the sum of the second indices is positive. Every term containing a merger then has a first index greater than one and a positive second index somewhere.  Its coefficient in $F$ vanishes, since it would be a mixed monomial.  The remaining terms give exactly the shuffle relation for $A$.  When the sum of the second indices is zero, the same relation holds because $A$ has no constant term.  Thus $A$ is alternal.  The depth-one parity of $F$ also shows that $A_1$ is even. These properties say that $A$ belongs to the Lie algebra $\ARI^{\mathrm{pol}}_{\underline{\mathrm{al}}*\underline{\mathrm{il}}}$ of \cite{Bu1}*{Theorem~B.45}, whose elements are the alternal polynomial moulds with even depth-one component whose swap is alternil up to a constant mould.  By \cite{Bu1}*{Theorem~B.68}, there is a unique $\psi\in(\mathfrak{dm}_0)_k$ with $A=\operatorname{ma}(\psi)=A_\psi$. The constant mould $C$ is unique with the property that $\swapop(A)+C$ is alternil.  To see this, notice that a constant alternil mould has no component in depth at least two, which follows by applying its coefficient map to the stuffle product of $y_{1,0}$ and $y_{1,0}^{r-1}$, and that no depth-one constant occurs since $k\geq3$. Applying the same restriction argument to $(-1)^{k-1}\iota(\psi)\in\mathfrak B$ shows that $\swapop(A_\psi)+C_\psi$ is alternil, so $C=C_\psi$, and \eqref{eq:iota-ma} gives $F=(-1)^{k-1}\iota(\psi)$.
\end{proof}

The argument at the end of the proof shows more generally that a separated element of $\mathfrak B$ of weight at least two is determined by the parts of its components which depend only on the $Y$-variables and have no constant term.  For the proof of \cref{thm:E} we also need the following two facts.  For a polynomial bimould $F$, let $F|_{X=0}$ be the bimould obtained by setting all $X$-variables equal to zero.

\begin{lem}\label{lem:x-free}
\begin{enumerate}
\item[(i)] Let $F,G\in\BARI^{\mathrm{pol}}$ have no mixed
$X,Y$-monomials.  Then $\uri(F,G)|_{X=0}=\ari(F|_{X=0},G|_{X=0})$.
\item[(ii)] Let $A,B$ be polynomial bimoulds depending only on the
$Y$-variables whose components have no constant term, and let $C,C'$ be bimoulds whose components are constants.  Then $\ari(A+C,B+C')=\ari(A,B)$.
\end{enumerate}
\end{lem}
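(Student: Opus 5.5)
For (i), I would use the formula $\uri(F,G)=\preuri(F,G)-\preuri(G,F)$ from \cref{prop:preuri-comparison}, with $\preuri(A,B)=\urit(B)(A)+\muop(A,B)$ and $\urit(B)=\arit(B)+\anit(C(B))$. Setting $X=0$ commutes with $\muop$, so the task reduces to two facts: $\arit(G)(F)|_{X=0}=\arit(G|_{X=0})(F|_{X=0})$, and $\anit(C(G))(F)|_{X=0}=0$.

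For the first fact, I would inspect the arguments produced by gaxit. In each block the marked letter keeps $X_p$, and the other letters become $X_j-X_p$. All $X$-arguments are therefore linear forms in the $X$-variables alone, and the $Y$-arguments are sums of $Y$'s only. Setting $X=0$ thus sends every factor $F(\ldots)$ or $G(\ldots)$ to the same expression built from $F|_{X=0}$ and $G|_{X=0}$. For $F$ and $G$ polynomial, the flexions of $\arit$ introduce no denominators, so the specialization is legitimate termwise.

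The second fact is the main obstacle. My approach is to show that $C(G)|_{X=0}=0$ when $G$ has no mixed monomials. Write $G=G^X+G^Y+G^c$, splitting off the pure-$X$ part, the pure-$Y$ part without constant term, and the constant part.
\begin{itemize}
\item The $G^Y$ part does not depend on $t$, so every divided difference of positive order of it vanishes and $C(G^Y)=0$.
\item The constant part $G^c$ is also independent of $t$, so the same argument gives $C(G^c)=0$.
\item The $G^X$ part gives divided differences in the nodes $0,X_1,\ldots$ of a function of the form $t\mapsto G^X(X_r-t,\ldots)$. These are polynomials in the $X$'s, by the polynomial divided-difference formula, and they have no $Y$-dependence. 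They need not vanish at $X=0$: for example the leading coefficient survives, since the divided difference of order $r$ of $t^{r}$ is $1$.
\end{itemize}
So $C(G)$ need not vanish, and I would instead argue degree by degree. A pure-$X$ polynomial of degree $d$ has divided differences of order $r$ that are homogeneous of degree $d-r$. At $X=0$ these survive only when $d=r$, where they give constants.

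It then remains to check that $\anit$ of a constant bimould $K$, evaluated at $X=0$, contributes nothing, or cancels. In $\anit(K)(F)$ the marked letter of each block is the first one, and the remaining letters carry $K$-factors. Those factors are constants, and $F$ is evaluated at the merged $Y$-sum. So this contribution is not obviously zero, and it must cancel against the matching term of $\arit$ coming from the top-degree $X$-part of $G$. Concretely, I would verify in $\urit(G)$ that at $X=0$ the contributions $\anit(-G^X)$ and $\anit(C(G^X))$ cancel. The hope is that $C(B)=B+\lambda(B)$ from \eqref{eq:C-lambda} makes $\urit(G)=\axit(G,\lambda(G))$, so the cleaner statement to prove is
\[
\lambda(G)|_{X=0}=-G|_{X=0}.
\]
Since the recursion \eqref{eq:lambda-recursion} involves differences divided by $X_1$ or $X_m$, I would check this by induction on depth, treating the pure-$X$, pure-$Y$ and constant parts separately. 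For the pure-$Y$ and constant parts the difference quotients vanish. For the pure-$X$ part, the quotients evaluated at $X=0$ are directional derivatives. I expect bookkeeping for this part to be the delicate point.

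For (ii), I would expand $\ari(A+C,B+C')$ bilinearly and show that every term involving $C$ or $C'$ vanishes.
\begin{itemize}
\item In $\arit(C')(A)$ and $\arit(C')(C)$, the amit and anit pieces give the same contraction on $Y$-sums, since the only difference between them is which letter carries the $X$-variable, and $A$ and $C$ ignore $X$. These pieces therefore cancel.
\item For the same reason, $\arit(B)(C)$ vanishes.
\item The concatenation terms $\muop(A,C')-\muop(C',A)$ remain. They should cancel against $\arit(A)(C')-\arit(C')(A)$ by the block structure: the amit term of $\arit(A)(C')$ with the whole word as one block reproduces $\muop(C',A)$, and similarly for the other ordering.
\end{itemize}
I would verify this cancellation directly from the marked-block expansion.
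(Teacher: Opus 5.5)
\textbf{(i): the pure-$X$ part of $C(G)$ is not handled.} Your reduction is correct: setting $X=0$ commutes with $\muop$, $\arit$ and $\anit$, so everything hinges on $C(G)|_{X=0}=0$. Your treatment of the pure-$Y$ and constant parts of $G$ is also fine. The gap is the pure-$X$ part. You note that $L_{m,r}(G)$ and $R_{m,r}(G)$ survive at $X=0$ and conclude that $C(G)|_{X=0}$ may be a nonzero constant. The missing point is that they survive \emph{equally}. At $X=0$ all nodes in \eqref{eq:Lmr} and \eqref{eq:Rmr} coincide, so both are $(-1)^r$ times the coefficient of $t^r$ in $G_{m-r}(-t,\ldots,-t\mid\cdot)$, only evaluated at different $Y$-variables. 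A monomial in the $X$-variables alone therefore contributes the same constant to both, and $L_{m,r}(G)-R_{m,r}(G)$ vanishes at $X=0$ for every $r$. For $G_1=X_1$, for instance, one gets $L_{2,1}=R_{2,1}=1$. Hence $C(G)|_{X=0}=0$.

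Your proposed detour cannot replace this. You have already shown $\arit(G)(F)|_{X=0}=\arit(G|_{X=0})(F|_{X=0})$, and $\anit(G^X)(F)|_{X=0}=\anit(G^X|_{X=0})(F|_{X=0})=0$ because $G^X$ has no constant term. So there is nothing in $\arit(G)(F)|_{X=0}$ for a constant $C(G)|_{X=0}$ to cancel against. Your ``cleaner statement'' $\lambda(G)|_{X=0}=-G|_{X=0}$ is, by \eqref{eq:C-lambda}, just $C(G)|_{X=0}=0$ again. An induction on values at $X=0$ also does not close, since \eqref{eq:lambda-recursion} evaluated at $X=0$ involves the derivative $\sum_j\partial_{X_j}\lambda(G)_{m-1}$ there. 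It does close if you restrict to the diagonal $X_1=\cdots=X_m=u$ instead. Then $h_m(u)=C(G)_m|_{X_i=u}$ satisfies $h_m(u)=\bigl(h_{m-1}(u)-h_{m-1}(0)\bigr)/u$ by \eqref{eq:C-recursion}, because the two $G_{m-1}$-quotients there only see the pure-$X$ part of $G_{m-1}$ and cancel. Since $h_1=0$, this gives $h_m=0$.

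\textbf{(ii): your second bullet is false.} $\arit(B)(C)$ does not vanish. Take $C=c$ concentrated in depth $l$ and $B$ in depth $s$. The $\amit$-term with the $B$-letters just before the $i$-th letter of $c$ sees $Y_i,\ldots,Y_{i+s-1}$. The $\anit$-term with them just after that letter sees $Y_{i+1},\ldots,Y_{i+s}$. These do not pair off; instead the sum telescopes to
$\arit(B)(c)_{l+s}=c\,\bigl(B_s(Y_1,\ldots,Y_s)-B_s(Y_{l+1},\ldots,Y_{l+s})\bigr)=\bigl(\muop(B,c)-\muop(c,B)\bigr)_{l+s}$,
which is already nonzero for $l=s=1$. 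This term is exactly what cancels the concatenation terms of $\ari(C,B)$. With your vanishing claim, $\ari(C,B)=\muop(C,B)-\muop(B,C)\neq0$ would survive.

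Your third bullet in fact uses the correct identity for $\arit(A)(C')$, so the two bullets contradict each other. Also, for $C'$ of depth at least two there is no single block covering the whole word; the cancellation is the telescoping over the $l$ positions described above. With this correction, together with $\arit(c')(c)=0$ and $\muop(c,c')=\muop(c',c)$ for constants, (ii) goes through, and this is the paper's argument.
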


\begin{proof}
(i) Setting $X=0$ is a ring homomorphism, and all substitutions in the definitions of $\muop$ and $\gaxit$ are homogeneous linear in the $X$-variables.  Hence they commute with setting $X=0$, and the same holds for $\arit$ and $\anit$.  By \eqref{eq:uri-preuri} and \eqref{eq:preuri-definition}, it remains to show $C(G)|_{X=0}=0$.  In \eqref{eq:Lmr} and \eqref{eq:Rmr}, all nodes become zero, and a divided difference of order $r$ with coincident nodes is $(-1)^r$ times the coefficient of $t^r$.  Thus, up to the common factor $(-1)^r$, $L_{m,r}(G)|_{X=0}$ is the coefficient of $t^r$ in $G_{m-r}(-t,\ldots,-t\mid Y_r,\ldots,Y_{m-1})$, and $R_{m,r}(G)|_{X=0}$ is the coefficient of $t^r$ in $G_{m-r}(-t,\ldots,-t\mid Y_{r+1},\ldots,Y_m)$. A monomial of $G_{m-r}$ in the $X$-variables alone contributes the same constant to both, and a monomial in the $Y$-variables alone contributes nothing, since $r\geq1$.  Hence $C(G)|_{X=0}=0$.

(ii) By linearity and antisymmetry it suffices to show that $\ari(A,c)=0$ and $\ari(c,c')=0$ for constants $c$ and $c'$ concentrated in depths $l$ and $l'$.  In $\arit(c)(A)$ the block of $c$ has length $l$, and the terms of amit and of anit both run over all ways of merging $l+1$ consecutive $Y$-variables in an argument of $A$, since $A$ does not depend on the $X$-variables.  Hence $\arit(c)(A)=0$ and $\preari(A,c)_{l+s}=c\,A_s(Y_1,\ldots,Y_s)$.  In $\arit(A)(c)$ the block of $A$ has length $s$, and the $l$ positions of this block relative to the letters of $c$ give
\[
 \arit(A)(c)_{l+s}
 =c\sum_{j=1}^{l}\bigl(A_s(Y_j,\ldots,Y_{j+s-1})-A_s(Y_{j+1},\ldots,Y_{j+s})\bigr)
 =c\bigl(A_s(Y_1,\ldots,Y_s)-A_s(Y_{l+1},\ldots,Y_{l+s})\bigr),
\]
so $\preari(c,A)_{l+s}=c\,A_s(Y_1,\ldots,Y_s)$ as well, and $\ari(A,c)=0$.  For two constants, $\arit(c')(c)=0$ by the same counting, and $\muop(c,c')=\muop(c',c)$, so $\ari(c,c')=0$.
\end{proof}
We now prove that the injection in \cref{prop:dm-to-m} is an isomorphism onto $\mathfrak d$.  The defining conditions of $\mathfrak B$ play no role in the argument.

\begin{thm}\label{thm:linear-kernel}
If $A\in\BARI^{\mathrm{pol}}$ satisfies $\uri(\xi_1,A)=0$, then no component of $A$ contains a mixed $X,Y$-monomial.
\end{thm}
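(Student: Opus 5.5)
Since $\partial_1$ has weight one and preserves the bigrading by total $X$-degree and total $Y$-degree only up to a controlled error, I would argue by a leading-term (filtration) argument on the mixed part. Using \cref{lem:partial-one}, write
\[
 (\partial_1A)_r=A^{h,r}-A^h+A^t+\lambda(A)^{t,1}-\sum_{i=1}^{r-1}\bigl(\varphi_i^+A-\varphi_{i+1}^-A\bigr)_r .
\]
All operations here are linear substitutions, divided differences in $X$, and contractions. Each of them either preserves the $Y$-degree or lowers the $X$-degree through the division by $X_1$ or $X_m$ in \eqref{eq:lambda-recursion}. Since $\partial_1$ raises weight by one, I would decompose $A=\sum_{p,q}A^{(p,q)}$ by $X$-degree $p$ and $Y$-degree $q$ (depth-componentwise). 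I would then order the pieces first by depth and then by $X$-degree, and compare the corresponding pieces of $\partial_1 A$.

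First, I would reduce to a single depth and a single bidegree. By linearity and the depth structure, a term of depth $r$ in $\partial_1A$ comes from $A_r$ (via $A^h,A^t,A^{h,r},\lambda^{t,1}$ applied to $A_{r-1}$, and contractions of $A_{r-1}$). So let $n$ be the largest depth in which $A_n$ has a mixed monomial, and among those monomials take the part of highest $Y$-degree $q\geq1$ (equivalently lowest $X$-degree $p\ge1$). I would then look at $(\partial_1A)_{n+1}$ and isolate the contribution of this extremal piece of $A_n$ in a fixed bidegree. The terms $A^h$, $A^t$, $A^{h,n+1}$ and the contractions $\varphi^\pm$ preserve $(p,q)$. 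The term $\lambda(A)^{t,1}$ is $-A^{t,1}$ plus divided-difference terms of strictly lower $X$-degree, hence of strictly higher $Y$-degree relative to weight. By maximality of $q$, those lower terms fall outside the bidegree under consideration, except for contributions from higher-depth components, and those are excluded by the choice of $n$ if I also handle depth $n+1$ components appropriately.

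The resulting leading identity is then a linear equation on the single polynomial $f=A_n^{(p,q)}$:
\[
 f^{h,n+1}-f^h+f^t-f^{t,1}-\sum_i(\varphi_i^+f-\varphi_{i+1}^-f)+(\text{contribution of }A_{n+1}^{(p',q')})=0 .
\]
To kill the $A_{n+1}$ contributions, I would specialize to a coordinate slice, for instance setting $Y_{n+1}=0$ or taking the coefficient of the highest power of $X_{n+1}$, where the $A_{n+1}$-terms are absent. I would then show that this operator is injective on polynomials with $p,q\geq1$. Concretely, $f^{h}-f^{h,n+1}$ is a difference operator in $X_{n+1}$ that raises nothing, while $f^t-f^{t,1}$ is a shift by $X_1$. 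Extracting the top-degree part in $X_{n+1}$ and in $X_1$ should yield a triangular system whose diagonal entries are nonzero precisely when $f$ has both $X$- and $Y$-degree positive. In contrast, pure $X$ or pure $Y$ monomials cancel, which matches the separated elements of \cref{prop:dm-to-m} lying in the kernel.

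I expect this last step to be the main obstacle: arranging the filtration so that both the lower-order terms of $\lambda$ and the contributions of $A_{n+1}$ genuinely drop out, and then verifying the injectivity of the leading difference operator on mixed polynomials. I would first try the simplest case $n=1$ by hand, where $\lambda(A)_2$ is explicit, and use it to guess the correct monomial order, for example lexicographic in $(Y_{\mathrm{last}},X_1)$. I would then run an induction on depth.
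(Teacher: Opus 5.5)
There is a genuine gap, already in the reduction step.

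\textbf{The lower-depth terms are not isolated.} By \cref{lem:partial-one}, $(\partial_1A)_{n+1}$ depends only on $A_1,\ldots,A_n$, so there is no contribution of $A_{n+1}$ to remove. It does, however, depend on \emph{all} lower components through $\lambda(A)_n$.

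Nothing in \eqref{eq:lambda-recursion} touches the $Y$-variables. Each division by $X_1$ or $X_m$ lowers the $X$-degree by one while passing from depth $m-1$ to depth $m$. So a piece of $A_{n-j}$ of bidegree $(p+j,q)$, which has the same weight as $A_n^{(p,q)}$, contributes to $(\partial_1A)_{n+1}$ in exactly the bidegree $(p,q)$.

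Your claim that these divided-difference terms have strictly higher $Y$-degree is therefore false. Maximality of $q$ in $A_n$ says nothing about the mixed pieces of $A_{n-1},A_{n-2},\ldots$. You have not excluded those, since $n$ was chosen as the \emph{largest} mixed depth. So you never obtain an equation on $f=A_n^{(p,q)}$ alone.

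The induction has to run upward from the lowest depth. One has $(\partial_1A)_{r+1}=\Lambda_r(A_r)+q^{t,1}$, where
$\Lambda_r(f)=f^{h,r+1}-f^h+f^t-f^{t,1}+\sum_{i=1}^r(\varphi_{i+1}^-f-\varphi_i^+f)$
and $q=\lambda(A)_r+A_r$ is built from $A_1,\ldots,A_{r-1}$ only. If these have no mixed monomials, neither does $q$. Since $\Lambda_r$ preserves bidegree, every mixed bidegree piece of $A_r$ then lies in $\ker\Lambda_r$.

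\textbf{The injectivity step is missing.} The remaining gap is injectivity of $\Lambda_n$ on polynomials of bidegree $(a,b)$ with $a,b\geq1$. This is the real content, and you leave it as a hope.

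Your suggested extraction of the top degree in $X_{n+1}$ does not give a triangular system. In that degree the terms $f^t-f^{t,1}$ and all pairs $\varphi_{i+1}^-f-\varphi_i^+f$ with $i<n$ cancel. Only one relation survives, between $f(1,\ldots,1\mid Y)$ and the coefficient of $X_n^a$, and it does not determine $f$. The same happens in top $X_1$-degree.

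What works is to separate the translation-invariant terms from the rest:
\begin{itemize}
\item[(1)] Substitute $X_j\mapsto X_j+u$ and write $f(X+u\mid Y)=\sum_m u^mf_m$. The terms $f^{h,n+1}$ and $f^{t,1}$ only see differences, so they are unchanged.
\item[(2)] The coefficients of $u^m$ for $m\geq1$ then give $\Gamma_n(f_m)=0$, where $\Gamma_n(f)=-f^h+f^t+K_n(f)$.
\item[(3)] A lexicographic leading-term argument shows $\Gamma_n$ is injective on polynomials with nonzero $Y$-exponent. Hence $f$ is translation invariant; for $n=1$ this already contradicts $a\geq1$.
\item[(4)] For $n\geq2$, translation invariance gives $\Lambda_n(f)=K_n(f)$.
\item[(5)] A second leading-term argument, in the difference variables $X_j-X_{j+1}$, shows that $K_n$ is injective when the $X$-degree is positive.
\end{itemize}
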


We first reduce the statement to a polynomial in one depth.  For $n\geq1$, let
\[
 R_n=\QQ[X_1,\ldots,X_n,Y_1,\ldots,Y_n],
\]
and let $R_n^{a,b}$ be its subspace of polynomials of total $X$-degree $a$ and total $Y$-degree $b$.  Regarding $p\in R_n$ as a bimould concentrated in depth $n$, put
\begin{equation}\label{eq:Lambda}
 \Lambda_n(p)
 =p^{h,n+1}-p^h+p^t-p^{t,1}+K_n(p),
 \qquad
 K_n(p)=\sum_{i=1}^{n}\bigl(\varphi_{i+1}^-p-\varphi_i^+p\bigr)_{n+1},
\end{equation}
in the notation of \cref{lem:partial-one} with $r=n+1$.  Then $\Lambda_n(p)\in R_{n+1}$.  Every substitution in \eqref{eq:Lambda} is linear in the $X$-variables and linear in the $Y$-variables, so $\Lambda_n$ maps $R_n^{a,b}$ to $R_{n+1}^{a,b}$.

\begin{lem}\label{lem:Lambda-reduction}
\cref{thm:linear-kernel} holds if $\Lambda_n$ is injective on $R_n^{a,b}$ for all $n\geq1$ and all $a,b\geq1$.
\end{lem}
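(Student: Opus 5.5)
We will run a leading-term argument in depth. Let $A\in\BARI^{\mathrm{pol}}$ with $\uri(\xi_1,A)=0$. Suppose, for contradiction, that some component contains a mixed monomial. Let $n$ be the largest depth in which a mixed monomial occurs; $n$ exists because $A$ has only finitely many nonzero components. Among the mixed monomials of $A_n$, fix a bidegree $(a,b)$ with $a,b\geq1$ for which the part $p\in R_n^{a,b}$ of $A_n$ is nonzero. The aim is to show that the $(a,b)$-bigraded part of $(\partial_1A)_{n+1}$ equals $\Lambda_n(p)$. By hypothesis that part vanishes, so the assumed injectivity of $\Lambda_n$ forces $p=0$, a contradiction.

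The first step is to apply \cref{lem:partial-one} in depth $r=n+1$:
\[
 (\partial_1A)_{n+1}=A^{h,n+1}-A^h+A^t+\lambda(A)^{t,1}-\sum_{i=1}^{n}\bigl(\varphi_i^+A-\varphi_{i+1}^-A\bigr)_{n+1}.
\]
In this formula $A^h$, $A^{h,n+1}$, $A^t$ and the contraction terms involve only $A_n$. Each substitution is linear separately in the $X$- and $Y$-variables, so all of these preserve the bidegree $(a,b)$. Their $(a,b)$-parts therefore give exactly $p^{h,n+1}-p^h+p^t+K_n(p)$.

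The remaining term is $\lambda(A)^{t,1}$, and this is the delicate step. By \eqref{eq:lambda-recursion} we have $\lambda(A)_{n}=-A_n+\bigl(\lambda(A)^{\mathrm t}-\lambda(A)^{\mathrm t,1}\bigr)/X_1+\bigl(A^{\mathrm h}-A^{\mathrm h,n}\bigr)/X_n$, where the last two summands involve only depth-$(n-1)$ data of $A$ and of $\lambda(A)$. So $\lambda(A)_n+A_n$ is a divided-difference expression built from the components $A_m$ with $m<n$. I would prove by induction on depth that a component $A_m$ without mixed monomials contributes nothing mixed to $\lambda(A)$. This is essentially the computation in \cref{lem:x-free}(i), where pure $X$- and pure $Y$-monomials give contributions that cancel or have no mixed part. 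With that, the $(a,b)$-part of $\lambda(A)_n$ is $-p$ plus contributions of mixed monomials of $A_m$ for $m<n$.

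This is where the ordering has to be chosen carefully. The cleanest fix is to take $n$ to be the \emph{smallest} depth with a mixed monomial, not the largest. Then all lower components are separated, the mixed part of $\lambda(A)_n$ is exactly $-p$ in bidegree $(a,b)$, and $\lambda(A)^{t,1}$ contributes $-p^{t,1}$ in bidegree $(a,b)$. With this choice the mixed $(a,b)$-part of $(\partial_1A)_{n+1}$ is exactly $\Lambda_n(p)$. The higher components $A_{n+1}$ and beyond do not enter $(\partial_1A)_{n+1}$ at all, since $\partial_1$ raises depth by one and \cref{lem:partial-one} only uses components up to depth $n$.

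The main obstacle is verifying the claim that separated lower components of $A$ produce no mixed monomials in $\lambda(A)$. The difference quotients $(F^{\mathrm t}-F^{\mathrm t,1})/X_1$ shift $X$-variables by $X_1$ but leave the $Y$-variables alone. Hence a pure $Y$-monomial cancels, and a pure $X$-monomial stays pure $X$ after division by $X_1$. The correction $(A^{\mathrm h}-A^{\mathrm h,n})/X_n$ behaves the same way. So the induction should go through cleanly, and the contradiction then follows from the injectivity of $\Lambda_n$ on $R_n^{a,b}$.
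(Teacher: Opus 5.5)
Once you drop the false start with the largest depth and take $n$ to be the smallest depth containing a mixed monomial, your argument is correct and is essentially the paper's proof, which phrases the same step as an induction on the depth $r$. The paper likewise writes $(\partial_1A)_{r+1}=\Lambda_r(A_r)+q^{t,1}$, where $q=\lambda(A)_r+A_r$ is built from $A_1,\ldots,A_{r-1}$ and therefore has no mixed monomials, and then concludes from the bidegree preservation of $\Lambda_r$ and the assumed injectivity.
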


\begin{proof}
Let $\uri(\xi_1,A)=0$.  We show by induction on $r$ that $A_r$ has no mixed monomials.  By \cref{lem:partial-one}, the component $(\partial_1A)_{r+1}$ equals $\Lambda_r(A_r)+q^{t,1}$, where $q=\lambda(A)_r+A_r$ only involves $A_1,\ldots,A_{r-1}$ by \eqref{eq:lambda-recursion}, and $q=0$ for $r=1$.  If $A_1,\ldots,A_{r-1}$ have no mixed monomials, then the recursion \eqref{eq:lambda-recursion} shows that $\lambda(A)_1,\ldots,\lambda(A)_{r-1}$ and $q$ have none either, and the substitution $X_j\mapsto X_j-X_1$ preserves this property.  Hence $\Lambda_r(A_r)$ has no mixed monomials. Since $\Lambda_r$ preserves the bidegree, every component of $A_r$ of bidegree $(a,b)$ with $a,b\geq1$ lies in the kernel of $\Lambda_r$ on $R_r^{a,b}$, and therefore vanishes.
\end{proof}

The injectivity is proved in two steps.  In the first step we show that the kernel of $\Lambda_n$ consists of polynomials which are invariant under simultaneous translation of the $X$-variables, and in the second step we treat these polynomials in difference variables.  Both steps use a lexicographic monomial order and a leading-term argument.  For an exponent vector $\gamma=(\gamma_1,\ldots,\gamma_m)$ and $1\leq j\leq m+1$, write
\[
 \iota_j(\gamma)=(\gamma_1,\ldots,\gamma_{j-1},0,\gamma_j,\ldots,\gamma_m)
\]
for the vector obtained by inserting a zero in position $j$.  Given $\alpha,\beta\in\mathbb Z_{\geq0}^n$, we write $X^\alpha Y^\beta=\prod_iX_i^{\alpha_i}Y_i^{\beta_i}$.

\begin{lem}\label{lem:gamma-injective}
Let $n\geq1$ and put $\Gamma_n(p)=-p^h+p^t+K_n(p)$ for $p\in R_n$.  Then $\Gamma_n$ is injective on the span of the monomials $X^\alpha Y^\beta$ with $\beta\neq0$.
\end{lem}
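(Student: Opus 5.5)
The plan is a leading-term argument organised by where the zero exponent is inserted. For a monomial $X^\alpha Y^\beta$ of $p\in R_n$, I will first list the output monomials of each term of $\Gamma_n(p)$.
\begin{itemize}
\item The term $-p^h$ gives $-X^{\iota_{n+1}(\alpha)}Y^{\iota_{n+1}(\beta)}$.
\item The term $p^t$ gives $X^{\iota_1(\alpha)}Y^{\iota_1(\beta)}$.
\item The term $-\varphi_i^+p$ gives $-X^{\iota_i(\alpha)}$ times the expansion of $Y$-monomials in which $Y_i^{\beta_i}$ is replaced by $(Y_i+Y_{i+1})^{\beta_i}$. Here $Y_j$ sits at position $j$ for $j<i$ and at position $j+1$ for $j>i$.
\item The term $+\varphi_{i+1}^-p$ gives $+X^{\iota_{i+1}(\alpha)}$ times the same $Y$-expansion.
\end{itemize}
Every output monomial therefore has an $X$-exponent of the form $\iota_j(\alpha)$. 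Its $Y$-exponent is either $\iota_j(\beta)$ or a splitting of one entry $\beta_i$ across two adjacent positions.

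The extreme terms of the binomial expansions cancel against neighbouring terms. For example, $p^t$ cancels against the $Y_2^{\beta_1}$ part of $\varphi_1^+p$, and $-p^h$ cancels against the $Y_n^{\beta_n}$ part of $\varphi_{n+1}^-p$. In general, the $Y_{i+1}^{\beta_i}$ part of $-\varphi_i^+p$ cancels the $Y_{i}^{\beta_{i-1}}$-type part of $\varphi_i^-$-terms, up to a telescoping. So the surviving information must come from the genuinely split monomials. This is where $\beta\neq0$ enters.

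The main step is as follows. Fix a lexicographic order with $X_1>\cdots>X_{n+1}>Y_1>\cdots>Y_{n+1}$, and let $X^\alpha Y^\beta$ be the largest monomial of $p$ among those with $\beta\ne0$ (the hypothesis says all monomials of $p$ have $\beta\ne0$). Let $i$ be the first index with $\beta_i>0$. I will look at the output monomial
\[
 X^{\iota_i(\alpha)}\,Y_1^{\beta_1}\cdots Y_{i-1}^{\beta_{i-1}}\,Y_i^{\beta_i-1}Y_{i+1}\,Y_{i+2}^{\beta_{i+1}}\cdots Y_{n+1}^{\beta_n},
\]
which is a mixed split of $\beta_i$ with exactly one unit moved. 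It arises from $-\varphi_i^+$ applied to $X^\alpha Y^\beta$ with coefficient $-\beta_i\neq0$.

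It then remains to show that no other pair (monomial of $p$, term of $\Gamma_n$) produces this output monomial. Contributions from $p^h$ and $p^t$ are excluded because their $Y$-exponents have a zero in the last or first slot, which does not match. For merge terms, the position of the zero in the $X$-exponent together with the $Y$-exponent determines the deleted index up to $i$ or $i+1$. The candidate $\varphi_{i+1}^-$ applied to some $X^{\alpha'}Y^{\beta'}$ would need $\iota_{i+1}(\alpha')=\iota_i(\alpha)$, which forces $\alpha'$ to differ from $\alpha$ by moving $X$-degree one slot to the left. This gives $X^{\alpha'}>X^\alpha$ in the lex order, contradicting maximality unless $\alpha_i=0$. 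In the case $\alpha_i=0$, the $\varphi_{i+1}^-$ term would need $\beta'$ with $\beta'_i=\beta_i$ and the same split; here I expect the contributions to appear with coefficients $+\beta_i$ and $-\beta_i$. The remedy I would try first is to use instead the first index with $\alpha$ or $\beta$ nonzero, or to choose the split moving one unit to the right end of a maximal run.

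This case analysis, and in particular choosing the distinguished output monomial so that the $\varphi^+$ and $\varphi^-$ contributions from the same or from larger monomials cannot cancel it, is the main obstacle. Once a monomial with a uniquely determined nonzero coefficient is found, $\Gamma_n(p)\neq0$ for every nonzero $p$ in the span, which proves injectivity.
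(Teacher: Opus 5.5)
Your proposal has a genuine gap at its central step: the output monomial you single out does not in general have a nonzero coefficient.

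The index-$i$ part of $K_n(X^\alpha Y^\beta)$ is $(X^{\iota_{i+1}(\alpha)}-X^{\iota_i(\alpha)})$ times a common $Y$-expansion. Whenever $\alpha_i=0$ we have $\iota_i(\alpha)=\iota_{i+1}(\alpha)$, so this part vanishes identically. In particular, $K_n$ kills every pure $Y$-monomial. For example, $\Gamma_1(Y_1^2)=Y_2^2-Y_1^2$ contains no split monomial at all, so your target $Y_1Y_2$ has coefficient zero. (Also, $\iota_{i+1}(\alpha')=\iota_i(\alpha)$ forces $\alpha'=\alpha$ and $\alpha_i=0$; it does not shift $X$-degree to the left.)

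The choice also fails when $\alpha_i\neq0$ but $\beta_i=1$. Then your monomial is the unsplit one, and it is also produced by $p^t$. For instance, $\Gamma_1(X_1Y_1)=X_1Y_2-X_2Y_1$, where your target $X_2Y_2$ gets $+1$ from $p^t$ and $-1$ from $-\varphi_1^+$. So your exclusion of $p^t$ and of the other merge indices is not valid. The remedies you suggest are not carried out, so the key step is missing.

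The missing idea runs against your diagnosis that the unsplit terms cancel: they telescope only partially, and what survives is the leading term. The paper orders monomials lexicographically with $Y_{n+1}\succ\cdots\succ Y_1\succ X_{n+1}\succ\cdots\succ X_1$. Let $t$ be the first index with $\beta_t>0$.

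\begin{itemize}
\item With this order, the largest $Y$-exponent in $\Gamma_n(X^\alpha Y^\beta)$ is $\iota_1(\beta)$.
\item It comes from $p^t$, and from the parts of the index-$i$ terms in which all of $\beta_i$ goes into position $i+1$, but only for $i\leq t$. Indeed $\iota_i(\beta)=\iota_1(\beta)$ exactly for those $i$, while $p^h$ gives $\iota_{n+1}(\beta)\prec\iota_1(\beta)$.
\item The $X$-coefficient of $Y^{\iota_1(\beta)}$ is therefore
$X^{\iota_1(\alpha)}+\sum_{i=1}^{t}\bigl(X^{\iota_{i+1}(\alpha)}-X^{\iota_i(\alpha)}\bigr)=X^{\iota_{t+1}(\alpha)}$.
\item So the leading monomial is $X^{\iota_{t+1}(\alpha)}Y^{\iota_1(\beta)}$ with coefficient one. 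It determines $\beta$, then $t$, then $\alpha$.
\end{itemize}

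Since distinct input monomials have distinct leading monomials, injectivity follows by triangularity. This is also where $\beta\neq0$ really enters. For $\beta=0$ the telescoping runs over all $i$, and one gets $\Gamma_n(X^\alpha)=0$.
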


\begin{proof}
Order the monomials of $R_{n+1}$ lexicographically with respect to
\[
 Y_{n+1}\succ Y_n\succ\cdots\succ Y_1\succ X_{n+1}\succ\cdots\succ X_1.
\]
Let $\beta\neq0$ and let $t$ be the smallest index with $\beta_t>0$.  We claim that the leading monomial of $\Gamma_n(X^\alpha Y^\beta)$ is
\begin{equation}\label{eq:gamma-leading}
 X^{\iota_{t+1}(\alpha)}Y^{\iota_1(\beta)},
\end{equation}
with coefficient one.  The $Y$-exponents occurring in $\Gamma_n(X^\alpha Y^\beta)$ are $(\beta,0)$ from $p^h$, the vector $\iota_1(\beta)$ from $p^t$, and, from the two terms with index $i$ in $K_n$, the vectors obtained from $\iota_i(\beta)$ by moving part of the entry $\beta_i$ from position $i+1$ to position $i$.  For fixed $i$ the largest of the latter is $\iota_i(\beta)$.  Now let $i>t$, or $i=n+1$ with $\iota_{n+1}(\beta)=(\beta,0)$, and let $k<i$ be the largest index with $\beta_k>0$.  Reading the coordinates from position $n+1$ downwards, $\iota_i(\beta)$ agrees with $\iota_1(\beta)$ down to position $k+2$ and has the entry $0$ in position $k+1$, where $\iota_1(\beta)$ has the entry $\beta_k>0$.  Hence $\iota_1(\beta)$ is the largest $Y$-exponent, and it occurs in $p^t$ and in the terms with index $i\leq t$, where it is obtained by putting the whole power $\beta_i$ into position $i+1$, with coefficient one.  The corresponding $X$-exponents are $\iota_1(\alpha)$ for $p^t$, $\iota_i(\alpha)$ for $\varphi_{i+1}^-$ and $\iota_{i+1}(\alpha)$ for $\varphi_i^+$.  Therefore the coefficient of $Y^{\iota_1(\beta)}$ in $\Gamma_n(X^\alpha Y^\beta)$ is
\[
 X^{\iota_1(\alpha)}
 +\sum_{i=1}^{t}\bigl(X^{\iota_{i+1}(\alpha)}-X^{\iota_i(\alpha)}\bigr)
 =X^{\iota_{t+1}(\alpha)},
\]
which proves the claim.  The monomial \eqref{eq:gamma-leading} determines $\beta$ by deleting the first coordinate of its $Y$-exponent, hence $t$, and then $\alpha$ by deleting the coordinate $t+1$ of its $X$-exponent.  Thus distinct monomials have distinct leading monomials, and the lemma follows.
\end{proof}

We call $p\in R_n$ translation invariant if $p(X_1+u,\ldots,X_n+u\mid Y)=p(X\mid Y)$ for an indeterminate $u$. Substituting $u=-X_n$ shows that such a $p$ is a polynomial in the $Y$-variables and in $U_j=X_j-X_{j+1}$, $1\leq j\leq n-1$, and this expression is unique since these are algebraically independent.  For $n=1$, a translation invariant $p$ does not involve $X_1$.

\begin{lem}\label{lem:K-injective}
Let $n\geq2$.  Then $K_n$ is injective on the span of the translation invariant polynomials $U^\alpha Y^\beta$ with $\alpha\in\mathbb Z_{\geq0}^{n-1}$, $\alpha\neq0$, and $\beta\in\mathbb Z_{\geq0}^n$.
\end{lem}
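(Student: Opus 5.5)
The plan is to imitate the leading-term argument of \cref{lem:gamma-injective}, now in the variables $U_j=X_j-X_{j+1}$. First I will rewrite $K_n$ on translation invariant polynomials. Let $p=f(U_1,\ldots,U_{n-1}\mid Y_1,\ldots,Y_n)$, and write $U'_j=X_j-X_{j+1}$, $1\leq j\leq n$, for the difference variables in depth $n+1$. Deleting $X_i$ from $(X_1,\ldots,X_{n+1})$ replaces the two differences $U'_{i-1},U'_i$ by their sum $U'_{i-1}+U'_i$. For $i=1$ it simply drops $U'_1$. Deleting $X_{i+1}$ merges $U'_i,U'_{i+1}$ into $U'_i+U'_{i+1}$, and for $i=n$ it drops $U'_n$. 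Both terms with index $i$ carry the same $Y$-substitution $c_{i,i+1}$, which merges $Y_i+Y_{i+1}$. Hence
\[
 K_n(p)=\sum_{i=1}^n\Bigl(f\bigl(\mu_{i,i+1}(U')\mid c_{i,i+1}(Y)\bigr)-f\bigl(\mu_{i-1,i}(U')\mid c_{i,i+1}(Y)\bigr)\Bigr),
\]
where $\mu$ denotes the indicated merge or drop. In particular $K_n$ preserves translation invariance, and it maps monomials $U^\alpha Y^\beta$ to polynomials of the same $U$-degree and $Y$-degree.

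Next I will fix a lexicographic order on the monomials in $U'_1,\ldots,U'_n,Y_1,\ldots,Y_{n+1}$. My first choice is to let the $U'$-variables dominate, with $U'_1\succ\cdots\succ U'_n\succ Y_{n+1}\succ\cdots\succ Y_1$. Take $\alpha\neq0$ and let $t$ be the smallest index with $\alpha_t>0$. Expanding $(U'_a+U'_{a+1})^{\alpha}$ into monomials, each term's leading $U'$-exponent is obtained by putting the whole power on the earlier variable. This produces exponent vectors of the form $\iota_j(\alpha)$, or $\iota_j(\alpha)$ with one entry shifted, exactly as in the proof of \cref{lem:gamma-injective}. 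I then expect the following structure. The terms with $i$ below some threshold determined by $t$ telescope in pairs. The term $f(\mu_{t,t+1}(U')\mid\cdots)$, with the whole power $\alpha_t$ on $U'_t$ and the remaining entries shifted by one, survives uncancelled. I will try to show that it yields the leading monomial, of the form $U'^{\iota_{t+1}(\alpha)}$ or $U'^{\iota_t(\alpha)}$, times the leading $Y$-monomial of $Y^\beta$ under $c_{t,t+1}$, namely $Y^{\iota_{t+1}(\beta)}$ with the whole power $\beta_t$ put on $Y_{t+1}$. This leading monomial should appear with coefficient $\pm1$.

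The step I expect to be the main obstacle is the cancellation bookkeeping. The same leading $U'$-exponent can arise from several indices $i$, with opposite signs. If the $U'$-part alone cancels, the order must break the tie through the $Y$-exponent, which is different for each $i$ because of $c_{i,i+1}$. So it may be necessary to let the $Y$-variables dominate instead, as in \cref{lem:gamma-injective}. In that case the telescoping over $i\leq t$ would use the smallest index $t$ with $\beta_t>0$, and $\alpha\neq0$ would be needed exactly when $\beta=0$ or when the $Y$-contributions coincide. I will work out both orders on small $n$ and keep whichever gives an uncancelled leading term. The hypothesis $\alpha\neq0$ should enter in one specific place. If $\alpha=0$, the two summands with the same $i$ coincide and $K_n$ vanishes identically. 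With $\alpha\neq0$, the drop at $i=1$ or $i=n$ changes the $U'$-part, and this breaks the telescoping.

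Once a formula for the leading monomial is established, I will show that $(\alpha,\beta)$ can be recovered from it by deleting the inserted coordinates, as in the last step of \cref{lem:gamma-injective}. Then distinct monomials have distinct leading monomials, and injectivity of $K_n$ on the stated span follows.
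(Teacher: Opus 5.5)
Your setup is the paper's. The rewriting of $K_n$ in the difference variables matches it, with the $\varphi_{i+1}^-$- and $\varphi_i^+$-terms sharing the $Y$-substitution $c_{i,i+1}$, and so does your first-choice order $U'_1\succ\cdots\succ U'_n\succ Y_{n+1}\succ\cdots\succ Y_1$.

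The gap is in the step that carries the whole lemma. You only announce a leading monomial, and the one you predict is wrong. In this order:
\begin{itemize}
\item the largest $U'$-exponent of the $\varphi_{i+1}^-$-term is $\iota_{i+1}(\alpha)$, or $(\alpha,0)$ for $i=n$;
\item the largest $U'$-exponent of the $\varphi_i^+$-term is $\iota_i(\alpha)$;
\item these equal $(\alpha,0)$ exactly when $i\geq s$, respectively $i>s$, where $s$ is the \emph{largest} index with $\alpha_s>0$, and otherwise they are strictly smaller.
\end{itemize}
So the smallest index $t$ is the wrong one as soon as $\alpha$ has two nonzero entries. For example, with $n=3$ and $\beta=0$,
\[
 K_3(U_1U_2)=U'_1U'_2-U'_2U'_3,
\]
and the monomial $U'_1U'_3=(U')^{\iota_2(\alpha)}$ singled out by your rule has coefficient zero.

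Nor is the mechanism a telescoping over small $i$. For each $i>s$, the two terms with index $i$ contribute to the coefficient of $(U')^{(\alpha,0)}$ the same polynomial, $Y^\beta$ evaluated at $c_{i,i+1}(Y)$, with opposite signs, so they cancel. The only survivor is the $\varphi_{s+1}^-$-term, and its leading monomial is $(U')^{(\alpha,0)}Y^{\iota_s(\beta)}$ with coefficient one. The hypothesis $\alpha\neq0$ is used exactly to have $\alpha_s>0$. Then the $\varphi_s^+$-term has the strictly smaller exponent $\iota_s(\alpha)$ and cannot cancel the survivor. Deleting the last $U'$-coordinate recovers $\alpha$, hence $s$. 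Deleting coordinate $s$ of the $Y$-exponent then recovers $\beta$.

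Your fallback of letting the $Y$-variables dominate, in the order of \cref{lem:gamma-injective}, does not rescue the plan. The $\varphi_{i+1}^-$-term (index $i$) and the $\varphi_{i+1}^+$-term (index $i+1$) both delete $X_{i+1}$, so they have the same $U'$-substitution. Let $t'$ be the smallest index with $\beta_{t'}>0$. The coefficient of the largest possible $Y$-monomial $Y^{(0,\beta)}$ then telescopes to $U^\alpha$ under the substitution of $\varphi_{t'+1}^-$ minus $U^\alpha$ under that of $\varphi_1^+$. This difference vanishes whenever $t'\leq n-1$ and $\alpha_1=\cdots=\alpha_{t'}=0$. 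For instance, $K_3(U_2Y_1)=(U'_2-U'_3)Y_1$ contains no $Y_2$ at all. So the $U'$-dominant order with the largest index $s$ is the one that works, and the proposal does not yet contain that computation.
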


\begin{proof}
Write $U_j=X_j-X_{j+1}$ for $1\leq j\leq n$ in $R_{n+1}$.  For $q=q(U_1,\ldots,U_{n-1}\mid Y_1,\ldots,Y_n)$, the substitutions in $K_n$ become
\[
 K_n(q)=\sum_{i=1}^n\Bigl(
 q\bigl(P_i(U)\mid M_i(Y)\bigr)-q\bigl(Q_i(U)\mid M_i(Y)\bigr)\Bigr),
\]
where $M_i(Y)=(Y_1,\ldots,Y_{i-1},Y_i+Y_{i+1},Y_{i+2},\ldots,Y_{n+1})$ and
\begin{align*}
 P_i(U)&=(U_1,\ldots,U_{i-1},U_i+U_{i+1},U_{i+2},\ldots,U_n)
 \quad(i<n),&
 P_n(U)&=(U_1,\ldots,U_{n-1}),\\
 Q_i(U)&=(U_1,\ldots,U_{i-2},U_{i-1}+U_i,U_{i+1},\ldots,U_n)
 \quad(i>1),&
 Q_1(U)&=(U_2,\ldots,U_n).
\end{align*}
Namely, $\varphi_{i+1}^-$ deletes $X_{i+1}$, so the differences of consecutive remaining $X$-variables are $P_i(U)$, and $\varphi_i^+$ deletes $X_i$, which gives $Q_i(U)$.  Since $U_1,\ldots,U_n$ are linearly independent linear forms in $X_1,\ldots,X_{n+1}$, it suffices to prove injectivity of this expression.

Order the monomials in $U_1,\ldots,U_n,Y_1,\ldots,Y_{n+1}$ lexicographically with respect to
\[
 U_1\succ\cdots\succ U_n\succ Y_{n+1}\succ\cdots\succ Y_1.
\]
Let $\alpha\neq0$ and let $s$ be the largest index with $\alpha_s>0$.  The largest $U$-exponent in the term with $P_i$ is $\iota_{i+1}(\alpha)$, obtained by putting the whole power $\alpha_i$ into $U_i$, and in the term with $Q_i$ it is $\iota_i(\alpha)$.  These vectors equal $(\alpha,0)=\iota_n(\alpha)$ exactly when $i\geq s$, respectively $i>s$, and are strictly smaller otherwise, since at the first position $k>i$ with $\alpha_k>0$ they have the entry $0$.  For $i>s$ the two contributions to the $U$-exponent $(\alpha,0)$ carry the same $Y$-polynomial $Y^\beta$ evaluated at $M_i(Y)$ with opposite signs, so they cancel.  The only remaining contribution comes from $P_s$, and its leading $Y$-monomial is obtained by putting the whole power $\beta_s$ into $Y_{s+1}$.  Hence the leading monomial of $K_n(U^\alpha Y^\beta)$ is $U^{(\alpha,0)}Y^{\iota_s(\beta)}$ with coefficient one.  It determines $\alpha$, hence $s$, and then $\beta$ by deleting the coordinate $s$ of its $Y$-exponent.  The lemma follows.
\end{proof}

\begin{proof}[Proof of \cref{thm:linear-kernel}]
By \cref{lem:Lambda-reduction} it suffices to show that $\Lambda_n(p)=0$ implies $p=0$ for $p\in R_n^{a,b}$ with $a,b\geq1$.  Let $u$ be an indeterminate and write
\[
 p(X_1+u,\ldots,X_n+u\mid Y)=\sum_{m=0}^{a}u^m\,p_m(X\mid Y),
 \qquad p_0=p,
\]
where each $p_m$ is a polynomial all of whose monomials have $Y$-degree $b$.  Substitute $X_j+u$ for $X_j$ in $\Lambda_n(p)$ for $1\leq j\leq n+1$.  The terms $p^{h,n+1}$ and $p^{t,1}$ only involve differences of $X$-variables and are unchanged.  On the other hand, the terms $p^h$, $p^t$ and $K_n(p)$ only delete $X$-variables or keep them, so they become $\sum_mu^m\Gamma_n(p_m)$.  Comparing the coefficients of $u^m$ for $m\geq1$ in $\Lambda_n(p)=0$ gives $\Gamma_n(p_m)=0$, and \cref{lem:gamma-injective} gives $p_m=0$.  Hence $p$ is translation invariant.  For $n=1$ this contradicts $a\geq1$.  When $n\geq2$, substituting $u=-X_{n+1}$, respectively $u=-X_1$, in the identity $p(X+u\mid Y)=p(X\mid Y)$ gives $p^{h,n+1}=p^h$ and $p^{t,1}=p^t$, so $\Lambda_n(p)=K_n(p)=0$.  Since $p$ is homogeneous of $X$-degree $a$ and the $U_j$ are linear forms, $p$ is a combination of monomials $U^\alpha Y^\beta$ with $|\alpha|=a\geq1$.  \cref{lem:K-injective} gives $p=0$.
\end{proof}

\begin{proof}[Proof of \cref{thm:E}]
By \cref{prop:dm-to-m}, the image of $\iota$ is contained in $\mathfrak d\cap\mathfrak m$ and consists of the elements of $\mathfrak B$ without mixed $X,Y$-monomials.  Since $\mathfrak d\subseteq\mathfrak B\subseteq\BARI^{\mathrm{pol}}$, \cref{thm:linear-kernel} shows that every element of $\mathfrak d$ has no mixed monomials, so it lies in the image of $\iota$.  Hence $\iota$ is a linear isomorphism onto $\mathfrak d$, and $\mathfrak d\subseteq\mathfrak m$.

It remains to show that $\iota$ respects the brackets.  Since $\xi_1$ is central in $\mathfrak d$, it suffices to consider homogeneous $\psi,\phi\in\mathfrak{dm}_0$ of weights $k$ and $l$.  By \cref{thm:D}(ii), the bracket $\uri(\iota(\psi),\iota(\phi))$ lies in $\mathfrak d$, so it is a separated element of $\mathfrak B_{k+l}$, as is $\iota(\{\psi,\phi\}_{\mathrm{Ih}})$.  Equations \eqref{eq:iota-ma} and \cref{lem:x-free} give
\[
 \uri(\iota(\psi),\iota(\phi))|_{X=0}
 =(-1)^{k+l}\ari(A_\psi+C_\psi,A_\phi+C_\phi)
 =(-1)^{k+l}\ari(A_\psi,A_\phi).
\]
By \cite{Bu1}*{Theorem~B.68}, the map $\operatorname{ma}$ is a Lie algebra isomorphism from $(\mathfrak{dm}_0,\{\ ,\ \}_{\mathrm{Ih}})$ onto $\ARI^{\mathrm{pol}}_{\underline{\mathrm{al}}*\underline{\mathrm{il}}}$ with the ari bracket of \cite{Bu1}, which is the negative of ours (see the footnote in \cite{BKa}*{Section~8}).  Hence $\ari(A_\psi,A_\phi)=-A_{\{\psi,\phi\}_{\mathrm{Ih}}}$, and the right-hand side above equals $(-1)^{k+l-1}A_{\{\psi,\phi\}_{\mathrm{Ih}}}$, which by \eqref{eq:iota-ma} is the part of $\iota(\{\psi,\phi\}_{\mathrm{Ih}})|_{X=0}$ without constant terms. Since a separated element of $\mathfrak B$ of weight $k+l\geq6$ is determined by this part, as noted after \cref{prop:dm-to-m}, we obtain $\uri(\iota(\psi),\iota(\phi))=\iota(\{\psi,\phi\}_{\mathrm{Ih}})$.
\end{proof}

\section{The higher odd kernels and generators}

The derivations $\partial_{2r+1}$ cannot be injective on $\mathfrak B$, since they annihilate their defining elements $\checkdelta^r(\xi_1)$. We now show that these are the only elements in their kernels when $r\geq1$.  The proof only uses the lowest nonzero depth components. For a bimould $A\neq0$, let $d(A)$ be the smallest depth with $A_{d(A)}\neq0$.

\begin{lem}\label{lem:lowest-component}
\begin{enumerate}
\item[(i)] If $A\in\BARI^{\mathrm{pol}}$ is nonzero and alternil, then
$A_{d(A)}$ is alternal.
\item[(ii)] For nonzero $A,B\in\BARI^{\mathrm{pol}}$ with $d=d(A)$ and
$e=d(B)$, the components of $\uri(A,B)$ in depths below $d+e$ vanish, and
\[
 \uri(A,B)_{d+e}=\ari(A_d,B_e)_{d+e},
\]
where $A_d$ and $B_e$ are regarded as bimoulds concentrated in depths $d$ and $e$.
\item[(iii)] For $r\geq1$, let $f_r$ be the bimould concentrated in depth
one with $(f_r)_1=(X_1Y_1)^r$, and let $p$ be a polynomial regarded as a bimould concentrated in depth $d\geq1$.  With $S=Y_1+\cdots+Y_{d+1}$ and the notation of \cref{lem:partial-one} with $r$ replaced by $d+1$,
\begin{equation}\label{eq:ari-f-r}
\begin{aligned}
 \ari(f_r,p)_{d+1}
 ={}&X_{d+1}^rS^r\,p^{h,d+1}
   -X_1^rS^r\,p^{t,1}
   +(X_1Y_1)^r\,p^t-(X_{d+1}Y_{d+1})^r\,p^h\\
 &-\sum_{i=1}^{d}\Bigl(
  Y_i^r(X_i-X_{i+1})^r\,\varphi_i^+p
  -Y_{i+1}^r(X_{i+1}-X_i)^r\,\varphi_{i+1}^-p\Bigr)_{d+1}.
\end{aligned}
\end{equation}
\end{enumerate}
\end{lem}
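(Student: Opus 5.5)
For (i), I would pass through the coefficient map.  Write $d=d(A)$.  Since $A$ vanishes in depths below $d$, the coefficient map of $A$ is zero on all words of length less than $d$.  Take nonempty words $u,v$ in the letters $y_{k,m}$ whose lengths add up to $d$.  The product $u*v$ is the shuffle $u\shuffle v$ plus terms coming from at least one merger, and each merged term has length smaller than $d$.  Alternility therefore gives $a_A(u\shuffle v)=0$.  In depth $d$, the shuffle relations for the coefficients of \eqref{eq:coefficient-expansion} amount to the variable-level alternality $A_d(u\shuffle v)=0$.  The reason is that permuting the pairs $w_i=\bi{X_i}{Y_i}$ permutes the monomials $X_i^{k_i-1}Y_i^{m_i}/m_i!$ together with their letters.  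So $A_d$ is alternal.

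For (ii), I would use $\uri=\preuri(A,B)-\preuri(B,A)$ from \cref{prop:preuri-comparison}, with $\preuri(A,B)=\arit(B)(A)+\anit(C(B))(A)+\muop(A,B)$.  In each of these terms the output depth is the sum of the two input depths.  For gaxit-type actions this holds blockwise, since the $A$-factor and the $B$-factor in a term split the letters.  The bimould $C(B)$ has $C(B)_m$ built from $B_{m-r}$ with $r\geq1$, so it is supported in depths at least $e+1$.  Hence the $\anit(C(B))(A)$ term only reaches depths at least $d+e+1$.  The same depth count applies to $\preuri(B,A)$, where the correction $C(A)$ lives in depths at least $d+1$.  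Consequently no term reaches a depth below $d+e$.  In depth $d+e$, only $A_d$, $B_e$ and the parts $\arit+\muop$ survive, and these give exactly $\ari(A_d,B_e)_{d+e}$.

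For (iii), I would expand $\ari(f_r,p)=\preari(f_r,p)-\preari(p,f_r)$ directly, in the same way as in the proof of \cref{lem:partial-one}.
\begin{itemize}
\item The concatenation terms give $\muop(f_r,p)-\muop(p,f_r)=(X_1Y_1)^rp^t-(X_{d+1}Y_{d+1})^rp^h$.
\item In $\arit(p)(f_r)$, the whole word forms one block for $p$.  With amit the marked letter is the last one, giving $f_r$ evaluated at $\bi{X_{d+1}}{S}$ times $p^{h,d+1}$.  With anit (sign minus) the marked letter is the first one, giving $X_1^rS^rp^{t,1}$.
\item In $\arit(f_r)(p)$, the depth-one factor $f_r$ occupies a block of length two at positions $i,i+1$, and it is evaluated at the difference of the $X$-variables.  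With amit the marked letter is $i+1$, which yields $\varphi_i^+p$ times $(X_i-X_{i+1})^rY_i^r$.  With anit the marked letter is $i$, which yields $\varphi_{i+1}^-p$ times $(X_{i+1}-X_i)^rY_{i+1}^r$.
\end{itemize}
The term $\arit(f_r)(p)$ enters with a minus sign, and collecting everything gives \eqref{eq:ari-f-r}.

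The main obstacle is keeping track of which variable carries the merged $Y$-sum and of the signs in amit versus anit.  I would check these against \cref{lem:partial-one} by setting $r=0$, where $f_0=\xi_1$.
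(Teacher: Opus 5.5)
Your proof is correct. For (ii) and (iii) it is essentially the paper's argument: the depth count in $\preuri$, using that $C(B)$ is supported in depths at least $e+1$, and the block-by-block expansion of $\arit(p)(f_r)$, $\arit(f_r)(p)$ and the two concatenation terms.

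For (i) you take a genuinely different route. The paper transports through $\ganit_{\poc}$. In the marked-word expansion of $\ganit_C(A)$, the depth-$d$ component is $A_d$ plus terms built from lower-depth components of $A$. Hence $\ganit_{\poc}$ does not change the lowest nonzero component. Since $\ganit_{\poc}(A)$ is alternal by the alternal--alternil transport, $A_d$ is alternal.

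You instead work directly with the coefficient map. You use that $u*v$ is $u\shuffle v$ plus terms of strictly smaller length, and that $a_A$ vanishes on these shorter terms when $\ell(u)+\ell(v)=d$.

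Your argument is self-contained and elementary. It needs neither the theorem that $T^{-1}=\ganit_{\poc}$ sends alternil to alternal bimoulds nor the observation about lowest components. The paper's version is a one-line consequence of that transport and matches how alternility is handled elsewhere in the paper. Since the paper defines alternality of polynomial bimoulds at the coefficient level, your last step, identifying the coefficient shuffle relations with $A_d(u\shuffle v)=0$, is correct but not actually needed.
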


\begin{proof}
In the marked-word expansion of $\ganit_C(A)$ for a normalized $C$, the component of depth $d$ is $A_d$ plus terms involving components of $A$ of smaller depth.  Hence $\ganit_{\poc}$ does not change the lowest nonzero component.  If $A$ is alternil, then $\ganit_{\poc}(A)$ is alternal, and alternality in depth $d$ only involves the component of depth $d$.  This proves (i).  For (ii), we use \eqref{eq:uri-preuri} and \eqref{eq:preuri-definition}.  By \eqref{eq:C-definition}, the component $C(B)_m$ only involves $B_{m-1},B_{m-2},\ldots$, so $C(B)$ vanishes in depths at most $e$.  Since $\muop$ and $\arit$ are additive in the depth, the term $\anit(C(B))(A)$ vanishes in depths at most $d+e$, and the lowest component of $\preuri(A,B)$ is $\preari(A_d,B_e)_{d+e}$.  This proves (ii).  For (iii), the word of $\arit(p)(f_r)$ in depth $d+1$ is one block.  Its marked letter is the last one for amit and the first one for anit, which gives the first two terms of \eqref{eq:ari-f-r}.  The terms of $\arit(f_r)(p)$ have a block of length two, with marked second letter for amit and marked first letter for anit, which gives the sum. Finally, the remaining two terms are $\muop(f_r,p)$ and $-\muop(p,f_r)$.
\end{proof}

\begin{lem}\label{lem:sliding-window}
Let $d\geq2$, $r\geq1$ and $P\in\QQ[Z_1,\ldots,Z_d,Y_1,\ldots,Y_d]$.  If
\begin{equation}\label{eq:sliding-window}
 Y_1^r\,P(Z_2,\ldots,Z_{d+1}\mid Y_2,\ldots,Y_{d+1})
 =Y_{d+1}^r\,P(Z_1,\ldots,Z_d\mid Y_1,\ldots,Y_d),
\end{equation}
then $P=c\,(Y_1\cdots Y_d)^r$ for some $c\in\QQ$.
\end{lem}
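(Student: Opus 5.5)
The plan is a divisibility argument that peels off the factor $Y^r$ at every slot, followed by a shift-invariance argument for what remains. The ring $\QQ[Z_1,\ldots,Z_{d+1},Y_1,\ldots,Y_{d+1}]$ is a UFD. The variable $Y_1$ divides the left-hand side of \eqref{eq:sliding-window}. Since $Y_1$ and $Y_{d+1}$ are coprime, $Y_1^r$ divides $P(Z_1,\ldots,Z_d\mid Y_1,\ldots,Y_d)$, so we may write $P=Y_1^rP'$. Substituting back gives
\[
 Y_1^rY_2^r\,P'(Z_2,\ldots\mid Y_2,\ldots)=Y_1^rY_{d+1}^r\,P'(Z_1,\ldots\mid Y_1,\ldots).
\]
Cancelling $Y_1^r$ then shows that $Y_2^r$ divides $P'(Z_1,\ldots,Z_d\mid Y_1,\ldots,Y_d)$, using $d\geq2$ so that $Y_2\neq Y_{d+1}$.

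Iterating this, I expect $P=(Y_1\cdots Y_d)^r\,Q$ for a polynomial $Q$. More precisely, an induction on $j\leq d$ should show that $(Y_1\cdots Y_j)^r$ divides $P$. At step $j$, write $P=(Y_1\cdots Y_{j-1})^rP_{j-1}$ and plug into \eqref{eq:sliding-window}. The left-hand side becomes $Y_1^r(Y_2\cdots Y_j)^r P_{j-1}(\text{shifted})$ and the right-hand side becomes $Y_{d+1}^r(Y_1\cdots Y_{j-1})^rP_{j-1}$. After cancelling $(Y_1\cdots Y_{j-1})^r$, the factor $Y_j^r$ divides $Y_{d+1}^rP_{j-1}$. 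Since $j\leq d$, this gives $Y_j^r\mid P_{j-1}$.

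With $P=(Y_1\cdots Y_d)^rQ$, both sides of \eqref{eq:sliding-window} carry the common factor $(Y_1\cdots Y_{d+1})^r$. After cancelling it, $Q$ satisfies the shift identity
\[
 Q(Z_2,\ldots,Z_{d+1}\mid Y_2,\ldots,Y_{d+1})=Q(Z_1,\ldots,Z_d\mid Y_1,\ldots,Y_d).
\]
The left-hand side does not involve $Z_1$ or $Y_1$. Hence $Q$ does not depend on its first $Z$- and $Y$-arguments. The right-hand side does not involve $Z_{d+1}$ or $Y_{d+1}$, so $Q(Z_2,\ldots,Z_{d+1}\mid\ldots)$ does not depend on $Z_{d+1},Y_{d+1}$; in other words, $Q$ is independent of its last arguments as well. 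Iterating these two observations along the chain of indices shows that $Q$ is independent of every argument. A cleaner way to see this is to compare monomials: a monomial $Z^\alpha Y^\beta$ of $Q$ becomes $\prod Z_{i+1}^{\alpha_i}Y_{i+1}^{\beta_i}$ on the left. Matching it against the right-hand side forces $\alpha_1=\beta_1=0$, and then the shifted exponent vector must equal the unshifted one, which forces all exponents to vanish. Hence $Q=c$ is a constant, and $P=c\,(Y_1\cdots Y_d)^r$.

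The only delicate point is the divisibility induction, where one has to check that the peeled variable $Y_j$ never coincides with $Y_{d+1}$. This is exactly where $j\leq d$ is needed, and the hypothesis $d\geq2$ is already needed at the second step of the peeling. The remaining steps are routine UFD cancellations together with the monomial comparison for the shift invariance.
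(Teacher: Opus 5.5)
Your proof is correct, but it is organized differently from the paper's. The paper works in one pass on exponent vectors. A monomial $Z^\alpha Y^\beta$ of $P$ gives $\bigl((0,\alpha),(r,\beta)\bigr)$ on the left and $\bigl((\alpha,0),(\beta,r)\bigr)$ on the right, and both assignments are injective. So $c_{\alpha,\beta}\neq0$ forces $\alpha_d=0$, $\beta_d=r$ and $c_{\alpha,\beta}=c_{\alpha',\beta'}$ with $\alpha'=(0,\alpha_1,\ldots,\alpha_{d-1})$ and $\beta'=(r,\beta_1,\ldots,\beta_{d-1})$. Iterating $d$ times gives $\alpha=0$ and $\beta=(r,\ldots,r)$.

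You instead run two separate inductions. The UFD peeling yields only the inequalities $\beta_j\geq r$, i.e.\ $(Y_1\cdots Y_d)^r\mid P$. The shift-invariance step for the cofactor $Q$ then supplies the equalities. The paper's route is shorter and needs no divisibility language. Yours isolates the clean principle that a polynomial with $Q(Z_2,\ldots,Z_{d+1}\mid Y_2,\ldots,Y_{d+1})=Q(Z_1,\ldots,Z_d\mid Y_1,\ldots,Y_d)$ is constant.

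For that last step, rely on your iterated variable-independence argument. The monomial sketch as phrased (``the shifted exponent vector must equal the unshifted one'') is only justified after such an iteration. A priori, the shift of one monomial matches the unshifted exponent of a possibly different monomial.

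Finally, the hypothesis $d\geq2$ is not really used. For every $j\leq d$ one has $Y_j\neq Y_{d+1}$ automatically, and the lemma holds for $d=1$ by either argument.
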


\begin{proof}
Write $P=\sum c_{\alpha,\beta}Z^\alpha Y^\beta$.  The monomial $Z^\alpha Y^\beta$ contributes to the left-hand side of \eqref{eq:sliding-window} the monomial with exponents $\bigl((0,\alpha),(r,\beta)\bigr)$ and to the right-hand side the monomial with exponents $\bigl((\alpha,0),(\beta,r)\bigr)$.  Both assignments are injective.  If $c_{\alpha,\beta}\neq0$, the monomial on the left must also occur on the right, so $\alpha_d=0$, $\beta_d=r$, and $c_{\alpha,\beta}=c_{\alpha',\beta'}$ with $\alpha'=(0,\alpha_1,\ldots,\alpha_{d-1})$ and $\beta'=(r,\beta_1,\ldots,\beta_{d-1})$.  Repeating the argument $d$ times gives $\alpha=0$ and $\beta=(r,\ldots,r)$.  Conversely, $(Y_1\cdots Y_d)^r$ satisfies \eqref{eq:sliding-window}.
\end{proof}

\begin{prop}\label{prop:odd-higher-depth}
Let $d\geq2$, $r\geq1$, and let $p$ be an alternal polynomial regarded as a bimould concentrated in depth $d$.  If $\ari(f_r,p)_{d+1}=0$, then $p=0$.
\end{prop}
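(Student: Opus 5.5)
The plan is to reduce the vanishing of \eqref{eq:ari-f-r} to the sliding-window identity \eqref{eq:sliding-window}. Then \cref{lem:sliding-window} forces $p$ to be $c\,(Y_1\cdots Y_d)^r$, and alternality kills such a $p$.

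First we split $p$ into its homogeneous pieces for the bigrading by total $X$-degree and total $Y$-degree. Every substitution in \eqref{eq:ari-f-r} is linear in the $X$-variables and linear in the $Y$-variables. Every prefactor has bidegree $(r,r)$. Hence \eqref{eq:ari-f-r} respects this bigrading, and we may assume $p\in R_d^{a,b}$. Alternality is also preserved by this splitting, since shuffles permute variables.

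The main step is to isolate the two terms $(X_1Y_1)^r p^t$ and $(X_{d+1}Y_{d+1})^r p^h$. They have exactly the shape of \eqref{eq:sliding-window} with $Z=X$, except for the extra factors $X_1^r$ and $X_{d+1}^r$. The remaining terms fall into two groups. The first group consists of the two $S^r$-terms, which depend on the $X$-variables only through differences. The second group is the contraction sum, each of whose terms carries a factor $(X_i-X_{i+1})^r$.

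As in the proof of \cref{thm:linear-kernel}, we substitute $X_j\mapsto X_j+u$ for all $j$. The $S^r$-terms and the prefactors $(X_i-X_{i+1})^r$ are translation invariant. In the two isolated terms, the factors $X_1^r$ and $X_{d+1}^r$ become $(X_1+u)^r$ and $(X_{d+1}+u)^r$. Write $p(X+u\mid Y)=\sum_m u^m p_m$ and take the coefficient of the top power $u^{r+a}$. Only $p_a$, the $X$-free part $p(0,\ldots,0\mid Y)$ of $p$, can contribute, multiplied by $u^r$ from these two prefactors. The contraction terms contribute to this power of $u$ only through their $u$-free prefactors times the top part of the contracted $p$, which is again $X$-free. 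We then restrict to the part of $Y_1$-degree exactly $r$ and $Y_{d+1}$-degree exactly $r$, or more simply compare leading monomials in a lexicographic order with $Y_1$ and $Y_{d+1}$ largest, as in \cref{lem:gamma-injective}. This should leave exactly
\[
 Y_1^r\,P(Y_2,\ldots,Y_{d+1})=Y_{d+1}^r\,P(Y_1,\ldots,Y_d)
\]
for the top piece $P$ of $p$. \Cref{lem:sliding-window} then gives $P=c\,(Y_1\cdots Y_d)^r$.

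To conclude, we use the alternality of $p$. The component $p_a$ is obtained from $p$ by an operation that commutes with permutations of the pairs, so it is again alternal. Alternality of the symmetric polynomial $c\,(Y_1\cdots Y_d)^r$ with $d\geq2$ gives $c\binom{d}{1}=0$ from the shuffle $w_1\shuffle w_2\cdots w_d$, hence $c=0$. This is where $d\geq2$ is used. Thus the top $u$-coefficient of $p$ vanishes, so $p$ has no $X$-free part, and we iterate downward in $m$. Alternatively, if the top-piece trick only handles $a=0$, we pass to the translation-invariant situation of \cref{lem:K-injective} and repeat the leading-term argument there with the extra factors $Y_i^r(X_i-X_{i+1})^r$.

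The main obstacle is the middle step. The contraction terms $Y_i^r(X_i-X_{i+1})^r\varphi_i^{\pm}p$ must genuinely drop out of the extracted coefficient, or else cancel pairwise as the $P_i,Q_i$ terms did in \cref{lem:K-injective}. If the $u$-extraction does not cleanly separate them, the first thing to try is a lexicographic order in which $Y_{d+1}\succ Y_1\succ\cdots$ and the $X$-variables are smallest. Then the leading monomials of the terms $(X_{d+1}Y_{d+1})^r p^h$ are compared directly with those of the contraction terms, and we check that the latter are strictly smaller unless $p$ has $Y$-exponent $(r,\ldots,r)$.
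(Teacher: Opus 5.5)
\textbf{There is a genuine gap: your downward iteration cannot be carried to $m=0$, because the two $S^r$-terms are not translation invariant.}

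Your first phase matches the paper. You substitute $X_j\mapsto X_j+u$, take the coefficient of $u^{m+r}$ by descending induction on $m\geq1$, obtain the identity of \cref{lem:sliding-window} for $p_m$, and kill $c_m(Y_1\cdots Y_d)^r$ by alternality. Two small corrections there. First, $p_a=p(1,\ldots,1\mid Y)$, not $p(0,\ldots,0\mid Y)$. Second, the contraction terms never reach $u^{m+r}$: their prefactors are $u$-free and $p_{m+r}=0$, so no extra lexicographic filtering is needed.

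This phase only shows $p_m=0$ for $m\geq1$, i.e.\ that $p$ is invariant under simultaneous translation of the $X$-variables. It does not show that $p$ has no $X$-free part, and it does not give $p=0$. The error is your claim that the two $S^r$-terms in \eqref{eq:ari-f-r} are translation invariant. Only the arguments of $p$ in them are differences. The prefactors $X_{d+1}^r$ and $X_1^r$ become $(X_{d+1}+u)^r$ and $(X_1+u)^r$. So these terms have $u$-degree $r$ and contribute exactly at the last level $m=0$. Once $p$ is translation invariant, the contraction sum is $u$-free and drops out. The coefficient of $u^r$ is then
\[
 (S^r-Y_{d+1}^r)\,p(Z_1,\ldots,Z_d\mid Y_1,\ldots,Y_d)
 =(S^r-Y_1^r)\,p(Z_2,\ldots,Z_{d+1}\mid Y_2,\ldots,Y_{d+1}).
\]
This is not a sliding-window identity, so \cref{lem:sliding-window} cannot be applied to $p_0=p$. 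Your hedge has it backwards: the top-coefficient trick fails precisely when $a=0$, and more generally at $m=0$.

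\textbf{The missing idea is how to dispose of this last identity.} The paper shows that $S^r-Y_{d+1}^r$ and $S^r-Y_1^r$ are coprime. Over an algebraic closure of $\QQ(Y_2,\ldots,Y_d)$ they split into the linear forms $Y_1+C+(1-\zeta)Y_{d+1}$ and $Y_{d+1}+C+(1-\eta)Y_1$, with $C=Y_2+\cdots+Y_d\neq0$ and $\zeta,\eta$ running over the $r$-th roots of unity. Comparing the constant $C$ and then the $Y_1$-coefficient shows that no two of these forms are proportional. Hence $S^r-Y_{d+1}^r$, which has degree $r$ in $Y_1$, divides $p(Z_2,\ldots,Z_{d+1}\mid Y_2,\ldots,Y_{d+1})$, which does not involve $Y_1$, forcing $p=0$. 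This is a second place where $d\geq2$ is used, through $C\neq0$, in addition to the alternality step you identified. Your fallback, a leading-term argument in the style of \cref{lem:K-injective} with the factors $Y_i^r(X_i-X_{i+1})^r$, is not worked out and aims at the wrong terms. In the translation-invariant case the contraction sum is irrelevant to the $u^r$-coefficient; the obstruction comes entirely from the $S^r$-terms.
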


\begin{proof}
All substitutions in \eqref{eq:ari-f-r} are linear in the $X$-variables and in the $Y$-variables, and every term shifts the $X$-degree and the $Y$-degree by $r$.  The alternality relations preserve the bidegree, so the bihomogeneous components of $p$ are alternal, and we may assume that $p$ has total $X$-degree $a$.  Let $u$ be an indeterminate and write
\[
 p(Z_1+u,\ldots,Z_d+u\mid Y)=\sum_{m=0}^au^m\,p_m(Z\mid Y).
\]
Each $p_m$ is alternal, since the simultaneous translation commutes with the permutations of the variable pairs in the alternality relations.

We claim that $p_m=0$ for $m\geq1$, by descending induction on $m$. Suppose that $p_{m+1}=\cdots=p_a=0$ for some $m\geq1$.  Substitute $Z_j+u$ for $X_j$ in \eqref{eq:ari-f-r} and take the coefficient of $u^{m+r}$.  The first two terms have degree at most $r$ in $u$, since the arguments of $p$ are differences.  In the sum, the factors $(X_i-X_{i+1})^r$ are free of $u$, so the coefficient only involves $p_{m+r}=0$.  For the two remaining terms, the coefficient of $u^{m+r}$ in $(Z_1+u)^rY_1^r\sum_ju^jp_j(Z_2,\ldots,Z_{d+1}\mid Y_2,\ldots,Y_{d+1})$ is $Y_1^rp_m(Z_2,\ldots,Z_{d+1}\mid Y_2,\ldots,Y_{d+1})$ by the induction hypothesis, and similarly for the last term.  Therefore
\[
 Y_1^rp_m(Z_2,\ldots,Z_{d+1}\mid Y_2,\ldots,Y_{d+1})
 =Y_{d+1}^rp_m(Z_1,\ldots,Z_d\mid Y_1,\ldots,Y_d),
\]
and \cref{lem:sliding-window} gives $p_m=c_m(Y_1\cdots Y_d)^r$.  This polynomial is symmetric in the variable pairs, so applying its alternality relation to a word of length one and a word of length $d-1$ gives $d\,c_m(Y_1\cdots Y_d)^r=0$.  Hence $c_m=0$, which proves the claim.

Thus $p$ is invariant under simultaneous translation of the $X$-variables.  Substitute $Z_j+u$ for $X_j$ in \eqref{eq:ari-f-r} once more and take the coefficient of $u^r$.  The sum is now free of $u$, and we obtain
\[
 \bigl(S^r-Y_{d+1}^r\bigr)\,p(Z_1,\ldots,Z_d\mid Y_1,\ldots,Y_d)
 =\bigl(S^r-Y_1^r\bigr)\,p(Z_2,\ldots,Z_{d+1}\mid Y_2,\ldots,Y_{d+1}).
\]
Put $C=Y_2+\cdots+Y_d$, which is nonzero since $d\geq2$.  Over an algebraic closure $\mathbb K$ of $\QQ(Y_2,\ldots,Y_d)$, the polynomial $S^r-Y_{d+1}^r$ factors into the linear forms $Y_1+C+(1-\zeta)Y_{d+1}$ and $S^r-Y_1^r$ into the linear forms $Y_{d+1}+C+(1-\eta)Y_1$, where $\zeta$ and $\eta$ run through the $r$-th roots of unity.  Two such forms can only be proportional with factor one, by comparing the constant term $C$, and then the coefficients of $Y_1$ give $\eta=0$, which is impossible.  Hence the two polynomials are coprime in $\mathbb K[Y_1,Y_{d+1}]$.  Since $S^r-Y_{d+1}^r$ is monic in $Y_1$, it has no factor in $\QQ[Y_2,\ldots,Y_{d+1}]$, so the two polynomials are coprime in $\QQ[Y_1,\ldots,Y_{d+1}]$ and hence in $\QQ[Z,Y]$.  Therefore $S^r-Y_{d+1}^r$ divides $p(Z_2,\ldots,Z_{d+1}\mid Y_2,\ldots,Y_{d+1})$. The latter does not involve $Y_1$, while $S^r-Y_{d+1}^r$ has degree $r$ in $Y_1$.  Therefore $p=0$.
\end{proof}

\begin{lem}\label{lem:depth-two}
For $A,B\in\BARI^{\mathrm{pol}}$, the depth-two component of $\uri(A,B)$ depends only on $A_1$ and $B_1$.  Namely,
\begin{equation}\label{eq:depth-two}
 \uri(A,B)_2=\Phi_2(A_1,B_1)-\Phi_2(B_1,A_1),
\end{equation}
where, for polynomials $f,g$ in one pair of variables,
\[
 \Phi_2(f,g)
 =f\bi{X_2}{Y_1+Y_2}g\bi{X_1-X_2}{Y_1}
 -f\bi{X_1}{Y_1+Y_2}g\bi{X_2-X_1}{Y_2}
 +f\bi{X_1}{Y_1}g\bi{X_2}{Y_2}.
\]
\end{lem}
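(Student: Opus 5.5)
We expand $\uri(A,B)=\preuri(A,B)-\preuri(B,A)$ using \eqref{eq:uri-preuri}, and compute the depth-two component of $\preuri(A,B)=\urit(B)(A)+\muop(A,B)$ directly from the definitions. The plan is to show that $\preuri(A,B)_2$ equals $\Phi_2(A_1,B_1)$ plus a term symmetric in $A$ and $B$. That symmetric term then cancels after antisymmetrizing, which gives \eqref{eq:depth-two}.

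\emph{Step one: the $\anit(C(B))$ contribution.} In \eqref{eq:preuri-definition} we have $\urit(B)=\arit(B)+\anit(C(B))$. By \eqref{eq:C-definition}, $C(B)_0=C(B)_1=0$. Moreover $\axit$ is additive in depth: each term of $\anit(C)(A)$ in depth $m$ uses $C_j$ and $A_{m-j}$ with $j\ge1$, and here we also need $m-j\ge1$. So in depth two only $C(B)_1$ could contribute, and it vanishes. Hence $\preuri(A,B)_2=\preari(A,B)_2$, and only $A_1$ and $B_1$ enter. This proves the first assertion.

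\emph{Step two: expand $\preari(A,B)_2$.} The concatenation part is $\muop(A,B)_2=A_1(w_1)B_1(w_2)$, which is the third term of $\Phi_2(A_1,B_1)$. For $\arit(B)(A)=\amit(B)(A)-\anit(B)(A)$ we use the linearized gaxit formula, in the same way as in the proofs of \cref{lem:partial-one} and \cref{lem:lowest-component}(iii). In depth two the block has length two, $B$ occupies one letter, and $A$ is evaluated on the marked letter, with $Y$ merged to $Y_1+Y_2$.

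For $\amit$, the marked letter is the second one, $p=2$, and the $B$-factor is $B^{(X_2)}\langle1,1\rangle$. This gives $A_1\bi{X_2}{Y_1+Y_2}B_1\bi{X_1-X_2}{Y_1}$.

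For $\anit$, the marked letter is the first one, and the $C$-slot factor is $B^{(X_1)}\langle2,2\rangle$. This gives $A_1\bi{X_1}{Y_1+Y_2}B_1\bi{X_2-X_1}{Y_2}$, entering with a minus sign.

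Together these give exactly $\Phi_2(A_1,B_1)$, with no leftover symmetric term.

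\emph{Step three: antisymmetrize.} Subtracting the same expression with $A$ and $B$ exchanged gives \eqref{eq:depth-two}. The only point needing care is the sign conventions: the choice of which letter is marked in amit versus anit, and $\arit(B)=\axit(B,-B)$. As a check we compare with \cref{lem:partial-one} for $A=\xi_1$, where $\Phi_2(1,g)-\Phi_2(g,1)$ should reproduce the $r=2$ formula there. We also compare with the depth-two formula for $\ari(f_r,p)$ in \eqref{eq:ari-f-r} with $d=1$. I expect this bookkeeping of marked positions to be the main obstacle. The rest is immediate.
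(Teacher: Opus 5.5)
Your proof is correct and follows the paper's argument. You kill $\anit(C(B))(A)$ in depth two using $C(B)_1=0$, then read off the amit term (marked second letter), the anit term (marked first letter, with the minus sign from $\arit=\amit-\anit$) and the concatenation term to get $\preuri(A,B)_2=\Phi_2(A_1,B_1)$, and finally antisymmetrize via \eqref{eq:uri-preuri}; as you noticed, the symmetric leftover term in your initial plan never occurs.
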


\begin{proof}
By \eqref{eq:preuri-definition} and \eqref{eq:uri-preuri}, it suffices to compute the depth-two component of $\preuri(A,B)$.  Since $C(B)_1=0$, the term $\anit(C(B))(A)$ vanishes in depth two.  In $\arit(B)(A)_2$ the nontrivial block has length two.  For amit its marked letter is the second one, which gives the first term of $\Phi_2$, and for anit it is the first one, which gives the second term with the sign of $\arit=\amit-\anit$.  The last term is $\muop(A,B)_2$.
\end{proof}

\begin{prop}\label{prop:odd-depth-one}
Let $r\geq1$ and let $g\in\QQ[X,Y]$.  If $\Phi_2(f_r,g)=\Phi_2(g,f_r)$ with $\Phi_2$ as in \cref{lem:depth-two} and $f_r=(XY)^r$, then $g=c\,(XY)^r$ for some $c\in\QQ$.
\end{prop}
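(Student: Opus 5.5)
The plan is to write out the identity $\Phi_2(f_r,g)=\Phi_2(g,f_r)$ explicitly and then extract information from it by a few well-chosen specializations of the variables. No leading-term machinery should be needed. With $S=Y_1+Y_2$ and $f_r=(XY)^r$, the identity reads
\[
 X_2^rS^r g\bi{X_1-X_2}{Y_1}-X_1^rS^r g\bi{X_2-X_1}{Y_2}+(X_1Y_1)^r g\bi{X_2}{Y_2}
 =(X_1-X_2)^rY_1^r g\bi{X_2}{S}-(X_2-X_1)^rY_2^r g\bi{X_1}{S}+(X_2Y_2)^r g\bi{X_1}{Y_1}.
\]
Since every specialization below is a ring homomorphism, each one yields a valid polynomial identity.

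First I will set $X_1=X_2=X$. Because $r\geq1$, both terms on the right containing $(X_1-X_2)^r$ vanish, and after dividing by $X^r$ we get
\[
 S^r\bigl(g(0,Y_1)-g(0,Y_2)\bigr)+Y_1^rg(X,Y_2)=Y_2^rg(X,Y_1).
\]
I will then put $Y_1=0$, so $S=Y_2$, and divide by $Y_2^r$. This gives $g(0,0)-g(0,Y_2)=g(X,0)$ identically in $X$ and $Y_2$. The left side does not involve $X$ and the right side does not involve $Y_2$, so both sides are constant. Hence $g(0,Y)=g(0,0)$ is constant and $g(X,0)$ equals the constant $0$. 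In particular $g(0,0)=0$, so $g(0,Y)=0$. The displayed identity then reduces to $Y_1^rg(X,Y_2)=Y_2^rg(X,Y_1)$, which forces $g(X,Y)=Y^rh(X)$ for some polynomial $h$.

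Next I will substitute $g=Y^rh(X)$ back into the full identity and specialize $Y_2=0$. Then $S=Y_1$, and every term carrying a factor $Y_2^r$, or $g(\cdot,Y_2)=Y_2^r h(\cdot)$, drops out. What remains is
\[
 X_2^rY_1^{2r}h(X_1-X_2)=(X_1-X_2)^rY_1^{2r}h(X_2).
\]
Writing $u=X_1-X_2$, this becomes $X_2^rh(u)=u^rh(X_2)$ in the independent variables $u$ and $X_2$. Comparing coefficients gives $h(u)=c\,u^r$, and therefore $g=c(XY)^r$.

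I do not expect a genuine obstacle. The only care needed is bookkeeping in the first specialization: checking that the terms with $(X_1-X_2)^r$ really vanish because $r\geq1$, and correctly separating the constant $g(0,0)$ in the step that forces $g(0,Y)=0$.
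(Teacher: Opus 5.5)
Your proof is correct and follows essentially the paper's route: a specialization reduces to $g=Y^rh(X)$, and then $Y_2=0$ gives $X_2^rh(X_1-X_2)=(X_1-X_2)^rh(X_2)$, exactly as in the paper. The only difference is your first specialization: you use the diagonal $X_1=X_2$ where the paper sets $X_2=0$, which gives the slightly cleaner relation $Y_1^rg(X,Y_2)=Y_2^rg(X,Y_1)$ instead of $S^rg(-X_1,Y_2)=(-1)^rY_2^rg(X_1,S)$.
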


\begin{proof}
Put $S=Y_1+Y_2$.  Setting $X_2=0$ in $\Phi_2(f_r,g)-\Phi_2(g,f_r)=0$ and dividing by $X_1^r$ gives
\begin{equation}\label{eq:depth-one-restricted}
 -S^rg(-X_1,Y_2)+Y_1^rg(0,Y_2)-Y_1^rg(0,S)+(-1)^rY_2^rg(X_1,S)=0.
\end{equation}
If we also set $X_1=0$ and $Y_2=0$, we get $Y_1^rg(0,Y_1)=0$, so $g(0,Y)=0$, and \eqref{eq:depth-one-restricted} becomes $S^rg(-X_1,Y_2)=(-1)^rY_2^rg(X_1,S)$.  Since $Y_2$ and $S$ are independent variables, $Y_2^r$ divides $g(-X_1,Y_2)$, so $g(x,y)=y^rq(x,y)$, and then $q(-x,a)=(-1)^rq(x,b)$ for independent variables $a,b$.  Hence $q$ does not depend on its second variable, and $g(x,y)=y^rh(x)$.  Finally, setting $Y_2=0$ in $\Phi_2(f_r,g)-\Phi_2(g,f_r)=0$ and dividing by $Y_1^{2r}$ gives
\[
 X_2^rh(X_1-X_2)=(X_1-X_2)^rh(X_2).
\]
With the independent variables $u=X_2$ and $v=X_1-X_2$ this reads $u^rh(v)=v^rh(u)$, so $h(u)=h(1)u^r$.
\end{proof}

\begin{thm}\label{thm:odd-kernels}
For every $r\geq1$, one has
\[
 \ker(\partial_{2r+1})=\QQ\checkdelta^r\xi_1.
\]
\end{thm}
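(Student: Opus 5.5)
The inclusion $\QQ\checkdelta^r\xi_1\subseteq\ker(\partial_{2r+1})$ is immediate from antisymmetry of uri. For the converse, the kernel is $\checkW$-stable, since $[\checkW,\partial_{2r+1}]=(2r+1)\partial_{2r+1}$. It therefore suffices to take a homogeneous $A\in\mathfrak B_k$ with $\uri(\checkdelta^r\xi_1,A)=0$, $A\neq0$, and show that $A$ is a multiple of $\checkdelta^r\xi_1$.

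The plan is a lowest-depth argument. Put $F=\checkdelta^r\xi_1$. Since $(\checkdelta B)_1=X_1Y_1B_1$ (shown in the proof of \cref{thm:main}), we have $d(F)=1$ with $F_1=(X_1Y_1)^r=(f_r)_1$. Let $d=d(A)$.

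By \cref{lem:lowest-component}(ii), the component of $\uri(F,A)$ in depth $d+1$ is $\ari(f_r,A_d)_{d+1}$, which must vanish. By \cref{lem:lowest-component}(i), $A_d$ is alternal, since $A\in\mathfrak B$ is alternil. If $d\geq2$, \cref{prop:odd-higher-depth} forces $A_d=0$, contradicting the choice of $d$. Hence $d=1$.

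In depth two, \cref{lem:depth-two} gives $0=\uri(F,A)_2=\Phi_2(f_r,A_1)-\Phi_2(A_1,f_r)$. Then \cref{prop:odd-depth-one} yields $A_1=c\,(X_1Y_1)^r$ for some $c\in\QQ$.

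Now set $A'=A-cF$. It lies in $\ker(\partial_{2r+1})\cap\mathfrak B$, is homogeneous of the same weight (necessarily $k=2r+1$ when $c\neq0$; otherwise $A$ itself has $A_1=0$), and satisfies $A'_1=0$. If $A'\neq0$, then $d(A')\geq2$, and the same depth-$(d+1)$ argument with \cref{prop:odd-higher-depth} gives a contradiction. So $A'=0$ and $A=cF$.

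The main obstacle is the higher-depth step, and it is already handled by \cref{prop:odd-higher-depth}. One point needs care: the proposition is stated for a single depth-$d$ polynomial $p$, whereas $A$ has higher-depth components too. \cref{lem:lowest-component}(ii) is exactly what reduces the depth-$(d+1)$ component of $\uri(F,A)$ to $\ari(F_1,A_d)$, and it uses $d(F)=1$ and $F_1=(X_1Y_1)^r$, which I would verify explicitly from the depth-one formula for $\checkdelta$. With that in place, the proof is a short assembly of the preceding lemmas.
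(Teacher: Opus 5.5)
Your proposal is correct and is essentially the paper's proof: reduce to the lowest nonzero depth via \cref{lem:lowest-component}, exclude $d\geq2$ by \cref{prop:odd-higher-depth}, identify $A_1=c\,(X_1Y_1)^r$ via \cref{lem:depth-two} and \cref{prop:odd-depth-one}, and then rerun the argument on $A-c\,\checkdelta^r\xi_1$. Your preliminary reduction to homogeneous $A$ using $\checkW$ is harmless but unnecessary, since the lowest-depth argument already applies to arbitrary $A\in\mathfrak B$.
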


\begin{proof}
Let $r\geq1$ and put $x_r=\checkdelta^r(\xi_1)$.  Since $(\checkdelta A)_1=X_1Y_1A_1$, the depth-one component of $x_r$ is $(X_1Y_1)^r=(f_r)_1$.  Let $A\in\mathfrak B$ be nonzero with $\uri(x_r,A)=0$, and put $d=d(A)$.  By \cref{lem:lowest-component}(i) and (ii), the polynomial $A_d$ is alternal and $\ari(f_r,A_d)_{d+1}=0$.  If $d\geq2$, then \cref{prop:odd-higher-depth} gives $A_d=0$, a contradiction.  Hence $d=1$.  By \cref{lem:depth-two}, the condition $\ari(f_r,A_1)_2=0$ is the hypothesis of \cref{prop:odd-depth-one}, so $A_1=c\,(X_1Y_1)^r$ for some $c\in\QQ$.  The element $A-cx_r$ of $\mathfrak B$ still commutes with $x_r$ and has vanishing depth-one component.  If it were nonzero, its lowest component would have depth at least two, which we have just excluded.  Hence $A=cx_r$.
\end{proof}

Since $\checkD\checkdelta^r(\xi_1)=r^2\checkdelta^{r-1}(\xi_1)\neq0$, the theorem also shows that the restriction of $\partial_{2r+1}$ to $\mathfrak m$ is injective for every $r\geq1$.

The proof only uses that $A$ and $x_r$ are alternil.  Hence the kernel of $A\mapsto\uri(\checkdelta^r(\xi_1),A)$ is one-dimensional already in the uri Lie algebra of all finite-depth polynomial alternil bimoulds.

For every odd $k\geq3$, let $\sigma_k\in\mathfrak{dm}_0$ be the canonical genus-zero element constructed in \cite{DDDHKSSV}, normalized by $(\sigma_k\mid x_0^{k-1}x_1)=1$.  These elements generate the motivic Lie subalgebra of $\mathfrak{dm}_0$, and it is conjectured that
\[
 \mathfrak{dm}_0
 =\operatorname{Lie}
 \left\langle\sigma_k\ \middle|\ k\geq3\text{ odd}\right\rangle
\]
(see \cite{Bu1}*{Conjecture~B.33}).  Together with the central generator in \eqref{eq:dm-plus}, we define
\[
 \xi_k=\iota(\sigma_k)
 \quad(k\geq1\text{ odd}).
\]
The element $\sigma_1$ is formal and does not belong to Racinet's $\mathfrak{dm}_0$.

For a normalized $\sigma_k$, we have $a_{\sigma_k}=1$, so the constant term in depth $k$ is $1/k$.  As an example,
\[
 \Pi_Y(\sigma_3)=y_3+y_2y_1-2y_1y_2,
\]
and \eqref{eq:dm-to-m} gives
\[
 \xi_3=
 \left(
 X_1^2+Y_1^2,\,
 X_1-2X_2-Y_1+Y_2,\,
 \frac13,0,\ldots
 \right).
\]

The normalization gives
\[
 (\xi_1)_1=1,\qquad
 (\xi_k)_1=X_1^{k-1}+Y_1^{k-1}
 \quad(k\geq3\text{ odd})\,.
\]
Put $U=\langle\xi_k\mid k\geq1\text{ odd}\rangle_{\uri_\bullet}$, and let $L$ be the uri Lie subalgebra generated by the $\checkdelta^r(\xi_k)$.  The span identity in \cref{lem:uri-n-generation} shows that $\sum_{r\geq0}\checkdelta^r(U)$ is closed under uri, so $L$ is contained in this sum.  Conversely, $L$ is $\checkdelta$-stable because $\checkdelta$ is a uri derivation and sends each generator $\checkdelta^r(\xi_k)$ to another generator.  Formula \eqref{eq:uri-n} then gives $U\subseteq L$ by induction on the number of operations, and $\checkdelta$-stability gives $\sum_{r\geq0}\checkdelta^r(U)\subseteq L$.  Hence $L=\bigoplus_{r\geq0}\checkdelta^r(U)$, where directness follows from \cref{thm:B}(iii).  Together with the decomposition of $\mathfrak B$ in \cref{thm:B}(iii), this shows that $U=\mathfrak m$ if and only if $L=\mathfrak B$, which proves the equivalence of the two parts of \cref{conj:uri-n-generation}.

\subsection{Generators and relations}\label{sec:generators-relations}

The dimension conjecture for $\mathcal G^{\!f}$ of \cite{BK} (see \cite{BKa}) together with \eqref{eq:Gf-conjecture} predicts the Hilbert--Poincar\'e series of $\mathcal U(\mathfrak B)$.  As in \cite{BKa}, let
\[
 \mathsf D(X)=\frac1{1-X^2},\qquad
 \mathsf O(X)=\frac{X^3}{1-X^2},\qquad
 \mathsf W(X)=\frac{1+X^{12}}{(1-X^4)(1-X^6)}-1,
\]
where $\mathsf W(X)$ is the Hilbert--Poincar\'e series of the period polynomials.  Then
\begin{equation}\label{eq:HU}
 \prod_{k\geq1}\bigl(1-X^k\bigr)^{-\dim_\QQ\mathfrak B_k}
 \overset{?}{=}
 \frac{1}{1-\mathsf D(X)\bigl(X+\mathsf O(X)\bigr)
          +\mathsf D(X)\mathsf W(X)},
\end{equation}
where the left-hand side is the Hilbert--Poincar\'e series of $\mathcal U(\mathfrak B)$ by the Poincar\'e--Birkhoff--Witt theorem. The generator and relation conjecture recorded in \cite{Bu1}*{Conjecture~1.22(ii)--(iii)} explains the denominator. Namely, $\mathsf D(X)(X+\mathsf O(X))$ counts one generator in each odd weight together with its iterates under $\checkdelta$, and $\mathsf D(X)\mathsf W(X)$ counts one relation for each period polynomial together with its $\checkdelta$-derivatives.  In this section we make both counts explicit.  \cref{conj:uri-n-generation} is the generator part, and the relations are studied below.

Notice that \cref{conj:uri-n-generation} does not say that $\dim\mathfrak m_k=1$ in odd weight.  It says that there is one generator which is indecomposable with respect to the $\uri_n$ in every odd weight.  Every uri bracket has zero depth-one component, since $T$ and $T^{-1}$ act as the identity in depth one and the depth-one component of ari vanishes.  Thus every nontrivial expression obtained by iterating the $\uri_n$ has zero depth-one component.  Since $(\xi_k)_1\neq0$, at least one generator in each odd weight is necessary.

Part (ii) of \cref{conj:uri-n-generation} says that $\mathfrak B$ is generated under uri by the elements $\checkdelta^m(\xi_k)$ with $m\geq0$.  They give the canonical choice of the elements denoted by $\xi_k^m$ in \cite{BKa}*{Section~9}, so part (ii) is the corresponding generator conjecture for this uri realization.  The operations $\uri_n$ with $n\geq1$ are needed.  For example, $\uri(\xi_1,\xi_3)=0$ by the Eisenstein relations of \cite{BKa}*{Section~9}, while $\uri_1(\xi_1,\xi_1)$ is a nonzero element of $\mathfrak m_4$.

More generally, let $V$ be any graded subspace of $\mathfrak m$.  Its $\mathfrak{sl}_2$-span has the direct decomposition
\[
 \bigoplus_{r\geq0}\checkdelta^r(V).
\]
This is the analogue of \cref{thm:B}(iii) for $V$, but it describes only the subspace generated by $V$, not all of $\mathfrak B$.  Take $V=\mathfrak d=\iota(\mathfrak{dm}_0^+)$, where the second equality is \cref{thm:E}.  Equation \eqref{eq:separated-image} identifies $\mathfrak d$ with the subspace of $\mathfrak B$ without mixed $X,Y$-monomials.  However, the displayed depth-two component of $\uri_1(\xi_1,\xi_1)$ contains mixed monomials.  This element of $\mathfrak m$ therefore does not lie in $\mathfrak d$.  Directness in \cref{thm:B}(iii) gives
\[
 \mathfrak m\cap\bigoplus_{r\geq0}\checkdelta^r(\mathfrak d)=\mathfrak d,
\]
so the $\mathfrak{sl}_2$-submodule generated by $\mathfrak d$ is proper. It is also not closed under uri, since
\[
 \uri(\xi_1,\checkdelta(\xi_1))
 =\frac12\uri_1(\xi_1,\xi_1).
\]

The first relations coming from cusp forms appear in weight twelve.  We now explain how period polynomials give elements of $\mathfrak m$ which vanish in low depth.  The construction only uses the depth-one components.

Fix an even integer $k\geq4$.  Let
\[
 W_k=\left\{f\in\QQ[x,y]\text{ homogeneous of degree }k-2
 \ \middle|\
 \begin{aligned}
  &f(x,y)+f(y,-x)=0,\\
  &f(x,y)+f(x-y,x)+f(-y,x-y)=0
\end{aligned}\right\}
\]
be the space of period polynomials, and let $W_k^\pm$ be the eigenspaces of $f(x,y)\mapsto f(-x,y)$.  By the Eichler--Shimura isomorphism, $\dim W_k^+=\dim M_k$ and $\dim W_k^-=\dim S_k$, where $M_k$ is the space of modular forms of weight $k$ and $S_k$ is its subspace of cusp forms.  The Eisenstein period polynomial is $p_{E,k}=x^{k-2}-y^{k-2}\in W_k^+$.  For $f,g\in W_k$ of the same parity, put
\begin{equation}\label{eq:P-fg}
\begin{aligned}
 P_{f,g}\bi{X_1,X_2}{Y_1,Y_2}
 ={}&\frac{1}{(k-2)!^2}
 \Bigl[\bigl(f(\partial_a,\partial_b)g(\partial_c,\partial_d)
 +g(\partial_a,\partial_b)f(\partial_c,\partial_d)\bigr)\\
 &\qquad\cdot
 \bigl(X_1ac+X_2ad-Y_1bd+Y_2bc\bigr)^{k-2}
 \Bigr]_{a=b=c=d=0}.
\end{aligned}
\end{equation}
Evaluating at rank-one matrices gives
\[
 P_{f,g}\bi{\alpha\gamma,\alpha\delta}{-\beta\delta,\beta\gamma}
 =f(\alpha,\beta)g(\gamma,\delta)
  +g(\alpha,\beta)f(\gamma,\delta).
\]
Thus \eqref{eq:P-fg} is the inverse map in \cite{Co}*{Theorem~3.42}, after interchanging the $X$- and $Y$-variables. A similar formula is used in \cite{BKS}. By \cite{Co}*{Theorem~3.54(iii)}, it is killed by
\[
 \Delta=\partial_{X_1}\partial_{Y_1}
       +\partial_{X_2}\partial_{Y_2}
\]
and satisfies the three-term identity
\begin{equation}\label{eq:manin}
 0=P_{f,g}\bi{X_1,X_2}{Y_1,Y_2}
 +P_{f,g}\bi{X_2,X_1-X_2}{Y_1+Y_2,Y_1}
 +P_{f,g}\bi{X_2-X_1,X_1}{Y_2,Y_1+Y_2}.
\end{equation}
If $f,g$ run through the unordered pairs of elements of a basis of $W_k^+$ or of a basis of $W_k^-$, the polynomials $P_{f,g}$ are linearly independent, and there are $(\dim M_k)^2$ of them by \cite{Co}*{Theorem~3.54(iii) and Corollary~3.55}.

For odd $\ell\geq1$ and $m\geq0$, the identity $(\checkdelta A)_1=X_1Y_1A_1$ gives
\[
 g_{\ell,m}=\bigl(\checkdelta^m\xi_\ell\bigr)_1
 =\begin{cases}
   (X_1Y_1)^m\bigl(X_1^{\ell-1}+Y_1^{\ell-1}\bigr),&\ell\geq3,\\
   (X_1Y_1)^m,&\ell=1.
  \end{cases}
\]
Given an even $d$, the polynomials $g_{d-2m+1,m}$ with $0\leq m\leq d/2$ form a basis of the symmetric homogeneous polynomials of degree $d$ in one pair of variables.  By \cite{Co}*{Theorem~3.54(iii)}, the polynomial $P_{f,g}$ is antisymmetric under exchange of the two pairs and invariant under both $X_i\leftrightarrow Y_i$ and $(X_i,Y_i)\mapsto(-X_i,-Y_i)$ for each $i$.  Hence it belongs to the exterior square of the span of the $g_{\ell,m}$ and has a unique expansion
\begin{equation}\label{eq:P-expansion}
 P_{f,g}\bi{X_1,X_2}{Y_1,Y_2}
 =\sum c_{(\ell,m),(j,n)}
 \Bigl(g_{\ell,m}\bi{X_1}{Y_1}g_{j,n}\bi{X_2}{Y_2}
      -g_{\ell,m}\bi{X_2}{Y_2}g_{j,n}\bi{X_1}{Y_1}\Bigr),
\end{equation}
where the sum runs over pairs of indices with $\ell+2m+j+2n=k$ and $(\ell,m)<(j,n)$ in lexicographic order.

For $m\geq1$, the identities
\[
 \partial_X\partial_Yg_{\ell,m}
 =m(\ell+m-1)g_{\ell,m-1},
 \qquad
 \checkD\checkdelta^m(\xi_\ell)
 =m(\ell+m-1)\checkdelta^{m-1}(\xi_\ell)
\]
and the derivation property of $\checkD$ show that the first sum below is killed by $\checkD$, since $\Delta P_{f,g}=0$.  It is therefore equal to the sum of the projections of its terms to $\mathfrak m$.  Applying \cref{prop:uri-n-projection} gives the direct definition
\begin{equation}\label{eq:C-fg}
\begin{aligned}
 \mathcal C_{f,g}
 :={}&\sum c_{(\ell,m),(j,n)}
 \uri\bigl(\checkdelta^m\xi_\ell,\checkdelta^n\xi_j\bigr)\\
 ={}&\sum c_{(\ell,m),(j,n)}
 \frac{(-1)^m}{\binom{2(m+n)+\ell+j-2}{m+n}}\,
 \uri_{m+n}(\xi_\ell,\xi_j)\in\mathfrak m_k.
\end{aligned}
\end{equation}
Similar brackets as in the first line of \eqref{eq:C-fg} and their depth-four components are also studied in \cite{BKS}, with other conventions and other choices of the $\xi_\ell$.  In the second line we use our fixed elements $\xi_\ell$ and write $\mathcal C_{f,g}$ in terms of the operations $\uri_n$.  In both lines every term contains two of the generators, where $\checkdelta^m(\xi_\ell)$ counts as one occurrence of $\xi_\ell$. A similar depth-two and parity argument as in the following proof is also used there.

\begin{proof}[Proof of \cref{thm:G}(i)]
By \cref{lem:depth-two} and the first line of \eqref{eq:C-fg}, the depth-two component of $\mathcal C_{f,g}$ is the right-hand side of \eqref{eq:manin}, hence it is zero.  Every uri bracket has zero depth-one component, so $(\mathcal C_{f,g})_1=0$.  Suppose that $(\mathcal C_{f,g})_3\neq0$.  Then $d(\mathcal C_{f,g})=3$, and \cref{lem:lowest-component}(i) shows that $(\mathcal C_{f,g})_3$ is alternal.  It is also swap invariant, since swap preserves the depth and $\mathcal C_{f,g}$ is swap invariant.  Regard $(\mathcal C_{f,g})_3$ as a bimould concentrated in depth three.  Its depth-one component is zero and hence even.  The proof of \cite{Sc}*{Lemma~2.5.5} also works for bimoulds and gives
\[
 (\mathcal C_{f,g})_3(-X_1,-X_2,-X_3\mid -Y_1,-Y_2,-Y_3)
 =(\mathcal C_{f,g})_3(X_1,X_2,X_3\mid Y_1,Y_2,Y_3)\,.
\]
On the other hand, this polynomial has ordinary degree $k-3$, which is odd.  This contradiction proves the claim.
\end{proof}

We use the following period polynomials in weight twelve:
\begin{align*}
 p_E&=x^{10}-y^{10},\\
 p^+_\Delta
 &=\frac{36}{691}(x^{10}-y^{10})-x^2y^2(x^2-y^2)^3,\\
 p^-_\Delta&=4(xy^9+x^9y)-25(x^3y^7+x^7y^3)+42x^5y^5\,.
\end{align*}

\goodbreak
\begin{prop}\label{prop:period-classes}
In weight twelve, the following identities hold.
\begin{enumerate}
\item[(i)] For the even period polynomials,
\begin{align*}
 \mathcal C_{p_E,p_E}
  ={}&-2\uri_0(\xi_1,\xi_{11})=0,\\
 \mathcal C_{p_E,p^+_\Delta}
  ={}&\uri_0(\xi_3,\xi_9)-3\uri_0(\xi_5,\xi_7),\\
 \mathcal C_{p^+_\Delta,p^+_\Delta}
  ={}&\frac{72}{691}\uri_0(\xi_3,\xi_9)
      -\frac{216}{691}\uri_0(\xi_5,\xi_7)
      -\frac{2}{45}\uri_2(\xi_1,\xi_7)
      +\frac{2}{15}\uri_2(\xi_3,\xi_5)
      -\frac{3}{35}\uri_4(\xi_1,\xi_3).
\end{align*}
\item[(ii)] For the odd period polynomial,
\begin{align*}
 \mathcal C_{p^-_\Delta,p^-_\Delta}
  ={}&7\uri_5(\xi_1,\xi_1)
      +\frac{125}{12}\uri_3(\xi_1,\xi_5)
      +\frac{16}{5}\uri_1(\xi_1,\xi_9)\\
     &-\frac{35}{4}\uri_3(\xi_3,\xi_3)
      -20\uri_1(\xi_3,\xi_7)
      +\frac{84}{5}\uri_1(\xi_5,\xi_5).
\end{align*}
\end{enumerate}
\end{prop}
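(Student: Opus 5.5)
The identities are finite computations, and I would carry them out in the following order. First, for each of the four period polynomials in weight twelve, I would compute $P_{f,g}$ from \eqref{eq:P-fg}. This is a bihomogeneous polynomial of degree $10$ in two pairs of variables. Rather than expanding the differential operator, I would use the rank-one evaluation
\[
P_{f,g}\bi{\alpha\gamma,\alpha\delta}{-\beta\delta,\beta\gamma}=f(\alpha,\beta)g(\gamma,\delta)+g(\alpha,\beta)f(\gamma,\delta).
\]
Together with the fact that $P_{f,g}$ lies in the exterior square of the span of the $g_{\ell,m}$, this should determine the coefficients $c_{(\ell,m),(j,n)}$ in \eqref{eq:P-expansion} by solving a small linear system. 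The system arises by pairing degree-$(a,b)$ blocks: $g_{\ell,m}$ restricted to $X_1=\alpha\gamma$, $Y_1=-\beta\delta$ becomes $(-\alpha\beta\gamma\delta)^m((\alpha\gamma)^{\ell-1}+(-\beta\delta)^{\ell-1})$, and the terms are matched against the monomials of $f(\alpha,\beta)g(\gamma,\delta)+g(\alpha,\beta)f(\gamma,\delta)$.

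For $p_E$ the product $p_E\otimes p_E$ only involves pure powers $x^{10},y^{10}$. I expect only the term with $(\ell,m)=(1,0)$ and $(j,n)=(11,0)$ to survive, which gives $-2\uri_0(\xi_1,\xi_{11})$. This vanishes by the Eisenstein relation $\uri(\xi_1,\xi_k)=0$, that is, by $\xi_{11}\in\mathfrak d$ (\cref{prop:dm-to-m}). For the mixed case $p_E,p^+_\Delta$, the $x^2y^2(x^2-y^2)^3$ part yields products of $X_1^{a}$ and $X_2^{b}$-type pieces with $m=n=0$, giving $\uri_0(\xi_3,\xi_9)-3\uri_0(\xi_5,\xi_7)$ from the binomial coefficients $1,-3$ in $(x^2-y^2)^3$.

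Second, once the coefficients $c$ are known, I would convert each term $\uri(\checkdelta^m\xi_\ell,\checkdelta^n\xi_j)$ into $\uri_{m+n}(\xi_\ell,\xi_j)$ using the second line of \eqref{eq:C-fg}, that is, \cref{prop:uri-n-projection}. I would then collect terms with the same pair $\{\ell,j\}$ and the same $n$. The symmetry $\uri_n(g,f)=(-1)^{n+1}\uri_n(f,g)$ from \cref{thm:C} normalizes the order of the arguments, and for $\ell=j$ with $n$ even it kills diagonal terms.

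This explains the shape of the answers. In (i) only even $m+n$ occur, namely $\uri_0,\uri_2,\uri_4$. In (ii) the odd eigenspace forces $m+n$ odd, giving $\uri_1,\uri_3,\uri_5$, and symmetric terms such as $\uri_5(\xi_1,\xi_1)$ survive. For example, the coefficient $\tfrac{72}{691}$ is $2\cdot\tfrac{36}{691}$ from the Eisenstein part of $p^+_\Delta$ paired with the cusp part, and $-\tfrac{216}{691}=-3\cdot\tfrac{72}{691}$ comes the same way. The remaining constants, such as $-\tfrac{2}{45}$ and $\tfrac{2}{15}$, are the products of the $c$'s with the reciprocal binomials $\binom{2(m+n)+\ell+j-2}{m+n}^{-1}$ and the signs $(-1)^m$.

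The main obstacle is purely bookkeeping: extracting the expansion \eqref{eq:P-expansion} exactly for $p^\pm_\Delta\otimes p^\pm_\Delta$, where many $(\ell,m),(j,n)$ pairs contribute. I would organize this by bidegree. Writing a symmetric monomial pair of $(X_1,Y_1)$-bidegree $(a,b)$ with $a\ge b$ as $(X_1Y_1)^b(X_1^{a-b}+Y_1^{a-b})$ identifies $m=b$ and $\ell=a-b+1$ uniquely, modulo the symmetrization under $X_i\leftrightarrow Y_i$. Reading off the coefficient of $X_1^{a}Y_1^{b}X_2^{c}Y_2^{d}$ with $a\ge b$ and $c\ge d$ then gives $c_{(\ell,m),(j,n)}$ directly, with the antisymmetrization fixing the sign. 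I would carry this out, ideally checked by computer, and then verify the final linear combinations term by term.
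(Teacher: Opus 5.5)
Your plan is the paper's proof: expand $P_{f,g}$ in the basis \eqref{eq:P-expansion}, convert with the second line of \eqref{eq:C-fg}, and discard $\uri_0(\xi_1,\xi_{11})$ by the Eisenstein relation. Your structural remarks are correct: the parity of $N=m+n$, the vanishing of $\uri_N(\xi_\ell,\xi_\ell)$ for even $N$, and the origin of $\tfrac{72}{691}$ and $-\tfrac{216}{691}$. Note that $\uri_0(\xi_1,\xi_{11})$ also occurs in $\mathcal C_{p_E,p^+_\Delta}$ and in $\mathcal C_{p^+_\Delta,p^+_\Delta}$, coming from the Eisenstein component of $p^+_\Delta$, with coefficients $-\tfrac{72}{691}$ and $-2\bigl(\tfrac{36}{691}\bigr)^2$. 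It is dropped the same way.

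Two details of your execution need fixing.

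First, the rank-one values plus membership in the exterior square do \emph{not} determine the $c_{(\ell,m),(j,n)}$. The rank-one locus is the quadric $q=X_1Y_1+X_2Y_2=0$. Write $g\wedge g'$ for the antisymmetrized product in \eqref{eq:P-expansion}. Then, for example, $q\,(g_{1,0}\wedge g_{9,0})=g_{1,1}\wedge g_{9,0}+g_{1,0}\wedge g_{9,1}$ lies in the span and vanishes on that locus. To get the actual coefficients, or to read them off $X,Y$-monomials as in your last paragraph, you must also impose $\Delta P_{f,g}=0$.

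For the proposition this is harmless. The element $g_{\ell,m}\wedge g_{j,n}$ with $m+n=N$ restricts to $(-1)^m(\alpha\beta\gamma\delta)^N$ times a polynomial depending only on $(\ell,j,N)$. So the rank-one values determine exactly the alternating sums $\sum_{m+n=N}(-1)^mc_{(\ell,m),(j,n)}$, and only these enter the second line of \eqref{eq:C-fg}. Conceptually, multiplication by $q$ corresponds to $\checkdelta$, and $\operatorname{pr}_{\mathfrak m}\circ\checkdelta=0$. For $h=x^2y^2(x^2-y^2)^3$, these sums in $P_{h,h}$ are $-2,6,-18$ for $(\ell,j,N)=(1,7,2),(3,5,2),(1,3,4)$. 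Dividing by $\binom{10}{2},\binom{10}{2},\binom{10}{4}$ gives the stated $-\tfrac2{45},\tfrac2{15},-\tfrac3{35}$.

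Second, $g_{1,m}=(X_1Y_1)^m$. So for $\ell=1$ the restriction is $(-\alpha\beta\gamma\delta)^m$, without the factor $2$ that your formula $(\alpha\gamma)^{0}+(-\beta\delta)^{0}$ produces. Keeping that factor would give, for example, $\tfrac74$ instead of $7$ in front of $\uri_5(\xi_1,\xi_1)$.
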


\begin{proof}
Expand \eqref{eq:P-fg} in the basis \eqref{eq:P-expansion} and apply \eqref{eq:C-fg}, using $\uri_0(\xi_1,\xi_{11})=0$.
\end{proof}

For $(f,g)\in \{(p_E,p^+_\Delta),(p^+_\Delta,p^+_\Delta), (p^-_\Delta,p^-_\Delta)\}$, \cref{thm:G}(i) and an exact depth-four calculation give
\[
 \mathcal C_{f,g}
 =\bigl(0,0,0,(\mathcal C_{f,g})_4,\ldots\bigr),
 \qquad
 (\mathcal C_{f,g})_4\neq0.
\]
Setting some of the variables to zero gives, for example,
\begin{align*}
 (\mathcal C_{p_E,p^+_\Delta})_4
 \bi{X_1,X_2,0,0}{0,0,0,0}
 &=\frac{691}{144}X_1X_2(X_1^2-X_2^2)^3,\\
 (\mathcal C_{p^+_\Delta,p^+_\Delta})_4
 \bi{X_1,0,0,0}{Y_1,0,0,0}
 &=\frac1{10}X_1^2Y_1^2
   (11X_1^4-14X_1^2Y_1^2+11Y_1^4),\\
 (\mathcal C_{p^-_\Delta,p^-_\Delta})_4
 \bi{X_1,0,0,0}{Y_1,0,0,0}
 &=\frac16X_1Y_1(X_1^2+Y_1^2)\\
 &\qquad{}\times
   (156X_1^4+827X_1^2Y_1^2+156Y_1^4).
\end{align*}
So these three elements are not zero, and a pair $(f,g)$ does not give a relation among the $\uri_n(\xi_a,\xi_b)$ in general.  In the two cases $f=g$ below, however, we obtain a relation after adding terms with more generators.

Let $\mathcal F_{12}$ be the span of the weight-twelve expressions obtained by iterating the operations $\uri_n$ on the $\xi_k$ with at least four generator occurrences.

\begin{prop}\label{prop:cusp-relation-12}
There are explicit higher correction terms $Q^+$ and $Q^-$ in $\mathcal F_{12}$, given in the \hyperref[sec:weight-twelve-corrections]{Appendix}, such that
\begin{equation}\label{eq:cusp-relation-even}
 \mathcal C_{p^+_\Delta,p^+_\Delta}+Q^+=0,
 \qquad
 \mathcal C_{p^-_\Delta,p^-_\Delta}+Q^-=0.
\end{equation}

In particular,
\begin{equation}\label{eq:cusp-relation-classes}
 \mathcal C_{p^+_\Delta,p^+_\Delta}\equiv0,
 \qquad
 \mathcal C_{p^-_\Delta,p^-_\Delta}\equiv0
 \pmod{\mathcal F_{12}}.
\end{equation}
\end{prop}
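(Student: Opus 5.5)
The identities in \eqref{eq:cusp-relation-even} are explicit finite identities in $\mathfrak B_{12}$, and I would prove them computationally, depth by depth. The first step is to reduce to finitely many depths. Every iterated $\uri_n$-expression in the $\xi_k$ is a finite-depth polynomial bimould of weight twelve. A homogeneous depth-$r$ component of weight $k$ has ordinary degree $k-r$, so every component vanishes in depths $r>12$, and there is exactly one constant slot, in depth $12$. It therefore suffices to check that the components in depths $4\le r\le 12$ vanish. Depths at most three are already covered by \cref{thm:G}(i) for $\mathcal C_{f,g}$, and they also vanish for the correction terms, as I explain next.

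The correction terms need some structure. Every expression with at least four generator occurrences is, up to applying $\checkdelta$ and recombining via \cref{lem:uri-n-generation}, a combination of iterated uri brackets of at least four elements $\checkdelta^m\xi_\ell$. By \cref{lem:lowest-component}(ii) such brackets vanish in depths below four. Moreover, their depth-four components depend only on the depth-one components $g_{\ell,m}$, through iterated $\ari$ of depth-one bimoulds. The plan is then as follows.
\begin{itemize}
\item[(a)] Compute $(\mathcal C_{f,g})_4$ from \cref{prop:period-classes}, together with \eqref{eq:uri-n}, \cref{lem:depth-two} and the analogous depth-three and depth-four formulas for $\uri$.
\item[(b)] Parametrize $\mathcal F_{12}$ by a basis of quartic and higher $\uri_n$-words in $\xi_1,\xi_3,\xi_5,\xi_7$ of total weight twelve.
\item[(c)] Solve the linear system killing the depth-four component. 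This determines $Q^\pm$ modulo the elements of $\mathcal F_{12}$ whose depth-four part vanishes.
\item[(d)] Continue to depth five with the remaining freedom (words with five or more generators), and so on up to depth twelve.
\end{itemize}
The Appendix records the resulting $Q^\pm$, so the proof itself is the verification that $\mathcal C_{f,g}+Q^\pm$ vanishes in every depth from $4$ to $12$. I would carry this out with a computer implementation of $\uri$ via \eqref{eq:uri-preuri} and \eqref{eq:preuri-definition}, which avoids poles because everything is polynomial by \cref{prop:preuri-comparison}, and of $\checkdelta$ via \eqref{eq:delta-comparison}.

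To reduce the work, I would use structural constraints. The element $\mathcal C_{f,g}+Q^\pm$ lies in $\mathfrak m_{12}$, by \cref{thm:C} and \cref{thm:B}(i), and it is swap invariant and alternil. If its lowest nonzero component has depth $d$, that component is alternal by \cref{lem:lowest-component}(i). The parity argument of \cref{thm:G}(i) then forces $12-d$ to be even, so only the even lowest depths $4,6,8,10,12$ can occur. Using this, I would verify vanishing only at depths $4,6,8,10,12$ inductively: once the components at depths $\le 2j$ are known to vanish, the component at depth $2j+1$ would be the lowest nonzero one of odd degree, which is impossible. This halves the computation.

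The main obstacle is step (c)/(d): showing that the depth-four obstruction of $\mathcal C_{p^\pm_\Delta,p^\pm_\Delta}$ lies in the depth-four image of $\mathcal F_{12}$, and that the higher-depth residues can then be absorbed consistently. A priori this is not guaranteed, and the mixed case $\mathcal C_{p_E,p^+_\Delta}$ shows that it can fail. I would first try a reduction: compare $\mathcal C_{f,g}+Q^\pm$ with an element of $\mathfrak d$ by checking $\partial_1$ of it, which reduces to the mixed-monomial test of \cref{thm:linear-kernel}. However, a direct exact linear-algebra computation in each even depth is the reliable route, and it is the one I would carry out.
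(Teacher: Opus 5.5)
Your proposal is correct and follows the paper's route: the proposition is established by exact machine computation of $\mathcal C_{p^\pm_\Delta,p^\pm_\Delta}+Q^\pm$ in each depth, with $Q^\pm$ taken from the Appendix. The paper found $Q^\pm$ differently, from linear relations among brackets of the $\checkdelta^m\xi_k$ rewritten via Jacobi and \cref{prop:uri-n-projection}, but that affects only the search, not the proof.

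Your shortcut is a valid refinement of the paper's check in all depths $1\leq r\leq 12$. Depths at most three vanish by \cref{thm:G}(i) for $\mathcal C_{p^\pm_\Delta,p^\pm_\Delta}$ and by \cref{lem:lowest-component}(ii) for $Q^\pm$, and the parity argument of \cref{thm:G}(i) rules out an odd lowest depth, so only depths $4,6,8,10,12$ need to be computed. One correction to your search step (d): in weight twelve the number of generator occurrences is always even, so there are no five-generator words, and the residual freedom should be the full kernel of the depth-four map on $\mathcal F_{12}$ rather than only words with more generators.
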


\begin{proof}
Both identities were found and verified by an exact computation in SageMath.  Two relations among uri brackets of the elements $\checkdelta^m(\xi_k)$ in weight twelve were first determined, and the Jacobi identity was used to write them in terms of the elements $\checkdelta^m(\xi_k)$ alone.  Applying $\operatorname{pr}_{\mathfrak m}$ and \cref{prop:uri-n-projection} recursively expresses them through iterated $\uri_n$-expressions.  We normalize the two relations so that their terms with two generators are $\mathcal C_{p^+_\Delta,p^+_\Delta}$ and $\mathcal C_{p^-_\Delta,p^-_\Delta}$, respectively.  Jacobi identities and the Eisenstein relations $\uri_0(\xi_1,\xi_k)=0$ reduce the remaining part to $9$ terms involving four generators, $5$ involving six generators and $3$ involving eight generators.  This gives \eqref{eq:cusp-relation-even}.  The second relation initially gives $43$ terms involving at least four generators. With the same reductions this shortens to $10$ terms involving four generators, $8$ involving six generators, $1$ involving eight generators and $1$ involving ten generators.  Every term contains $\xi_1$.  Both identities were then verified exactly in all possible positive depths $1\leq r\leq12$.  The correction terms are given in the \hyperref[sec:weight-twelve-corrections]{Appendix}. This proves \cref{thm:G}(ii).
\end{proof}

The full picture is not yet understood.  In particular, we do not know how to construct correction terms as in \cref{prop:cusp-relation-12} for suitable elements $\mathcal C_{f,g}$ in general and thereby obtain the explicit cusp-form relations predicted by the dimension conjecture \eqref{eq:HU}.

\appendix
\raggedbottom

\section*{Appendix}
\label{sec:weight-twelve-corrections}

We give here the correction terms in \cref{prop:cusp-relation-12}.  To shorten the formulas, recall that $\partial_1=\operatorname{ad}_{\uri}(\xi_1)=\uri_0(\xi_1,-)$. Thus $\partial_1^r$ denotes the $r$-fold iteration of $\uri_0(\xi_1,-)$, and each application adds one occurrence of $\xi_1$. In the formulas below, $Q_d^\pm$ denotes the sum of the terms containing exactly $d$ of the generators $\xi_k$.

For the even relation, write $Q^+=Q^+_4+Q^+_6+Q^+_8$.  The three summands are
\begin{align*}
 Q^+_4={}&
 \frac{17}{36}\uri_0\bigl(\uri_1(\xi_1,\xi_1),
                             \uri_1(\xi_1,\xi_5)\bigr)
 -\frac{4}{45}\uri_0\bigl(\uri_1(\xi_1,\xi_1),
                         \uri_1(\xi_3,\xi_3)\bigr)\\
 &+\frac{13}{30}\uri_0\bigl(\uri_1(\xi_1,\xi_1),
                              \uri_3(\xi_1,\xi_1)\bigr)
 +\frac{59}{448}\partial_1^2\bigl(\uri_2(\xi_1,\xi_5)\bigr)
 -\frac{21}{160}\partial_1
     \bigl(\uri_0(\xi_3,\uri_2(\xi_1,\xi_3))\bigr)\\
 &-\frac{1}{576}\partial_1
     \bigl(\uri_1(\xi_1,\uri_1(\xi_1,\xi_5))\bigr)
 +\frac{47}{2880}\partial_1
     \bigl(\uri_1(\xi_1,\uri_1(\xi_3,\xi_3))\bigr)\\
 &+\frac{7}{240}\partial_1
     \bigl(\uri_1(\xi_1,\uri_3(\xi_1,\xi_1))\bigr)
 -\frac{1}{12}\partial_1
     \bigl(\uri_3(\xi_1,\uri_1(\xi_1,\xi_1))\bigr),
\end{align*}

\begin{align*}
 Q^+_6={}&
 -\frac{34027}{28800}\uri_0
   \bigl(\partial_1^2(\uri_1(\xi_1,\xi_1)),
         \uri_1(\xi_1,\xi_3)\bigr)
 +\frac{28379}{28800}\uri_0
   \bigl(\partial_1^2(\uri_1(\xi_1,\xi_3)),
         \uri_1(\xi_1,\xi_1)\bigr)\\
 &-\frac{26111}{14400}\uri_0
   \bigl(\partial_1(\uri_1(\xi_1,\xi_1)),
         \partial_1(\uri_1(\xi_1,\xi_3))\bigr)\\
 &-\frac{247}{1600}\partial_1^4\bigl(\uri_2(\xi_1,\xi_3)\bigr)
 -\frac{1849}{57600}\partial_1^3
   \bigl(\uri_1(\xi_1,\uri_1(\xi_1,\xi_3))\bigr),
\end{align*}

\begin{align*}
 Q^+_8={}&
 \frac{133319}{483840}\uri_0
   \bigl(\partial_1^4(\uri_1(\xi_1,\xi_1)),
         \uri_1(\xi_1,\xi_1)\bigr)\\
 &+\frac{132509}{241920}\uri_0
   \bigl(\partial_1^3(\uri_1(\xi_1,\xi_1)),
         \partial_1(\uri_1(\xi_1,\xi_1))\bigr)
 -\frac{4489}{241920}\partial_1^5
   \bigl(\uri_1(\xi_1,\uri_1(\xi_1,\xi_1))\bigr).
\end{align*}

For the odd relation, write $Q^-=Q^-_4+Q^-_6+Q^-_8+Q^-_{10}$.  Then
\begin{align*}
 Q^-_4={}&
 -\frac{5347}{540}\partial_1^2\bigl(\uri_1(\xi_1,\xi_7)\bigr)
 +\frac{1051}{120}\partial_1^2\bigl(\uri_1(\xi_3,\xi_5)\bigr)
 -\frac{34099}{1260}\partial_1^2\bigl(\uri_3(\xi_1,\xi_3)\bigr)\\
 &
 -\frac{455}{162}\partial_1
   \bigl(\uri_0(\xi_3,\uri_1(\xi_3,\xi_3))\bigr)
 -\frac{11153}{1944}\partial_1
   \bigl(\uri_1(\xi_1,\uri_0(\xi_3,\xi_5))\bigr)
 +\frac{493}{36}\partial_1
   \bigl(\uri_1(\xi_1,\uri_2(\xi_1,\xi_3))\bigr)\\
 &+\frac{27695}{1008}\partial_1
   \bigl(\uri_2(\xi_1,\uri_1(\xi_1,\xi_3))\bigr)
 -\frac{805}{36}\uri_1
   \bigl(\xi_1,\partial_1(\uri_2(\xi_1,\xi_3))\bigr)\\
 &+\frac{25}{144}\uri_1
   \bigl(\xi_1,\uri_1(\xi_1,\uri_1(\xi_1,\xi_3))\bigr)
 -\frac{475}{48}\uri_2
   \bigl(\xi_1,\partial_1(\uri_1(\xi_1,\xi_3))\bigr),
\end{align*}

\begin{align*}
 Q^-_6={}&
 \frac{138881}{9720}\partial_1^4\bigl(\uri_1(\xi_1,\xi_5)\bigr)
 -\frac{54851}{9720}\partial_1^4\bigl(\uri_1(\xi_3,\xi_3)\bigr)\\
 &-\frac{1045969}{68040}\partial_1^4\bigl(\uri_3(\xi_1,\xi_1)\bigr)
 +\frac{175583}{9072}\partial_1^2
   \bigl(\uri_1(\xi_1,
     \uri_1(\xi_1,\uri_1(\xi_1,\xi_1)))\bigr)\\
 &+\frac{82543}{3888}\partial_1^2
   \bigl(\uri_2(\xi_1,
     \partial_1(\uri_1(\xi_1,\xi_1)))\bigr)
 -\frac{2425}{108}\partial_1
   \bigl(\uri_1(\xi_1,
     \partial_1(\uri_1(\xi_1,\uri_1(\xi_1,\xi_1))))\bigr)\\
 &+\frac{52325}{7776}\uri_1
   \bigl(\xi_1,
     \partial_1^2(\uri_1(\xi_1,\uri_1(\xi_1,\xi_1)))\bigr)
 -\frac{28175}{7776}\uri_1
   \bigl(\xi_1,\uri_1(\xi_1,
     \partial_1^2(\uri_1(\xi_1,\xi_1)))\bigr),
\end{align*}

\begin{align*}
 Q^-_8={}&-\frac{8954621}{1360800}\partial_1^6
   \bigl(\uri_1(\xi_1,\xi_3)\bigr),\qquad
 Q^-_{10}=\frac{5991473}{8164800}\partial_1^8
   \bigl(\uri_1(\xi_1,\xi_1)\bigr).
\end{align*}

\medskip
\noindent\textbf{Use of AI tools.}
ChatGPT~5.6 and Claude Fable~5 were used for language editing, for implementing the operators in SageMath, and for developing the strategy of the swap-invariance proof.  Exact computations with this implementation produced the correction terms in the Appendix.  The author checked all arguments and computations and takes full responsibility for the paper.

\begingroup
\raggedright

\endgroup
\end{document}